\documentclass[11pt]{article}
\usepackage{amsmath,amssymb,amsthm}
\hfuzz=\maxdimen
\tolerance=10000
\hbadness=10000

 \textwidth=17cm
 \textheight=22.5cm
 \textwidth 5.81in
 \textheight 8.9in
 \topmargin = -0.3cm
 \oddsidemargin=0.3in
 \evensidemargin =0.3in

\newtheorem{thm}{Theorem}[section]

\newtheorem{lem}[thm]{Lemma}

\newtheorem{conj}[thm]{Conjecture}

\theoremstyle{definition}
\newtheorem{exam$s$-union ple}[thm]{Example}

\begin{document}
\date{}

\title{Suboptimal $s$-union familes and  $s$-union antichains for vector spaces\footnote{This research was supported by  National Natural Science Foundation of China (12171028) and Beijing Natural
Science Foundation (1222013).}}

\author{
 {\small Yunjing  Shan,}  {\small Junling  Zhou}\\
{\small School of Mathematics and Statistics}\\ {\small Beijing Jiaotong University}\\
  {\small Beijing  100044, China}\\
 {\small jlzhou@bjtu.edu.cn}\\
}

\maketitle

\begin{abstract}
  Let $V$ be an $n$-dimensional vector space over the finite field $\mathbb{F}_{q}$, and let $\mathcal{L}(V)=\bigcup_{0\leq k\leq n}\left[V\atop k\right]$ be the set of all subspaces of $V$. A family of subspaces $\mathcal{F}\subseteq \mathcal{L}(V)$ is $s$-union if ${\rm dim}$$(F+F')\leq s$ holds for all $F$, $F'\in\mathcal{F}$. A family $\mathcal{F}\subseteq \mathcal{L}(V)$ is an antichain if $F\nleq F'$ holds for any two distinct $F, F'\in \mathcal{F}$. The optimal $s$-union families in $\mathcal{L}(V)$ have been determined by Frankl and Tokushige in $2013$. The upper bound of cardinalities of $s$-union $(s<n)$ antichains in $\mathcal{L}(V)$ has been established by Frankl recently, while the structures of optimal ones have not been displayed. The present paper determines all suboptimal $s$-union families for vector spaces and then investigates $s$-union antichains. For $s=n$ or $s=2d<n$, we determine all optimal and suboptimal $s$-union antichains completely. For $s=2d+1<n$, we prove that an optimal antichain is either $\left[V\atop d\right]$ or contained in $\left[V\atop d\right]\bigcup \left[V\atop d+1\right]$ which satisfies an equality related with shadows.
\end{abstract}

{\bf  Key words}\  \ \  $s$-union \ \  antichain \ \  cross-intersecting \ \  shadow \ \ vector space \ \

\section{Introduction}

Let $X$ be an $n$-element set and let $\tbinom{X}{k}$ denote the set of all $k$-element subsets of $X$. For the power set of $X$, we use the notation $2^{X}$. We say that a family of subsets $\mathcal{F}\subseteq 2^{X}$ is {\it$s$-union}~if~$|F \cup F'|\leq s$ holds for all $F$, $F'\in\mathcal{F}$. A family $\mathcal{F}$ is called {\it $t$-intersecting} if for all $F$, $F'\in\mathcal{F}$, we have $|F\cap F'|\geq t$. Since $\mathcal{F}\subseteq 2^{X}$ is $s$-union if and only if $\{X\setminus F:F\in \mathcal{F}\}$ is an $(n-s)$-intersecting family, the two concepts are essentially the same.

Let $\mathcal{F}\subseteq 2^{X}$ be $s$-union. An $s$-union family is said to be {\it optimal} if it has the largest possible cardinality. It is obvious that $\mathcal{F}=2^{X}$ is the optimal $s$-union family if $s=n$. For $s=n-1$, there are many optimal $s$-union families achieving the maximum cardinality $2^{n-1}$ {{\rm\cite{ekr set}}}. For $s\leq n-2$, Katona {{\rm\cite{katona set}}} showed that $|\mathcal{F}| \leq \sum_{i=0}^d \tbinom{n}{i}$ if $s=2d$, and $|\mathcal{F}| \leq \sum_{i=0}^d \tbinom{n}{i}+\tbinom{n-1}{d}$ if $s=2d+1$. The optimal $s$-union families are proved to be isomorphic to the Katona familes $\mathcal{K}(n,s)$ defined as follows. For $s=2d$, let

\begin{equation*}
\mathcal{K}(n,2d)=\{F\subseteq X:|F|\leq d\}.
\end{equation*}
For $s=2d+1$, let
\begin{equation*}
\mathcal{K}(n,2d+1)=\{F\subseteq X:|F|\leq d\}\bigcup \{F\subseteq X:|F|=d+1, y\in F\},
\end{equation*}
where $y$ is a fixed element of $X$. In 2017, Frankl {{\rm\cite{union}}} investigated the suboptimal $s$-union families $(s<n)$, meaning that they have the largest possible cardinality under the condition that they are not contained in any of the optimal $s$-union families. Frankl established the following theorem.

\begin{thm}{\rm(\cite[Theorem 2]{union})}\label{set}
Let $X$ be an $n$-element set and $2\leq s<n$. Suppose that $\mathcal{F}\subseteq 2^{X}$ is $s$-union and $\mathcal{F}\nsubseteq \mathcal{K}(n,s)$. Then the following hold.
\begin{itemize}
\item[\rm(i)] When $s=2d,$
\begin{equation*}
|\mathcal{F}| \leq \sum_{i=0}^d \tbinom{n}{i}-\tbinom{n-d-1}{d}+1.
\end{equation*}
Moreover for $s\leq n-2$, equality holds if and only if
\begin{equation*}
\mathcal{F}=(\{F\subseteq X:|F|\leq d\}\setminus\{F\in \tbinom{X}{d}:F\cap D =\emptyset\})\bigcup \{D\},
\end{equation*}
where $D$ is a fixed $(d+1)$-subset of $X$.
\item[\rm(ii)] When $s=2d+1,$
\begin{equation*}
|\mathcal{F}| \leq \sum_{i=0}^d \tbinom{n}{i}+\tbinom{n-1}{d}-\tbinom{n-d-2}{d}+1.
\end{equation*}
Moreover for $s\leq n-2$, equality holds if and only if
\begin{equation*}
\mathcal{F}=\{F\subseteq X:|F|\leq d\}\bigcup\{F\in \tbinom{X}{d+1}:y\in F,F\cap D \neq \emptyset\}\bigcup\{D\},
\end{equation*}
where $D\in \tbinom{X}{d+1}$, $y \in X$ are fixed with $y\notin D$ or
\begin{equation*}
\mathcal{F}=\{F\subseteq X:|F|\leq 2\}\bigcup \{F\in \tbinom{X}{3}:|F\cap D|\geq 2\},
\end{equation*}
where $D$ is a fixed $3$-subset of $X$ if $s=5$.
\end{itemize}
\end{thm}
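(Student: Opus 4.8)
The plan is to reduce to down-sets, analyse the top levels of $\mathcal{F}$ through extremal theorems for intersecting families, and then read the equality cases off from the corresponding uniqueness statements. First I would note that we may assume $\mathcal{F}$ is a down-set: its downward closure $\mathcal{F}^{\downarrow}$ has $|\mathcal{F}^{\downarrow}|\geq|\mathcal{F}|$, is still $s$-union because $|F'\cup G|\leq|F\cup G|$ whenever $F'\subseteq F$, and still satisfies $\mathcal{F}^{\downarrow}\nsubseteq\mathcal{K}(n,s)$ since $\mathcal{K}(n,s)$ is itself a down-set. Write $\mathcal{F}=\bigcup_k\mathcal{F}_k$ with $\mathcal{F}_k=\mathcal{F}\cap\tbinom Xk$, and set $m=\max\{k:\mathcal{F}_k\neq\emptyset\}$; then $d+1\leq m\leq s$ because $\mathcal{F}\nsubseteq\mathcal{K}(n,s)$. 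The $s$-union condition forces the top levels to be highly intersecting: any $F,F'\in\mathcal{F}_k$ satisfy $|F\cap F'|\geq 2k-s$, so $\mathcal{F}_{d+1}$ is intersecting when $s=2d+1$ and $2$-intersecting when $s=2d$, with higher levels tighter still and nested by shadows, $\partial\mathcal{F}_{k+1}\subseteq\mathcal{F}_k$.

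For $s=2d+1$ the key level is $d+1$. If $m=d+1$, then $\mathcal{F}_{d+1}$ is an intersecting family of $(d+1)$-sets that is \emph{not} a star (otherwise $\mathcal{F}$ would sit inside a copy of $\mathcal{K}(n,2d+1)$), so the Hilton--Milner theorem gives $|\mathcal{F}_{d+1}|\leq\tbinom{n-1}{d}-\tbinom{n-d-2}{d}+1$; combined with $|\mathcal{F}_i|\leq\tbinom ni$ for $i\leq d$ this is exactly the stated bound, and equality forces $\mathcal{F}_i=\tbinom Xi$ for $i\leq d$ while $\mathcal{F}_{d+1}$ must be a Hilton--Milner extremal family --- which for $d+1\geq 4$ is unique and gives the first configuration of part (ii), and for $d+1=3$ (i.e.\ $s=5$) gives in addition the ``triangle'' configuration. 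If $m\geq d+2$, I would fix $D\in\mathcal{F}_m$: every $d$-set of $\mathcal{F}$ meets $D$ and every larger member meets $D$ in at least two points, so the levels $\geq d+1$ shrink to $O(n^{d-1})$ members, far fewer than the $\Theta(n^{d})$ that equality needs at level $d+1$, whence a direct count gives $|\mathcal{F}|<\tbinom{n-1}{d}+\sum_{i=0}^{d}\tbinom ni-\tbinom{n-d-2}{d}+1$.

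For $s=2d$ the key level is $d$. If $m=d+1$, every $d$-set of $\mathcal{F}$ meets every $D\in\mathcal{F}_{d+1}$, so $\mathcal{F}_d$ contains no $d$-subset of any $X\setminus D$; writing $\ell=|\mathcal{F}_{d+1}|$ and passing to complements, $\{X\setminus D:D\in\mathcal{F}_{d+1}\}$ is an $\ell$-element family of $(n-d-1)$-sets with pairwise unions of size $\leq n-2$, and a Kruskal--Katona estimate shows the family of all $d$-subsets of its members has size at least $\tbinom{n-d-1}{d}+\ell-1$. Hence $|\mathcal{F}_d|+|\mathcal{F}_{d+1}|\leq\tbinom nd-\tbinom{n-d-1}{d}+1$, and adding $|\mathcal{F}_i|\leq\tbinom ni$ for $i\leq d-1$ gives the bound; equality forces $\ell=1$, $\mathcal{F}_d$ to be all $d$-sets meeting the unique $D$, and the lower levels to be complete. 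The one place a second extremal shape can intrude is the low-uniformity case $s=4$ (the analogue of $s=5$ in part (ii)), where the Kruskal--Katona inequality is tight also for the ``$2$-star at level $3$'' and that configuration must be recorded too; for $m\geq d+2$ a more careful count --- using that a set of size $\geq d+2$ forces $d$-sets to meet it in at least two points, costing an extra $\Theta(n^{d-1})$ at level $d$ --- again yields a value strictly below the bound.

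The main obstacle I anticipate is the Kruskal--Katona step in the even case with its ``$+\,\ell-1$'' correction: one must prove that an $\ell$-element family of $(n-d-1)$-sets with all pairwise unions of size $\leq n-2$ has at least $\tbinom{n-d-1}{d}+\ell-1$ distinct $d$-subsets, and track equality for every $\ell$. I would first dispatch the two extreme configurations --- $\ell=2$ with overlap as large as possible, and $\ell=\tbinom{n-2}{d-1}$ corresponding to a $2$-star, where the inequality reduces to the Catalan-number estimate $\tbinom{2d}{d}/(d+1)\geq d$ --- and then interpolate via Kruskal--Katona, noting that the union constraint is essential already for $d=1$ (there it forces $\ell=1$). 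The remaining nuisances are bookkeeping: keeping the $m\geq d+2$ estimates valid down to the smallest admissible $n$, where the polynomial-in-$n$ heuristics become tight, and correctly isolating the low-uniformity exceptions $s=5$ and $s=4$, which are exactly the cases in which the relevant extremal problem for intersecting families has more than one optimum.
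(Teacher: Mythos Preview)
This theorem is not proved in the present paper: it is quoted from Frankl's 2017 article as background, and the paper's own contribution is the vector-space analogue, Theorem~\ref{space two}, proved in Section~3. So the natural comparison is with that proof.

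Your odd case aligns with the paper's argument for Theorem~\ref{space two}(ii): pass to a down-set, apply Hilton--Milner at level $d+1$, and bound the remaining levels. Where the paper is sharper is in dispatching $m\geq d+2$. Rather than your asymptotic $O(n^{d-1})$ count, it pairs layers via the inequality $|\mathcal{F}_i|+|\mathcal{F}_{s+1-i}|\leq\tbinom{n}{i}$ (Lemma~\ref{s,i} here, Proposition~1 in Frankl's set paper), which is \emph{strict} whenever $\mathcal{F}_{s+1-i}\neq\emptyset$ and $s\leq n-2$; one strict pairing already kills equality, with no separate estimate needed.

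Your even case diverges substantively. Neither this paper nor Frankl's original routes through the Kruskal--Katona ``$+\,\ell-1$'' shadow-growth statement you flag as the main obstacle. Instead the key tool is a cross-intersecting lemma (Lemma~\ref{cross-} here): if $\mathcal{A}\subseteq\tbinom{X}{d}$, $\mathcal{B}\subseteq\tbinom{X}{d+1}$ are cross-intersecting and $\mathcal{B}$ is $2$-intersecting, then $|\mathcal{A}|+|\mathcal{B}|\leq\tbinom{n}{d}-\tbinom{n-d-1}{d}+1$. The proof bounds $|\mathcal{B}|$ by the $2$-intersecting Erd\H{o}s--Ko--Rado theorem, bounds the forbidden set $\Gamma(\mathcal{B})$ from below using just two members of $\mathcal{B}$, and finishes with a numerical inequality. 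Your shadow-growth claim for general $\ell$ is never needed, and the equality case $|\mathcal{B}|=1$ falls out directly.

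A final remark on your $s=4$ observation. In the vector-space setting the $q$-factors in Lemma~\ref{cross-} make $|\mathcal{B}|\geq 2$ strictly suboptimal, so no second configuration appears. In the set setting your instinct is right: the $2$-star family $\{F:|F|\leq 1\}\cup\{F\in\tbinom{X}{2}:F\cap\{a,b\}\neq\emptyset\}\cup\{F\in\tbinom{X}{3}:\{a,b\}\subseteq F\}$ is $4$-union, not contained in $\mathcal{K}(n,4)$, and has size $4n-4$, equal to the bound. So the uniqueness clause in part~(i) as quoted here is incomplete for $d=2$; you should consult Frankl's original for the exact hypotheses and equality statement before relying on it.
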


The problems in extremal set theory have natural extensions to families of subspaces over a finite field. Throughout the paper we always let $V$ be an $n$-dimensional vector space over the finite field $\mathbb{F}_{q}$. Let $\left[V\atop k\right]$ denote the family of all $k$-dimensional subspaces of $V$. For $m\in \mathbb{R}, k\in \mathbb{Z}^{+}$, define the Gaussian binomial coefficient by
$$\left[m\atop k\right]_{q}:=\prod_{0\leq i<k}\dfrac{q^{m-i}-1}{q^{k-i}-1}.$$
Obviously, the size of $\left[V\atop k\right]$ is $\left[n\atop k\right]_{q}$. If $k$ and $q$ are fixed, then $\left[m\atop k\right]_{q}$ is a continuous function of $m$ which is positive and strictly increasing when $m\geq k$. If there is no ambiguity, the subscript $q$ can be omitted. Let
\begin{equation*}
\mathcal{L}(V)=\bigcup_{0\leq k\leq n}\left[V\atop k\right].
\end{equation*}

For any two subspaces $A$, $B\in \mathcal{L}(V)$, let $A\leq B$ denote that $A$ is a subspace of $B$ and $A+B$ denote the linear span of $A$, $B$. Let $\mathcal{F}\subseteq \mathcal{L}(V)$ be a family of subspaces, we say that $\mathcal{F}$ is {\it$s$-union} if ${\rm dim}$$(F+F')\leq s$ holds for all $F$, $F'\in\mathcal{F}$. A family $\mathcal{F}\subseteq \mathcal{L}(V)$ is called {\it$t$-intersecting} if for all $F$, $F'\in\mathcal{F}$, we have ${\rm dim}$$(F\cap F')\geq t$. In particular, we say that $\mathcal{F}$ is an {\it intersecting family} if $t=1$. Endow $V$ with the usual inner product $\langle \cdot,\cdot\rangle$. For a subspace $U$ of $V$, let
\begin{equation*}
U^{\bot}=\{v\in V:\langle u, v\rangle=0~{\rm for~all}~u\in U\}
\end{equation*}
be the orthogonal complement of $U$. For a family $\mathcal{F}\subseteq \mathcal{L}(V)$, denote
\begin{equation*}
\mathcal{F}^{\bot}=\{F^{\bot}:F\in \mathcal{F}\}.
\end{equation*}
Since $(F+F')^{\bot}=F^{\bot}\cap F'^{\bot}$ holds for all $F, F'\in \mathcal{F}$, it is obvious that $\mathcal{F}$ is $s$-union if and only if $\mathcal{F}^{\bot}$ is $(n-s)$-intersecting.

Obviously, $\mathcal{F}=\mathcal{L}(V)$ is the optimal $s$-union family if $s=n$. For $s=n-1$, the optimal $s$-union families in $\mathcal{L}(V)$ were determined by Blokhuis et al. \cite{hm vector}. For $s\leq n-2$, we record the optimal families $\mathcal{K}[n,s]$ as follows, which were displayed in the form of $t$-intersecting families in~\cite{katona}. For $s=2d$, let
\begin{equation*}
\mathcal{K}[n,2d]=\{F\leq V:{\rm dim}(F)\leq d\}.
\end{equation*}
For $s=2d+1$, let
\begin{equation*}
\mathcal{K}[n,2d+1]=\{F\leq V:{\rm dim}(F)\leq d\}\bigcup \{F\in \left[V\atop d+1\right]:E\leq F\},
\end{equation*}
where $E$ is a fixed $1$-dimensional subspace of $V$.

\begin{thm}{\rm({\cite[Theorem 4]{katona}})}\label{space one}
Suppose that $\mathcal{F}\subseteq \mathcal{L}(V)$ is $s$-union, $2\leq s<n$. Then the following hold.
\begin{itemize}
\item[\rm(i)] When $s=2d,$
\begin{equation*}
|\mathcal{F}| \leq \sum_{i=0}^d \left[n\atop i\right].
\end{equation*}
Moreover, for $s\leq n-2$, equality holds if and only if $\mathcal{F}=\mathcal{K}[n,2d]$.
\item[\rm(ii)] When $s=2d+1,$
\begin{equation*}
|\mathcal{F}| \leq \sum_{i=0}^d \left[n\atop i\right]+\left[n-1\atop d\right].
\end{equation*}
Moreover, for $s\leq n-2$, equality holds if and only if $\mathcal{F}=\mathcal{K}[n,2d+1]$.
\end{itemize}

\end{thm}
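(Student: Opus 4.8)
The plan is to combine two reductions with a case analysis on the top nonempty level. First I would pass to the downward closure: since $G\le F$ and $G'\le F'$ force $\dim(G+G')\le\dim(F+F')$, the family $\mathcal{F}^{\downarrow}=\{G\le V:G\le F\ \text{for some}\ F\in\mathcal{F}\}$ is again $s$-union with $|\mathcal{F}^{\downarrow}|\ge|\mathcal{F}|$, equality holding iff $\mathcal{F}$ is already a downset; an extremal family, having the same cardinality as its downward closure, is a downset, so it suffices to treat downsets. For a downset $\mathcal{F}$ one has $|\mathcal{F}|=\sum_{k}|\mathcal{F}_{k}|$ with $\mathcal{F}_{k}=\mathcal{F}\cap\left[V\atop k\right]$, and the $s$-union condition just says that $\mathcal{F}_{j}$ and $\mathcal{F}_{k}$ are cross-$(j+k-s)$-intersecting, since $\dim(F\cap F')=j+k-\dim(F+F')\ge j+k-s$; in particular $\mathcal{F}_{k}$ is $(2k-s)$-intersecting in $\left[V\atop k\right]$. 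Let $k^{*}$ be the largest $k$ with $\mathcal{F}_{k}\neq\emptyset$; then $k^{*}\le s$.

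Next I would treat the boundary cases. If $k^{*}\le d$ then $\mathcal{F}\subseteq\{F:\dim F\le d\}$, so $|\mathcal{F}|\le\sum_{i=0}^{d}\left[n\atop i\right]$; when $s=2d$, equality forces $k^{*}=d$ with all lower levels full, i.e. $\mathcal{F}=\mathcal{K}[n,2d]$, and when $s=2d+1$ this is strictly below the target, so equality can only occur with $k^{*}=d+1$. In that subcase $\mathcal{F}_{d+1}$ is a $1$-intersecting family in $\left[V\atop d+1\right]$, so by the Erd\H{o}s--Ko--Rado theorem for subspaces $|\mathcal{F}_{d+1}|\le\left[n-1\atop d\right]$, hence $|\mathcal{F}|\le\sum_{i=0}^{d}\left[n\atop i\right]+\left[n-1\atop d\right]$; in the equality case all levels up to $d$ are full and $\mathcal{F}_{d+1}$ is an optimal $1$-intersecting family, and since $s\le n-2$ is precisely $n\ge 2(d+1)+1$ --- the threshold beyond which the optimal $1$-intersecting family in $\left[V\atop d+1\right]$ is the unique ``star'' $\{F:E\le F\}$ --- we get $\mathcal{F}=\mathcal{K}[n,2d+1]$.

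There remains the case $k^{*}=d+e$ with $e\ge1$ (for $s=2d$) or $e\ge2$ (for $s=2d+1$), where I would aim for a \emph{strict} inequality. Fix $A_{0}\in\mathcal{F}_{k^{*}}$. Every $W\in\mathcal{F}_{j}$ satisfies $\dim(W+A_{0})\le s$, i.e. $\dim(W\cap A_{0})\ge j+(d+e)-s$, so $|\mathcal{F}_{j}|\le N_{j}:=\#\{W\in\left[V\atop j\right]:\dim(W\cap A_{0})\ge j+(d+e)-s\}$ and thus $|\mathcal{F}|\le\sum_{j=0}^{k^{*}}N_{j}$. Putting $L_{j}=\left[n\atop j\right]-N_{j}\ge 0$ (so $L_{j}=0$ when $j+(d+e)-s\le0$) and using the identity $\left[n\atop j\right]=\sum_{i}\left[d+e\atop i\right]q^{(d+e-i)(j-i)}\left[n-d-e\atop j-i\right]$, the inequality $\sum_{j}N_{j}<\sum_{i=0}^{d}\left[n\atop i\right]$ (respectively $<\sum_{i=0}^{d}\left[n\atop i\right]+\left[n-1\atop d\right]$ when $s$ is odd) reduces to a concrete inequality between Gaussian binomials --- morally, that confining to $A_{0}$ destroys more subspaces at levels $d-e+1,\dots,d$ than it leaves above level $d$ --- which I would verify for all $q\ge2$ and $n\ge s+2$; if convenient, the top levels $\mathcal{F}_{d+1},\dots,\mathcal{F}_{d+e}$ can also be bounded by the Frankl--Wilson theorem, which only makes the comparison easier. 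The strictness forces $\mathcal{F}\subsetneq\mathcal{K}[n,s]$ here, and the three cases together give the theorem, with $\mathcal{K}[n,s]$ the unique optimum.

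The hard part will be the Gaussian-binomial inequality in the tight regime $n=s+2$: there the two sides have the same $q$-degree and the same leading coefficient, so it cannot be settled by a crude estimate and must be proved by a $q$-uniform bound on the lower-order terms --- equivalently, by careful level-by-level bookkeeping that balances the confinement losses at levels $d-e+1,\dots,d$ against the small surviving counts $N_{d+1},\dots,N_{d+e}$ --- and keeping every such inequality strict is exactly what yields uniqueness of $\mathcal{K}[n,s]$. A subsidiary delicate point is that the analysis of the subcase $s=2d+1,\ k^{*}=d+1$ hinges on the uniqueness of the optimal $1$-intersecting family in $\left[V\atop d+1\right]$, which is available precisely at the boundary dimension $n=2d+3$ permitted by $s\le n-2$.
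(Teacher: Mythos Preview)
This theorem is quoted from Frankl--Tokushige and is not proved in the paper. However, the paper's Lemma~2.5 (together with Theorems~2.1 and~2.3) contains exactly the ingredients of the Frankl--Tokushige argument, and the paper's proof of Theorem~1.3 in Section~3 shows how they assemble. That argument is quite different from yours, and cleaner: instead of anchoring everything to a single top subspace $A_0$, one pairs the levels $i$ and $s+1-i$. Writing $\mathcal{H}=\{F^\perp:F\in\mathcal{F}_{s+1-i}\}\subseteq\left[V\atop n+i-s-1\right]$, the $s$-union property makes $\mathcal{H}$ an $(n-s)$-intersecting family, and one checks $\bigtriangleup_i(\mathcal{H})\cap\mathcal{F}_i=\emptyset$. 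The shadow theorem for $t$-intersecting families (Theorem~2.3) gives $|\bigtriangleup_i(\mathcal{H})|\ge|\mathcal{H}|=|\mathcal{F}_{s+1-i}|$, strictly when $s\le n-2$ and $\mathcal{F}_{s+1-i}\neq\emptyset$. Hence $|\mathcal{F}_i|+|\mathcal{F}_{s+1-i}|\le\left[n\atop i\right]$ for each $0\le i\le d$; summing yields the bound for $s=2d$ immediately, and for $s=2d+1$ one adds the EKR bound $|\mathcal{F}_{d+1}|\le\left[n-1\atop d\right]$ for the remaining intersecting level. Uniqueness falls out because any nonempty level above $d$ (resp.\ $d+1$) forces a strict inequality in one of the pairings.

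Your route---bounding each $|\mathcal{F}_j|$ by the count $N_j$ of $j$-subspaces meeting a fixed $A_0$ in the required dimension---is a legitimate idea, but it replaces the soft shadow argument by a family of explicit Gaussian-binomial inequalities $\sum_j N_j<\sum_{i\le d}\left[n\atop i\right]$ (one for each $e\ge1$, and an odd analogue) that you have not actually established. That is a genuine gap: you acknowledge it is the hard part and sketch no mechanism to control the lower-order terms beyond ``careful bookkeeping''. Incidentally, your worry that at $n=s+2$ the two sides have the same $q$-degree and leading coefficient appears to be unfounded (for $s=2d$, $e=1$, $n=2d+2$ the dominant term of $N_{d+1}$ has $q$-degree $d^2+2d-3$ while the compensating loss $q^{d(d+1)}\left[d+1\atop 1\right]$ has $q$-degree $d^2+2d$), so the inequality may in fact be easier than you fear; but the point stands that the level-pairing proof avoids the computation altogether and delivers the uniqueness for free via the strictness in Lemma~2.5.
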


Excluding the optimal families provided in Theorem \ref{space one}, we will consider the suboptimal $s$-union families in $\mathcal{L}(V)$ and establish a vector space version of Theorem \ref{set}. Let us define the families $\mathcal{T}[n,s]$. For $s=2d$, define
\begin{equation*}
\mathcal{T}[n,2d]=(\{F\leq V:{\rm dim}(F)\leq d\}\setminus\{F\subseteq \left[V\atop d\right]:{\rm dim}(F\cap U)=0\})\bigcup\{U\},
\end{equation*}
where $U$ is a fixed $(d+1)$-dimensional subspace of $V$.

For $s=2d+1$, define
\begin{equation*}
\mathcal{T}[n,2d+1]=\{F\leq V:{\rm dim}(F)\leq d\}\bigcup \{F\in \left[V\atop d+1\right]:E\leq F, {\rm dim}(F\cap U)\geq 1\}\bigcup \left[E+U\atop d+1\right],
\end{equation*}
 where $E\in \left[V\atop 1\right]$ and $U\in \left[V\atop d+1\right]$ are fixed subspaces of $V$ with $E\nleq U$.

For $s=5$, define also
\begin{equation*}
\mathcal{J}[n,5]=\{F\leq V:{\rm dim}(F)\leq 2\}\bigcup \{F\in \left[V\atop 3\right]:{\rm dim}(F\cap D)\geq 2\},
\end{equation*}
where $D$ is a fixed $3$-dimensional subspace of $V$.

A main result of this paper in the next theorem shows that $\mathcal{T}[n,2d]$, $\mathcal{T}[n,2d+1]$ and $\mathcal{J}[n,5]$~(for~$s=5)$~are suboptimal $s$-union families.

\begin{thm}\label{space two}
Let $2\leq s<n$. Suppose that $\mathcal{F}\subseteq \mathcal{L}(V)$ is $s$-union and $\mathcal{F}\nsubseteq \mathcal{K}[n,s]$. Then the following hold.
\begin{itemize}
\item[\rm(i)] If $s=2d,$ then
\begin{equation}\label{eq8}
\begin{array}{rl}
|\mathcal{F}| \leq \sum_{i=0}^d\left[n\atop i\right]-q^{d(d+1)}\left[n-d-1\atop d\right]+1.
\end{array}
\end{equation}
Moreover for $s\leq n-2$, equality holds if and only if $\mathcal{F}=\mathcal{T}[n,2d].$

\item[\rm(ii)] If $s=2d+1,$ either $q\geq 3$ and $n\geq 2d+3, or~q = 2~ and~ n\geq2d+4$, then
\begin{equation}\label{eq9}
\begin{array}{rl}
|\mathcal{F}| \leq \sum_{i=0}^d \left[n\atop i\right]+\left[n-1\atop d\right]-q^{d(d+1)}\left[n-d-2\atop d\right]+q^{d+1}.
\end{array}
\end{equation}
Moreover for $s\leq n-2$, equality holds if and only if $\mathcal{F}=\mathcal{T}[n,2d+1]$ or alternatively $\mathcal{F}=\mathcal{J}[n,5]~if~s=5$.
\end{itemize}

\end{thm}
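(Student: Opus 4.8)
The plan is to follow the compression/shifting strategy that underlies the proof of Theorem~\ref{set}, translated into the $q$-analogue setting, but to do the counting by first reducing to a cross-intersecting problem on the two "top" levels of $\mathcal{F}$. Passing to orthogonal complements, an $s$-union family is an $(n-s)$-intersecting family, so by the duality it is enough to prove the dual statements about $t$-intersecting families with $t=n-s$; I would, however, work directly with the $s$-union formulation to keep the Katona families $\mathcal{K}[n,s]$ and $\mathcal{T}[n,s]$ visible. The first step is a standard structural reduction: if $\mathcal{F}$ is $s$-union and we let $d=\lfloor s/2\rfloor$, then every $F\in\mathcal{F}$ with $\dim F\ge d+1$ must have $\dim(F+F')\le s$ against all of $\mathcal{F}$; a short argument (as in the set case, using that $\dim(F+F')=\dim F+\dim F'-\dim(F\cap F')$) shows $\mathcal{F}$ can contain subspaces of dimension at most $d+1$ when $s=2d$, and at most $d+1$ when $s=2d+1$ as well, with the level-$(d+1)$ part heavily constrained. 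So write $\mathcal{F}=\mathcal{F}_{\le d-1}\cup\mathcal{F}_d\cup\mathcal{F}_{d+1}$ where $\mathcal{F}_i\subseteq\left[V\atop i\right]$; bounding $|\mathcal{F}_{\le d-1}|$ trivially by $\sum_{i\le d-1}\left[n\atop i\right]$, the problem becomes: maximize $|\mathcal{F}_d|+|\mathcal{F}_{d+1}|$ (in the $s=2d$ case) subject to $\mathcal{F}_d\cup\mathcal{F}_{d+1}$ being $2d$-union and $\mathcal{F}\nsubseteq\mathcal{K}[n,2d]$, which forces $\mathcal{F}_{d+1}\ne\emptyset$.

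The combinatorial heart is then a cross-intersecting estimate. Fix any $U\in\mathcal{F}_{d+1}$. Being $2d$-union forces every $F\in\mathcal{F}_d$ to satisfy $\dim(F+U)\le 2d$, i.e. $\dim(F\cap U)\ge 1$; and any other $U'\in\mathcal{F}_{d+1}$ satisfies $\dim(U+U')\le 2d$, i.e. $\dim(U\cap U')\ge 2$. The number of $F\in\left[V\atop d\right]$ with $\dim(F\cap U)=0$ is exactly $q^{d(d+1)}\left[n-d-1\atop d\right]$ (choose a complement-type subspace: this is the standard count of $d$-subspaces skew to a fixed $(d+1)$-subspace), so $|\mathcal{F}_d|\le\left[n\atop d\right]-q^{d(d+1)}\left[n-d-1\atop d\right]$, and we must still gain the "$+1$". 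The gain comes from showing $|\mathcal{F}_{d+1}|\le 1$ once we are at the extreme: if $|\mathcal{F}_{d+1}|\ge 2$, pick $U\ne U'$ with $\dim(U\cap U')\ge 2$; then both $F\mapsto\dim(F\cap U)\ge1$ and $\dim(F\cap U')\ge1$ constrain $\mathcal{F}_d$, and one shows the combined loss strictly exceeds $1$ relative to the single-$U$ bound, using strict monotonicity of the Gaussian binomials and that $n-s\ge 2$. This is exactly the place where the set-theoretic proof uses the Kruskal--Katona theorem; here I would instead use the exact formula for $|\{F\in\left[V\atop d\right]:F\cap W=0\}|$ for various $W$ and an inclusion--exclusion / convexity comparison, which I expect to be the main obstacle and the most delicate computation, since the two shadow-type conditions overlap in a way that must be bounded below by more than a single $d$-subspace's worth.

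For the odd case $s=2d+1$ the scheme is the same but with an extra layer: the level-$d$ part is no longer all of $\left[V\atop d\right]$ even in the optimal family, since $\mathcal{K}[n,2d+1]$ already restricts $\mathcal{F}_{d+1}$ to $\{F\in\left[V\atop d+1\right]:E\le F\}$ for a fixed line $E$. I would first show that $\mathcal{F}_{\le d}$ can be taken to be all of $\{F:\dim F\le d\}$ (otherwise trade up), so that the excess over $\mathcal{K}[n,2d+1]$ lives entirely in $\mathcal{F}_{d+1}$, and $\mathcal{F}_{d+1}$ is a $(2d+1)$-union, hence $(d-1)$-wise "$2$-intersecting" family in $\left[V\atop d+1\right]$ not all through a common line. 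The bound $\left[n-1\atop d\right]-q^{d(d+1)}\left[n-d-2\atop d\right]+q^{d+1}$ should then come from: counting $(d+1)$-subspaces through $E$ that also meet a fixed $U$ (with $E\not\le U$) in positive dimension, which subtracts $q^{d(d+1)}\left[n-d-2\atop d\right]$; adding back the $\left[E+U\atop d+1\right]=\left[d+2\atop d+1\right]-$type term, and here $\left[E+U\atop d+1\right]$ has size $\left[d+2\atop 1\right]=q^{d+1}+\cdots$, matching the claimed $+q^{d+1}$ after the lower-order terms cancel against the optimal family. The sporadic $\mathcal{J}[n,5]$ (the $d=2$, $s=5$ case) I would handle separately exactly as Frankl does for sets: when $d=2$ there is a second extremal configuration coming from $\{F\in\left[V\atop3\right]:\dim(F\cap D)\ge2\}$ for a fixed $3$-space $D$, whose size happens to equal the bound, and a case analysis on whether $\mathcal{F}_{d+1}$ is "line-type" or "$D$-type" completes the characterization. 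The hypotheses $q\ge3,\ n\ge2d+3$ or $q=2,\ n\ge2d+4$ are precisely what is needed for the relevant Gaussian-binomial inequalities (the strict comparisons in the cross-intersecting step) to go through, so I would isolate those inequalities as a lemma at the start and verify them by direct estimation of ratios of the form $\left[n-a\atop d\right]/\left[n-b\atop d\right]$.
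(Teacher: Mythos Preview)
Your overall decomposition is right, but there is a genuine gap in the ``structural reduction'' step. You assert that a short argument shows $\mathcal{F}$ can contain subspaces of dimension at most $d+1$, and then write $\mathcal{F}=\mathcal{F}_{\le d-1}\cup\mathcal{F}_d\cup\mathcal{F}_{d+1}$. This is not true in general: an $s$-union family may perfectly well contain subspaces of dimension $d+2,\dots,s$, and nothing in the $s$-union condition (or in passing to the downward closure) forces these out. The paper does \emph{not} reduce to $\dim\le d+1$; instead it proves a pairing inequality (its Lemma~\ref{s,i}) via shadows: if $\mathcal{F}$ is $s$-union then $|\mathcal{F}_i|+|\mathcal{F}_{s+1-i}|\le\left[n\atop i\right]$ for $0\le i\le\lfloor s/2\rfloor$, with strict inequality when $\mathcal{F}_{s+1-i}\ne\emptyset$ and $s\le n-2$. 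Summing these over $0\le i<d$ absorbs all the high-dimensional levels into the $\sum_{i<d}\left[n\atop i\right]$ term, and for the odd case it simultaneously shows that if any $G\in\mathcal{F}$ has $\dim G\ge d+2$ the total is \emph{strictly} below the claimed bound. Your trivial bound $|\mathcal{F}_{\le d-1}|\le\sum_{i\le d-1}\left[n\atop i\right]$ cannot do this job, because it leaves $\mathcal{F}_{d+2},\dots,\mathcal{F}_s$ unaccounted for.

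For the odd case $s=2d+1$, your plan to derive the bound on $\mathcal{F}_{d+1}$ by a direct count (subspaces through $E$ meeting $U$, then adding back $\left[E+U\atop d+1\right]$, then a line-type/$D$-type case analysis) is essentially a rederivation of the vector-space Hilton--Milner theorem. The paper simply \emph{cites} that theorem (its Theorem~\ref{HM}): once one knows $\mathcal{F}_{d+1}$ is intersecting with $\dim\bigl(\bigcap_{F\in\mathcal{F}_{d+1}}F\bigr)=0$ (which follows from $\mathcal{F}\nsubseteq\mathcal{K}[n,2d+1]$ after taking the downward closure), Hilton--Milner gives both the bound and the two extremal structures, and its hypotheses are exactly the conditions $q\ge3,\ n\ge 2d+3$ or $q=2,\ n\ge 2d+4$ in the statement. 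Also, a small slip: in the odd case $\mathcal{F}_{d+1}$ is only $1$-intersecting, not $2$-intersecting.

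For the even case $s=2d$, your cross-intersecting sketch is on target and matches the paper's Lemma~\ref{cross-}: with $\mathcal{A}=\mathcal{F}_d$, $\mathcal{B}=\mathcal{F}_{d+1}$, one has $\mathcal{A},\mathcal{B}$ cross-intersecting and $\mathcal{B}$ $2$-intersecting, and the paper proves $|\mathcal{A}|+|\mathcal{B}|\le\left[n\atop d\right]-q^{d(d+1)}\left[n-d-1\atop d\right]+1$. The ``combined loss strictly exceeds $1$'' step when $|\mathcal{B}|\ge 2$ is the delicate part; the paper handles it not by Kruskal--Katona or convexity but by an explicit lower bound on $\bigl|\{T\in\left[V\atop d\right]:\dim(T\cap B_1)=0\ \text{or}\ \dim(T\cap B_2)=0\}\bigr|$ together with the EKR bound $|\mathcal{B}|\le\left[n-2\atop d-1\right]$, and then a nontrivial Gaussian-binomial inequality. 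Your inclusion--exclusion idea points in the right direction, but you should expect this computation to require more than a convexity remark.
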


A family $\mathcal{F}\subseteq \mathcal{L}(V)$ is an {\it antichain} if $F\nleq F'$ holds for any two distinct $F, F'\in \mathcal{F}$. In 2021, Frankl \cite{sperner} obtained the upper bound of the cardinalities of $s$-union antichains for vector spaces.
\begin{thm}\label{antichain1}{\rm(\cite[Theorem 3.5]{sperner})}
If $\mathcal{F}\subseteq \mathcal{L}(V)$ is an $s$-union antichain with $2\leq s< n$, then
\begin{equation}\label{anti}
|\mathcal{F}|\leq \left[n\atop \lfloor\frac{s}{2}\rfloor\right].
\end{equation}
\end{thm}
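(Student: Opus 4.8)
The plan is to reduce the antichain bound to the $s$-union bound of Theorem \ref{space one}, using a compression/normalization argument on the dimensions appearing in $\mathcal{F}$. Write $d=\lfloor s/2\rfloor$, so that the target bound is $\left[n\atop d\right]$. First I would observe that an $s$-union family $\mathcal{F}$ can only contain subspaces of dimension at most $s$ (since $\dim(F+F)=\dim F\le s$), and in fact if $F,F'\in\mathcal{F}$ then $\dim F+\dim F'\le \dim(F+F')\le s$; in particular, at most one ``large'' layer can be heavily populated. The first key step is to show that we may assume $\mathcal{F}$ lives entirely in a single layer $\left[V\atop k\right]$ for some $k\le d$. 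For the antichain condition this is natural: if $\mathcal{F}$ meets layers $\left[V\atop k\right]$ and $\left[V\atop \ell\right]$ with $k<\ell$, I would push the lower layer up by replacing each $F\in\mathcal{F}\cap\left[V\atop k\right]$ with superspaces, or push the upper layer down by taking subspaces, in a way that preserves (or does not decrease) the size and preserves both the $s$-union and antichain properties — the standard ``bubble-down/bubble-up'' normalization for Sperner-type problems in the subspace lattice.

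The second step handles the single-layer case. If $\mathcal{F}\subseteq\left[V\atop k\right]$ with $k\le d$, then trivially $|\mathcal{F}|\le\left[n\atop k\right]\le\left[n\atop d\right]$, since $\left[n\atop k\right]$ is increasing in $k$ for $k\le n/2$ and here $k\le d\le s/2<n/2$. So the content is entirely in the normalization step. The more delicate case is $s=2d+1$ odd, where an $s$-union family is allowed one extra dimension: it can contain both $d$-subspaces and $(d{+}1)$-subspaces (as in $\mathcal{K}[n,2d+1]$), but a $(d{+}1)$-subspace forces all partner subspaces to have dimension $\le d$ with small intersection constraints. Here I would argue that if $\mathcal{F}$ is an antichain meeting layer $d{+}1$, then by the antichain property it cannot also contain any $d$-subspace below one of those $(d{+}1)$-subspaces, and the $s$-union condition between two $(d{+}1)$-subspaces $F,F'$ forces $\dim(F+F')\le 2d+1$, i.e. $\dim(F\cap F')\ge 1$ — so the $(d{+}1)$-layer part is an intersecting family in $\left[V\atop d+1\right]$, which by the Hilton--Milner / EKR bound for subspaces has size at most $\left[n-1\atop d\right]<\left[n\atop d\right]$; combined with a counting of the $d$-subspaces not used, one stays below $\left[n\atop d\right]$. (Alternatively, one can pass to orthogonal complements and use the known $(n-s)$-intersecting antichain results, but the direct compression is cleaner.)

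The step I expect to be the main obstacle is making the normalization to a single layer fully rigorous while \emph{simultaneously} respecting the antichain and $s$-union constraints — these two constraints pull in opposite directions (antichains dislike comparable pairs, $s$-union dislikes large spans), so a naive shift can destroy one while fixing the other. The resolution I would aim for is to process layers from the outside in: first note $\mathcal{F}$ occupies consecutive-ish layers in a narrow band around $s/2$; then, because of the $s$-union inequality $\dim F+\dim F'\le s$, show that two distinct ``low'' layers $k<k'\le d$ cannot both be nonempty unless $2k'\le s$, and in that regime the shift of the $k$-layer into the $k'$-layer (replacing each $k$-subspace by an arbitrarily chosen $k'$-superspace, chosen injectively — possible since each $k$-subspace has $\left[n-k\atop k'-k\right]\ge 1$ superspaces and a simple defect/Hall argument gives injectivity) preserves the antichain property within the now-single layer and cannot increase the span of any pair beyond $s$. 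Carrying out this Hall-type injectivity argument, and checking the parity-dependent edge case $s=2d+1$ separately as above, is where the real work lies; everything else is bookkeeping against the monotonicity of $\left[n\atop k\right]$.
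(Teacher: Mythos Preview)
Your broad outline --- compress $\mathcal{F}$ layer by layer toward level $d=\lfloor s/2\rfloor$ and then use monotonicity of $\left[n\atop k\right]$ --- is the same strategy the paper follows (Lemmas~\ref{4.1} and~\ref{4.2}). However, there is a genuine gap in your push-\emph{down} step, and it is exactly the step you flag as the main obstacle.

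First, a slip: you write $\dim F+\dim F'\le \dim(F+F')\le s$, but the dimension formula gives the reverse inequality $\dim(F+F')\le \dim F+\dim F'$. So the $s$-union condition does \emph{not} force $\dim F+\dim F'\le s$, and layers $\ell>d$ can certainly be populated (e.g.\ a $2$-intersecting family in layer $d{+}1$ when $s=2d$).

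The substantive gap is this: for a layer $\mathcal{F}_\ell$ with $\ell>d$, replacing it by its shadow $\bigtriangleup(\mathcal{F}_\ell)$ need not preserve size. A Hall/normalized-matching argument only gives $|\bigtriangleup(\mathcal{H})|\ge |\mathcal{H}|\cdot\left[n\atop \ell-1\right]\big/\left[n\atop \ell\right]$, which is useless once $\ell\le n/2$ --- and since $s<n$ we typically have $d+1\le n/2$. What actually makes the push-down work is that the $s$-union condition forces $\mathcal{F}_\ell$ to be $(2\ell-s)$-\emph{intersecting}, and then the Frankl--Tokushige shadow theorem for $t$-intersecting families (Theorem~\ref{shadow intersecting} in the paper) yields $|\bigtriangleup(\mathcal{F}_\ell)|\ge |\mathcal{F}_\ell|$. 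You never invoke this tool. In the odd case you correctly observe that the $(d{+}1)$-layer is intersecting, but you then reach for EKR/Hilton--Milner, which bounds $|\mathcal{F}_{d+1}|$ and does \emph{not} supply the needed inequality $|\bigtriangleup(\mathcal{F}_{d+1})|\ge |\mathcal{F}_{d+1}|$; your phrase ``combined with a counting of the $d$-subspaces not used'' is precisely where this shadow inequality is required and missing. You also do not address layers $\ell\ge d+2$ at all; the same intersecting-shadow theorem handles them.

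Finally, your push-\emph{up} via Hall is essentially correct (normalized matching gives $|\bigtriangledown(\mathcal{H})|\ge|\mathcal{H}|$ for $k\le d-1<n/2$), but it can destroy the $s$-union property if a high layer is still present: replacing $F$ by a $k'$-superspace $F'$ may give $\dim(F'+G)>s$ for $G$ in layer $\ell>d$. The paper avoids this by processing in the right order: first push all layers $\ell>d$ down using Theorem~\ref{shadow intersecting}, and only then push the low layers up.
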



The second main objective of the paper is to determine the structures of all optimal $s$-union antichains and then the suboptimal ones. Let us define the families $\mathcal{A}[n,s]$, $\mathcal{B}[n,s]$. For $2\leq s\leq n$, define
\begin{equation*}
\mathcal{A}[n,s]=(\left[V\atop \lceil \frac{s}{2}\rceil\right]\setminus\{F\in \left[V\atop \lceil \frac{s}{2}\rceil\right]:U\leq F\})\bigcup \{U\},
\end{equation*}
where $U$ is a fixed $(\lceil \frac{s}{2}\rceil-1)$-dimensional subspace of $V$; define
\begin{equation*}
\mathcal{B}[n,s]=(\left[V\atop \lfloor\frac{s}{2}\rfloor\right]\setminus\{F\in \left[V\atop \lfloor\frac{s}{2}\rfloor\right]:F\leq W\})\bigcup \{W\},
\end{equation*}
where $W$ is a fixed $(\lfloor\frac{s}{2}\rfloor+1)$-dimensional subspace of $V$.

It is obvious that an $s$-union antichain is just an antichain if~$s=n$. We determine the structures of all optimal and suboptimal antichains in this paper.




\begin{thm}\label{2d+1}
Let $\mathcal{F}\subseteq \mathcal{L}(V)$ be an antichain and $n>1$. Then the following hold.
\begin{itemize}
\item[\rm(i)]~
$|\mathcal{F}|\leq \left[n\atop \lfloor\frac{n}{2}\rfloor\right]$. Moreover, equality holds if and only if $\mathcal{F}=\left[V\atop \lfloor\frac{n}{2}\rfloor\right]$ or $\mathcal{F}=\left[V\atop \lceil\frac{n}{2}\rceil\right]$.
\item[\rm(ii)]~If $\mathcal{F}\nsubseteq \left[V\atop \lfloor\frac{n}{2}\rfloor\right]$ and $\mathcal{F}\nsubseteq \left[V\atop \lceil\frac{n}{2}\rceil\right]$, then
\begin{equation*}
|\mathcal{F}|\leq \left[n\atop \lfloor\frac{n}{2}\rfloor\right]-q\left[\lfloor\frac{n}{2}\rfloor\atop 1\right].
\end{equation*}
Moreover, equality holds if and only if $\mathcal{F}=\mathcal{A}[n,n]$ or $\mathcal{B}[n,n]$.
\end{itemize}
\end{thm}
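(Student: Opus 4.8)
Write $m=\lfloor n/2\rfloor$, and for $\mathcal{G}\subseteq\mathcal{L}(V)$ and $k\ge 0$ let $\partial^{k}\mathcal{G}=\{A\in\left[V\atop k\right]:A\le G\text{ for some }G\in\mathcal{G}\}$ and $\nabla^{k}\mathcal{G}=\{A\in\left[V\atop k\right]:A\ge G\text{ for some }G\in\mathcal{G}\}$. I use two classical facts about $\mathcal{L}(V)$: it has the LYM (normalized matching) property, so $\sum_{F\in\mathcal{F}}\left[n\atop\dim F\right]^{-1}\le 1$ for every antichain $\mathcal{F}$; and for $m+1\le n$ the bipartite inclusion graph between $\left[V\atop m\right]$ and $\left[V\atop m+1\right]$ is connected and biregular. \textbf{Part (i).} The LYM inequality and the strict unimodality of the Gaussian coefficients (maximum value $\left[n\atop m\right]$, attained exactly on the levels $\lfloor n/2\rfloor$ and $\lceil n/2\rceil$) at once give $|\mathcal{F}|\le\left[n\atop m\right]$, with equality only if $\mathcal{F}$ is supported on the middle level(s). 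If $n$ is even this is the claim. If $n$ is odd and both $\mathcal{F}_{m}:=\mathcal{F}\cap\left[V\atop m\right]$ and $\mathcal{F}_{m+1}:=\mathcal{F}\cap\left[V\atop m+1\right]$ are nonempty, the antichain condition gives $\nabla^{m+1}\mathcal{F}_{m}\cap\mathcal{F}_{m+1}=\emptyset$, so $|\nabla^{m+1}\mathcal{F}_{m}|\le\left[n\atop m+1\right]-|\mathcal{F}_{m+1}|=|\mathcal{F}_{m}|$ (the last equality since at LYM-equality $|\mathcal{F}_m|+|\mathcal{F}_{m+1}|=\left[n\atop m\right]=\left[n\atop m+1\right]$); but normalized matching forces $|\nabla^{m+1}\mathcal{F}_{m}|\ge|\mathcal{F}_{m}|$, hence equality in normalized matching, which for a connected biregular bipartite graph with parts of equal size forces $\mathcal{F}_{m}\in\{\emptyset,\left[V\atop m\right]\}$. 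Thus $\mathcal{F}$ is one of the two middle layers.

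\textbf{Part (ii), setup.} Let $\mathcal{F}$ be an antichain with $\mathcal{F}\nsubseteq\left[V\atop\lfloor n/2\rfloor\right]$ and $\mathcal{F}\nsubseteq\left[V\atop\lceil n/2\rceil\right]$. If $\mathcal{F}$ has no member on a middle level, then LYM applied with the second largest Gaussian coefficient gives $|\mathcal{F}|\le\left[n\atop m-1\right]$, which is below $\left[n\atop m\right]-q\left[m\atop 1\right]$ except for a few small $n$ checked directly. Otherwise, replacing $\mathcal{F}$ by $\mathcal{F}^{\bot}$ if necessary, we may assume $\mathcal{F}_{m}:=\mathcal{F}\cap\left[V\atop m\right]\ne\emptyset$; put $\mathcal{G}^{+}=\bigcup_{k>m}\bigl(\mathcal{F}\cap\left[V\atop k\right]\bigr)$ and $\mathcal{G}^{-}=\bigcup_{k<m}\bigl(\mathcal{F}\cap\left[V\atop k\right]\bigr)$, not both empty since $\mathcal{F}\nsubseteq\left[V\atop m\right]$. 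Because $\mathcal{F}$ is an antichain, $\mathcal{F}_{m}\subseteq\left[V\atop m\right]\setminus\bigl(\partial^{m}\mathcal{G}^{+}\cup\nabla^{m}\mathcal{G}^{-}\bigr)$, and moreover $\partial^{m}\mathcal{G}^{+}\cap\nabla^{m}\mathcal{G}^{-}=\emptyset$: an $A$ in the intersection would give $U\le A\le G$ with $U\in\mathcal{G}^{-}$ and $G\in\mathcal{G}^{+}$, contradicting $U\nleq G$. Hence
\begin{equation*}
|\mathcal{F}|=|\mathcal{F}_{m}|+|\mathcal{G}^{+}|+|\mathcal{G}^{-}|\le\left[n\atop m\right]-\bigl(|\partial^{m}\mathcal{G}^{+}|-|\mathcal{G}^{+}|\bigr)-\bigl(|\nabla^{m}\mathcal{G}^{-}|-|\mathcal{G}^{-}|\bigr),
\end{equation*}
and since $\mathcal{F}_{m}\ne\emptyset$ we get $\partial^{m}\mathcal{G}^{+}\subsetneq\left[V\atop m\right]$ whenever $\mathcal{G}^{+}\ne\emptyset$, and $\nabla^{m}\mathcal{G}^{-}\subsetneq\left[V\atop m\right]$ whenever $\mathcal{G}^{-}\ne\emptyset$.

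\textbf{Part (ii), shadow estimates and the obstacle.} The proof now reduces to two shadow estimates: (a) for a nonempty antichain $\mathcal{G}^{+}$ of subspaces of dimension $>m$ with $\partial^{m}\mathcal{G}^{+}\subsetneq\left[V\atop m\right]$ one has $|\partial^{m}\mathcal{G}^{+}|-|\mathcal{G}^{+}|\ge q\left[m\atop 1\right]$; and (b) the dual statement for $\nabla^{m}\mathcal{G}^{-}$. Granting these, if both $\mathcal{G}^{\pm}$ are nonempty then $|\mathcal{F}|\le\left[n\atop m\right]-2q\left[m\atop 1\right]$; if exactly one is nonempty then $|\mathcal{F}|\le\left[n\atop m\right]-q\left[m\atop 1\right]$, and equality forces equality in the relevant estimate together with $\mathcal{F}_{m}=\left[V\atop m\right]\setminus\partial^{m}\mathcal{G}^{+}$ (resp.\ $\left[V\atop m\right]\setminus\nabla^{m}\mathcal{G}^{-}$). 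Reading off the equality configurations gives exactly $\mathcal{F}=\mathcal{B}[n,n]$ (from $\mathcal{G}^{+}=\{W\}$, $\dim W=m+1$) or $\mathcal{F}=\mathcal{A}[n,n]$ (from $\mathcal{G}^{-}=\{U\}$, $\dim U=m-1$, when $n$ is even; and, when $n$ is odd, from the \emph{large} extremal family $\mathcal{G}^{+}=\left[V\atop m+1\right]\setminus\{W\in\left[V\atop m+1\right]:A\le W\}$ for a single $m$-space $A$, with $\mathcal{F}_{m}=\{A\}$). The main obstacle is precisely the pair (a), (b) \emph{with} its complete list of equality cases: in crude form these are a $q$-analogue of the Kruskal--Katona theorem for the subspace lattice, but here one needs a stability analysis near the middle levels, the delicate point being the large extremal family for (a) that appears when $n$ is odd and produces $\mathcal{A}[n,n]$. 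I expect this to be handled by a shifting/compression argument normalizing $\mathcal{G}^{\pm}$ to a canonical shape on which both sides can be computed and the equality cases read off; making the compression interact cleanly with the side condition $\partial^{m}\mathcal{G}^{+}\subsetneq\left[V\atop m\right]$ will be the fiddly part.
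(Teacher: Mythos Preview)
Your Part~(i) is fine and indeed cleaner than the paper's route: LYM plus the connectivity of the middle-level bipartite graph is the standard way to get strict Sperner for a normalized-matching lattice, whereas the paper derives~(i) as a by-product of its level-by-level replacement machinery.

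Part~(ii), however, has a genuine gap: you reduce to the shadow estimates (a) and (b) and then stop, calling them ``the obstacle'' and proposing shifting/compression. That \emph{is} the whole content of~(ii), and compression is unlikely to help here---there is no useful analogue of combinatorial shifting on $\mathcal{L}(V)$. Your estimate~(a) concerns the \emph{iterated} shadow $\partial^{m}$ of a multi-level antichain $\mathcal{G}^{+}$ together with a complete list of equality cases (including, as you correctly spotted, the large family giving $\mathcal{A}[n,n]$ when $n$ is odd); proving this directly with equality characterization is at least as hard as the theorem itself.

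The paper avoids this by never passing to the iterated shadow. It works one level at a time: starting from $\mathcal{F}$, repeatedly replace the top layer $\mathcal{F}_{l}$ by its one-step shadow $\triangle(\mathcal{F}_{l})$ (or the bottom layer by its shade), producing a new antichain of at least the same size. The quantitative control at each step comes from a single-level lemma (the paper's Lemma~\ref{shade}): if $k\ge\lceil n/2\rceil+1$ and $\emptyset\ne\mathcal{H}\subseteq\left[V\atop k\right]$ then $|\triangle(\mathcal{H})|-|\mathcal{H}|\ge q\left[k-1\atop 1\right]$, with equality iff $|\mathcal{H}|=1$. This is proved not by shifting but by invoking the Chowdhury--Patk\'os $q$-Kruskal--Katona theorem (Theorem~\ref{k-1-shadow}) and analysing the resulting function $f(x)=\left[x\atop k-1\right]-\left[x\atop k\right]$ on $[k,2k-2]$. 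Iterating this replacement, together with a one-shot application of the cross-Sperner bound (Theorem~\ref{cross-sperner}) when $n=2d+1$ and the family sits on levels $d,d+1$, gives both the inequality and the equality characterisation without ever needing your multi-level estimate~(a).

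So the missing idea is: don't try to bound $|\partial^{m}\mathcal{G}^{+}|-|\mathcal{G}^{+}|$ in one go; push down one level at a time using the $q$-Kruskal--Katona theorem as the engine, and use cross-Sperner for the two-level odd-$n$ endgame.
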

For $0\leq u \leq n$, let us define the {\it$u$-shadow} of $\mathcal{H}\subseteq \mathcal{L}(V)$ by
\begin{equation*}
\bigtriangleup_u(\mathcal{H})=\{G\in \left[V\atop u\right]:G\leq H~{\rm for~some}~H\in \mathcal{H}\}.
\end{equation*}
In particular, the $(u-1)$-shadow of the family $\mathcal{H}\subseteq \left[V\atop u\right]$ is denoted by  $\bigtriangleup(\mathcal{H})$ for convenience.

For $s$-union antichains with $s<n$, we establish another main theorem as follows.

\begin{thm}\label{antichain2}
Let $\mathcal{F}\subseteq \mathcal{L}(V)$ be an $s$-union antichain with $s< n$. Then the following hold.
\begin{itemize}
\item[\rm(i)]~
$|\mathcal{F}|\leq \left[n\atop \lfloor\frac{s}{2}\rfloor\right]$. Moreover, equality holds if and only if either $(a)$ or $(b)$ holds.\\
$(a)$ $\mathcal{F}=\left[V\atop \lfloor\frac{s}{2}\rfloor\right]$; \\
$(b)$ $\mathcal{F}=\mathcal{F}_{d}\bigcup \mathcal{F}_{d+1}$ for $s=2d+1$, where $\mathcal{F}_{d+1}\subseteq \left[V\atop d+1\right], \mathcal{F}_{d}=\left[V\atop d\right]\setminus \bigtriangleup(\mathcal{F}_{d+1})$ and $|\bigtriangleup(\mathcal{F}_{d+1})|=|\mathcal{F}_{d+1}|$.
\item[\rm(ii)]~Suppose $s=2d$ and $\mathcal{F}\nsubseteq \left[V\atop d\right]$. Then $(a)$ or $(b)$ holds.\\
$(a)$ If $d=1$, then
\begin{equation*}
|\mathcal{F}|\leq \left[n\atop 1\right]-q.
\end{equation*}
 Moreover, equality holds if and only if $\mathcal{F}=\mathcal{B}[n,2]$.\\
$(b)$~If $d\geq 2$, then
\begin{equation*}
|\mathcal{F}|\leq \left[n\atop d\right]-q\left[n-d\atop 1\right].
\end{equation*}
 Moreover, equality holds if and only if $\mathcal{F}=\mathcal{A}[n,2d]$.
\end{itemize}
\end{thm}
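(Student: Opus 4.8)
The inequality in part~(i) is precisely Theorem~\ref{antichain1}, so the real content is the three equality characterisations and the sharp bound of part~(ii); write $d=\lfloor s/2\rfloor$ throughout. Two basic ingredients set the stage. First, for distinct $F,F'$ in an antichain $\dim(F\cap F')\le\min(\dim F,\dim F')-1$, hence $\dim(F+F')\ge\max(\dim F,\dim F')+1$, so an $s$-union antichain with at least two members has all members of dimension at most $s-1$. Second, a ``forbidding principle'': a subspace $G$ of a prescribed dimension cannot belong to $\mathcal{F}$ if it lies below a member of larger dimension, lies above a member of smaller dimension, or spans more than $s$ dimensions with some member -- counting the subspaces of dimension $d$ forbidden in these ways produces the deficiencies $q\left[n-d\atop1\right]$, $q$, and $|\bigtriangleup(\mathcal{F}_{d+1})|$ that occur in the statement. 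The even case is treated first because it feeds the odd case.

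\textbf{Even case $s=2d$ (part (ii), and the even half of (i)).} Split $\mathcal{F}=\mathcal{F}^{-}\cup\mathcal{F}_{d}\cup\mathcal{F}^{+}$ according to $\dim F<d$, $=d$, $>d$; members of $\mathcal{F}^{+}$ have dimension in $[d+1,2d-1]$, and two members of $\mathcal{F}^{+}$ of common dimension $k$ satisfy $\dim(F\cap F')\ge2k-2d\ge2$, so each level of $\mathcal{F}^{+}$ is a heavily intersecting antichain. If $\mathcal{F}^{+}\ne\emptyset$, choose $F_{0}\in\mathcal{F}^{+}$ with $\dim F_{0}=k_{0}\ge d+1$; since $k_{0}+d>2d$, every $d$-subspace meeting $F_{0}$ trivially is forbidden, whence $|\mathcal{F}_{d}|\le\left[n\atop d\right]-q^{k_{0}d}\left[n-k_{0}\atop d\right]$. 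Bounding $|\mathcal{F}^{+}|$ by the relevant EKR/Hilton--Milner-type estimate for intersecting subspace families, and $|\mathcal{F}^{-}|$ by the subspace LYM inequality, one obtains $|\mathcal{F}^{-}|+|\mathcal{F}^{+}|<q^{k_{0}d}\left[n-k_{0}\atop d\right]-q\left[n-d\atop1\right]$ for $d\ge2$, hence $|\mathcal{F}|<\left[n\atop d\right]-q\left[n-d\atop1\right]$, so such $\mathcal{F}$ is not extremal; for $d=1$ any member of dimension $\ge2$ forces $|\mathcal{F}|=1$, so (ii)(a) holds vacuously ($\mathcal{B}[n,2]$ being degenerate). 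If $\mathcal{F}^{+}=\emptyset$ but $\mathcal{F}^{-}\ne\emptyset$, then $\mathcal{F}_{d}$ avoids the set $\bigtriangledown_{d}(\mathcal{F}^{-})$ of $d$-subspaces containing a member of $\mathcal{F}^{-}$, so $|\mathcal{F}|\le|\mathcal{F}^{-}|+\left[n\atop d\right]-|\bigtriangledown_{d}(\mathcal{F}^{-})|$, and one proves the ``shade'' inequality $|\bigtriangledown_{d}(\mathcal{G})|-|\mathcal{G}|\ge q\left[n-d\atop1\right]$ for every nonempty $\mathcal{G}\subseteq\bigcup_{j<d}\left[V\atop j\right]$, with equality exactly when $\mathcal{G}$ is a single $(d-1)$-subspace; this gives (ii)(b), with equality iff $\mathcal{F}=\mathcal{A}[n,2d]$. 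Since any $\mathcal{F}\nsubseteq\left[V\atop d\right]$ thus loses at least $q\left[n-d\atop1\right]>0$, the even case of (i) follows ($\left[V\atop d\right]$ is visibly a $2d$-union antichain of size $\left[n\atop d\right]$).

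\textbf{Odd case $s=2d+1$ (the odd half of (i)).} The subfamily $\mathcal{F}_{\le d}$ of members of dimension $\le d$ is a $2d$-union antichain, so $|\mathcal{F}_{\le d}|\le\left[n\atop d\right]$, with equality only for $\mathcal{F}_{\le d}=\left[V\atop d\right]$ (even case of (i)); but then a member of dimension $\ge d+1$ would contain a $d$-subspace already in $\mathcal{F}$, so $\mathcal{F}=\left[V\atop d\right]$. Hence if $|\mathcal{F}|=\left[n\atop d\right]$ and $\mathcal{F}\ne\left[V\atop d\right]$, then $\mathcal{F}_{\ge d+1}\ne\emptyset$; moreover $\mathcal{F}_{d+1}$ is intersecting (two $(d+1)$-subspaces spanning $\le2d+1$ meet in dimension $\ge1$). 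One rules out members of dimension $\le d-1$ or $\ge d+2$ at equality by the same forbidding/shade estimates (a member of dimension $\ge d+2$ forces a large set of $d$-subspaces into the complement of $\mathcal{F}_{d}$, and a member of dimension $\le d-1$ forbids $d$- and $(d+1)$-subspaces simultaneously), obtaining $\mathcal{F}=\mathcal{F}_{d}\cup\mathcal{F}_{d+1}$ with $\mathcal{F}_{d}\subseteq\left[V\atop d\right]\setminus\bigtriangleup(\mathcal{F}_{d+1})$. Then $\left[n\atop d\right]=|\mathcal{F}_{d}|+|\mathcal{F}_{d+1}|\le\left[n\atop d\right]-|\bigtriangleup(\mathcal{F}_{d+1})|+|\mathcal{F}_{d+1}|$ gives $|\mathcal{F}_{d+1}|\ge|\bigtriangleup(\mathcal{F}_{d+1})|$, while the converse bound $|\bigtriangleup(\mathcal{A})|\ge|\mathcal{A}|$ for intersecting $\mathcal{A}\subseteq\left[V\atop d+1\right]$ -- a subspace Kruskal--Katona-type lemma, proved by compression, together with the determination of its equality case -- yields $|\bigtriangleup(\mathcal{F}_{d+1})|=|\mathcal{F}_{d+1}|$, and then $\mathcal{F}_{d}=\left[V\atop d\right]\setminus\bigtriangleup(\mathcal{F}_{d+1})$; this is exactly form (i)(b). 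For the converse direction, one verifies directly that $\left[V\atop d\right]$ and every family of form (i)(b) are $(2d+1)$-union antichains of size $\left[n\atop d\right]$ -- the shadow-balance $|\bigtriangleup(\mathcal{F}_{d+1})|=|\mathcal{F}_{d+1}|$ confines $\mathcal{F}_{d+1}$ to a $(2d+1)$-dimensional subspace, which makes it intersecting and hence the whole family $s$-union.

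\textbf{Main obstacle.} The soft part is the shade estimate for members below the target level; the delicate point is controlling members \emph{above} the target level -- dimensions in $(d,2d-1]$ in the even case, $(d+1,2d]$ in the odd one -- at or near equality. Such a member forbids a large, highly structured set of $d$-subspaces while simultaneously forcing the remaining high-dimensional members into a heavily intersecting antichain; the argument must show the latter can never be dense enough to compensate for the former. Making this precise needs the sharp intersecting-family bounds (EKR/Hilton--Milner for subspaces) together with the shadow-stability lemma $|\bigtriangleup(\mathcal{A})|\ge|\mathcal{A}|$ for intersecting $(d+1)$-subspace families and its equality case, and some care in the degenerate regimes ($d=1$, or $n$ only slightly exceeding $s$, where a $d$-subspace disjoint from a given large subspace need not exist), which is presumably why part~(ii) splits off $d=1$.
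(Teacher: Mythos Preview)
Your plan for the case $\mathcal{F}^{+}=\emptyset$, $\mathcal{F}^{-}\ne\emptyset$ (the shade inequality $|\bigtriangledown_{d}(\mathcal{G})|-|\mathcal{G}|\ge q\left[n-d\atop1\right]$ with equality for a single $(d-1)$-space) is exactly the content of the paper's Lemma~\ref{4.1}; the paper proves it by iterated one-step shade replacement via Lemma~\ref{shade} rather than as a one-shot statement, but the substance is the same. Likewise, your odd-case endgame ($\mathcal{F}_{d}=\left[V\atop d\right]\setminus\bigtriangleup(\mathcal{F}_{d+1})$ and $|\bigtriangleup(\mathcal{F}_{d+1})|=|\mathcal{F}_{d+1}|$ via the shadow bound for intersecting families) matches the paper's use of Theorem~\ref{shadow intersecting} in \eqref{80}.

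The real gap is the even case with $\mathcal{F}^{+}\ne\emptyset$. You assert
\[
|\mathcal{F}^{-}|+|\mathcal{F}^{+}|<q^{k_{0}d}\left[n-k_{0}\atop d\right]-q\left[n-d\atop1\right]
\]
citing ``EKR/Hilton--Milner for $\mathcal{F}^{+}$, LYM for $\mathcal{F}^{-}$'', but neither tool delivers this as stated: LYM bounds weighted sums, not $|\mathcal{F}^{-}|$; and applying EKR level-by-level to $\mathcal{F}^{+}$ ignores the cross-level antichain constraints and does not obviously sum to something small enough (nor is it clear which $k_{0}$ to fix when $\mathcal{F}^{+}$ spans several levels). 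The paper does \emph{not} attempt this direct estimate. Instead it iteratively replaces the top layer $\mathcal{F}_{l}$ by its shadow $\bigtriangleup(\mathcal{F}_{l})$ (Lemma~\ref{4.2}), using Theorem~\ref{shadow intersecting} to show the size strictly increases at each step when $s=2d$, until the family sits in $\left[V\atop d\right]\cup\left[V\atop d+1\right]$; then it applies the cross-intersecting Lemma~\ref{cross-} (which needs that the $(d+1)$-level is $2$-intersecting) to get $|\mathcal{F}|\le\left[n\atop d\right]-q^{d(d+1)}\left[n-d-1\atop d\right]+1$, and finally checks numerically that this is strictly below $\left[n\atop d\right]-q\left[n-d\atop1\right]$. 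Hilton--Milner plays no role here. The same shadow-replacement reduction, with the strict inequality at every step above $d+1$, is also what rules out dimensions $\ge d+2$ at equality in the odd case---your ``same forbidding/shade estimates'' would have to reproduce this, and as written they do not.
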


 The main objective of the paper is to prove Theorems \ref{space two}, \ref{2d+1} and \ref{antichain2}. For Theorem \ref{antichain2}, we first consider the case $d=1$, $s=2<n$. It is obvious that if $d=1$, the optimal $2$-union antichain is $\left[V\atop 1\right]$. Let $\mathcal{F}$ be a suboptimal $2$-union antichain. We can easily find that if $|{\cal F}|> 1$ then any $i$-dimensional subspace with $i=0$ or $i\geq 3$ does not belong to $\mathcal{F}$ and that $|\mathcal{F}\bigcap\left[V\atop 2\right]|\leq1$. Thus $\mathcal{F}=\mathcal{B}[n,2]$ and  $|\mathcal{F}|=\left[n\atop 1\right]-\left[2\atop 1\right]+1=\left[n\atop 1\right]-q$.

\section{ Preliminaries}
In this section, we recall a number of basic theorems and establish several new lemmas in the vector spaces, which are essential for our proofs. Firstly, we introduce the celebrated Erd\H{o}s-Ko-Rado theorem and Hilton-Milner theorem for vector spaces.

\begin{thm}\label{EKR}{\rm(\cite[Theorem 1]{ekr vector},~\cite[Theorem 3]{ekrbc})}
Let $1\leq t\leq k$. Suppose $\mathcal{H}\subseteq \left[V\atop k\right]$ is a $t$-intersecting family. Then we have

\begin{equation*}
|\mathcal{H}|\!\leq\!
\begin{cases}
\left[n-t\atop k-t\right], &\text{if~$n\geq 2k$},\\[.3cm]
\left[2k-t\atop k\right], &\text{if~$2k-t<n\leq2k$}.
\end{cases}\
\end{equation*}
Moreover, equality holds if and only if  one of the following holds:
\begin{itemize}
\item[\rm(i)]~If $n>2k$, then $\mathcal{H}=\{H\in\left[V\atop k\right]:T\leq H\}$ for some $T\in\left[V\atop t\right]$.
\item[\rm(ii)]~If $2k-t<n<2k$, then $\mathcal{H}=\left[Y\atop k\right]$ for some $Y\in\left[V\atop 2k-t\right]$.
\item[\rm(iii)]~If $n=2k$, then $\mathcal{H}=\{H\in\left[V\atop k\right]:T\leq H\}$ for some $T\in\left[V\atop t\right]$ or $\mathcal{H}=\left[Y\atop k\right]$ for some $Y\in\left[V\atop 2k-t\right]$.
\end{itemize}
\end{thm}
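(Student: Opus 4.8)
The plan is to read the $t$-intersecting condition as a coclique (independent-set) condition in the Grassmann association scheme on $\left[V\atop k\right]$ and then apply the Delsarte--Hoffman bound. First I would set up the scheme: put $(A,B)$ in relation $R_i$ exactly when $\dim(A\cap B)=k-i$. Since $\dim(A\cap B)\ge 2k-n$ always, the relations present are $R_0,\dots,R_{\min(k,n-k)}$, and $\mathcal{H}$ is $t$-intersecting precisely when it is a coclique of the graph $\Gamma=\bigcup_{i\ge k-t+1}R_i$. This viewpoint also pins down the two extremal candidates: the \emph{star} $\{H\in\left[V\atop k\right]:T\le H\}$ for a fixed $T\in\left[V\atop t\right]$, of size $\left[n-t\atop k-t\right]$, and the \emph{ball} $\left[Y\atop k\right]$ for a fixed $Y\in\left[V\atop 2k-t\right]$, of size $\left[2k-t\atop k\right]$; comparing these two sizes shows the star wins for $n>2k$, the ball wins for $2k-t<n<2k$, and they tie at $n=2k$, which is exactly the case split in the statement.

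For the upper bound I would invoke the known spectral data of the Grassmann scheme. Its common eigenspaces $V_0,\dots,V_{\min(k,n-k)}$ have multiplicities $\left[n\atop j\right]-\left[n\atop j-1\right]$, and the eigenvalues of each $R_i$ are the $q$-Eberlein polynomials, available in closed form. The decisive step is to choose nonnegative weights $\omega_i$ on the forbidden relations $R_{k-t+1},\dots,R_k$ so that the weighted adjacency matrix $M=\sum_i\omega_i A_i$ attains its least eigenvalue off the all-ones eigenspace; the weighted ratio bound then gives $|\mathcal{H}|\le N\cdot\frac{-\lambda_{\min}}{d-\lambda_{\min}}$ with $N=\left[n\atop k\right]$, and for $n\ge 2k$ this telescopes to $\left[n-t\atop k-t\right]$. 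The small-$n$ regime $2k-t<n<2k$ I would reduce to the large-$n$ case by orthogonal complementation: since $\dim(F^{\bot}\cap G^{\bot})=n-2k+\dim(F\cap G)$, the family $\mathcal{H}$ is $t$-intersecting iff $\mathcal{H}^{\bot}\subseteq\left[V\atop n-k\right]$ is $(n-2k+t)$-intersecting, and now $n\ge 2(n-k)$, so the established bound applies and yields $\left[2k-t\atop k-t\right]=\left[2k-t\atop k\right]$ by the symmetry of the Gaussian coefficient.

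The main obstacle is the equality characterization, which the ratio bound alone does not deliver. The standard route is rigidity of tight cocliques: when $|\mathcal{H}|$ meets the bound, its characteristic vector must lie in the direct sum of the eigenspaces on which $\lambda_{\min}$ is attained, and for the chosen weights these are exactly the low-degree eigenspaces $V_0\oplus\cdots\oplus V_t$. I would then translate this spectral constraint into combinatorial structure — for $n>2k$ showing the support must be a single star $\{H:T\le H\}$, and for $2k-t<n<2k$ that it must be a ball $\left[Y\atop k\right]$ — typically by a local exchange argument or by the $q$-analogue of the Hilton--Milner stability statement forcing a near-extremal family onto one of these two shapes. At the self-dual point $n=2k$ the complementation symmetry interchanges the star and the ball, so both extremal families survive, giving the two alternatives in case (iii). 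Carrying out the spectral-to-structural translation cleanly, and separating the two competing optima near $n=2k$, is where the real work lies; the bound itself is comparatively routine once the scheme's eigenvalues are in hand.
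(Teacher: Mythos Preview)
The paper does not prove this theorem at all: Theorem~\ref{EKR} is quoted from the literature (Frankl--Wilson for the bound, Tanaka for the equality classification) and used as a black box in the proofs of Lemmas~\ref{cross-} and~\ref{4.2}. So there is no ``paper's own proof'' to compare against; you are proposing to prove something the authors simply import.

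That said, your outline is a recognizable and essentially correct route to the result, and indeed close in spirit to the cited sources: Frankl--Wilson's original argument uses inclusion matrices between $\left[V\atop t\right]$ and $\left[V\atop k\right]$, which is equivalent to projecting onto the low eigenspaces $V_0\oplus\cdots\oplus V_t$ of the Grassmann scheme, and Tanaka's uniqueness proof works precisely by characterizing the subsets whose characteristic vectors have this bounded ``width'' (support in few eigenspaces). Two points deserve care. First, your reduction of the range $2k-t<n<2k$ to the large-$n$ case via $\mathcal{H}\mapsto\mathcal{H}^{\bot}$ lands at parameters $(n,n-k,n-2k+t)$ with $n=2(n-k)+(2k-n)$, i.e.\ exactly $n'=2k'$ only when $n=2k$; for $n<2k$ you have $n'>2k'$, so the reduction is fine and does give the ball bound, but you should check that the equality characterization transports correctly (it does: the star for $\mathcal{H}^{\bot}$ becomes the ball for $\mathcal{H}$). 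Second, the step ``spectral constraint forces a star or a ball'' is the genuine content of the uniqueness and is not a one-line consequence of the ratio bound; invoking a Hilton--Milner-type stability here is circular in the context of this paper, since the paper's Theorem~\ref{HM} is itself a cited result that presupposes the EKR classification. If you want a self-contained argument, follow Tanaka's width/dual-width classification or the inclusion-matrix rank argument directly rather than appealing to stability.
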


\begin{thm}\label{HM}{\rm(\cite[Theorem 1.4]{hm vector})}
Suppose $k\geq2$, and either $q\geq 3$ and $n\geq 2k+1, or~q = 2 ~and~ n\geq2k+2$. Let $\mathcal{H}\subseteq \left[V\atop k\right]$ be an intersecting family with ${\rm dim}$$(\bigcap _{H\in \mathcal{H}}H)=0$. Then
\begin{equation*}
\begin{array}{rl}
|\mathcal{H}|\leq \left[n-1\atop k-1\right]-q^{k(k-1)}\left[n-k-1\atop k-1\right]+q^{k}.
\end{array}
\end{equation*}
Moreover, equality holds if and only if
\begin{equation*}
\mathcal{H}=\{H\in \left[V\atop k\right]:E\leq H, {\rm dim}(H\cap U)\geq 1\}\bigcup \left[E+U\atop k\right],
\end{equation*}
where $E$, $U$ are fixed $1$-dimensional, $k$-dimensional subspaces of~$V$~(respectively) with $E\nleq U$ or for$~k=3$,
\begin{equation*}
\mathcal{H}=\{F\in \left[V\atop 3\right]: {\rm dim}(F\cap D)\geq 2\},
\end{equation*}
where $D$ is a fixed $3$-dimensional subspace of $V$.
\end{thm}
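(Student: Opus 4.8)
The plan is to reduce Hilton--Milner to a single estimate on the ``non-star residue'' by means of an exact counting identity, and then to concentrate the real work there. \emph{The free subtraction.} Fix any $1$-dimensional subspace $E$, write $\mathcal H_E=\{H\in\mathcal H:E\le H\}$ and $\mathcal N=\{H\in\mathcal H:E\nleq H\}$, and suppose $\mathcal N\neq\emptyset$, say $N_0\in\mathcal N$. Because $\mathcal H$ is intersecting, every $H\in\mathcal H_E$ meets $N_0$ nontrivially, so no member of $\mathcal H_E$ is disjoint from $N_0$. Passing to the quotient $V/E$ of dimension $n-1$, one has $\dim\big((N_0+E)/E\big)=k$ (since $E\nleq N_0$) and $H\cap N_0=0$ if and only if $(H/E)\cap((N_0+E)/E)=0$. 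As the number of $(k-1)$-subspaces of an $(n-1)$-space disjoint from a fixed $k$-subspace equals $q^{k(k-1)}\left[n-k-1\atop k-1\right]$, exactly that many $k$-spaces through $E$ are forbidden, whence $|\mathcal H_E|\le\left[n-1\atop k-1\right]-q^{k(k-1)}\left[n-k-1\atop k-1\right]$.

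\emph{Choice of centre.} Next I would choose $E$ to maximise the degree $|\mathcal H_E|$. Since $\bigcap_{H\in\mathcal H}H=0$, no $1$-space is contained in every member, so $\mathcal N\neq\emptyset$ for this $E$ and the previous bound applies. Then $|\mathcal H|=|\mathcal H_E|+|\mathcal N|$, and the theorem follows once I show $|\mathcal N|\le q^k$, because $\left[k+1\atop 1\right]-\left[k\atop 1\right]=q^k$ is precisely the number of $k$-spaces in a $(k+1)$-space $E+U$ that avoid $E$. Thus everything is reduced to bounding the residue $\mathcal N$.

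\emph{The residue bound (main obstacle).} This last step is the heart of the matter. The families $\mathcal H_E$ and $\mathcal N$ are cross-intersecting, and $\mathcal N$ is itself intersecting; the goal is to prove that all members of $\mathcal N$ lie in a single $(k+1)$-dimensional subspace $E+U$. The obstruction is that the subspace lattice admits no clean shifting or compression, so one cannot first normalise $\mathcal H$ to a canonical shape. Instead I would argue directly from the maximality of $|\mathcal H_E|$: since $\mathcal H_E$ is then close to the full star through $E$, each $N\in\mathcal N$ is forced to meet almost every $k$-space through $E$, which pins $N$ down to a common axis and confines $\mathcal N$ to $E+U$. Quantifying this uses Theorem~\ref{EKR} together with a cross-intersecting refinement, and it is exactly here that the two regimes separate: for $q\ge 3,\,n\ge 2k+1$ the counting is comfortable, whereas for $q=2$ the naive estimate degrades and the stronger hypothesis $n\ge 2k+2$ is needed to close the gap.

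\emph{Equality.} For the characterisation, the free subtraction forces $\mathcal H_E$ to be exactly the $k$-spaces through $E$ meeting $N_0$, and the residue bound forces $\mathcal N=\left[E+U\atop k\right]\setminus\mathcal H_E$ with $U:=N_0$, which is the generic Hilton--Milner family. The exceptional family at $k=3$ emerges from the second extremal configuration in Theorem~\ref{EKR} (the case where an intersecting family concentrates in a low-dimensional $Y$ rather than through a point): here $\mathcal H$ collects inside a $3$-space $D$, and one checks separately that $\{F\in\left[V\atop 3\right]:\dim(F\cap D)\ge 2\}$ attains the same cardinality and that no further family does.
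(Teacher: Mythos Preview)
The paper does not prove this theorem at all: Theorem~\ref{HM} is quoted as \cite[Theorem~1.4]{hm vector} (Blokhuis, Brouwer, Chowdhury, Frankl, Mussche, Patk\'os, Sz\H{o}nyi) and used purely as a black box in the proof of Theorem~\ref{space two}. There is therefore no in-paper argument to compare your proposal against.

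On the merits of your sketch itself: the decomposition $|\mathcal H|=|\mathcal H_E|+|\mathcal N|$ and the ``free subtraction'' bound on $|\mathcal H_E|$ via a single $N_0\in\mathcal N$ are correct and standard. The real gap is precisely where you locate it, and your outline does not close it. Choosing $E$ of maximum degree gives no a~priori upper bound on $|\mathcal N|$; the assertion that ``$\mathcal H_E$ is close to the full star, so each $N\in\mathcal N$ is pinned to a common axis'' is a hope, not an argument, and the cross-intersecting condition between $\mathcal H_E$ and $\mathcal N$ alone does not force $\mathcal N$ into a single $(k{+}1)$-space. The actual proof in \cite{hm vector} proceeds through the covering number $\tau(\mathcal H)$, splitting into $\tau=2$ (where the structure is rigid and both extremal families appear) and $\tau\ge 3$ (where a separate, delicate Gaussian estimate shows $|\mathcal H|$ is strictly smaller), and it is exactly this case division that produces the $q=2$ versus $q\ge 3$ dichotomy and isolates the exceptional $k=3$ configuration. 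Your plan names the right difficulty but supplies none of the machinery that resolves it.
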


Shadow is an important notion in extremal set theory. We will make use of the following two vector space version of theorems on shadows.

\begin{thm}\label{shadow intersecting}{\rm(\cite[Theorem 3]{katona})}
Let $1\leq t\leq k \leq n$ and let $\mathcal{H}\subseteq \left[V\atop k\right]$ be t-intersecting. Then for $k-t\leq u \leq k$, we have
\begin{equation}\label{eq1}
\begin{array}{rl}
\frac{|\bigtriangleup_u(\mathcal{H})|}{|\mathcal{H}|}\geq  \frac{\left[2k-t\atop u\right]}{\left[2k-t\atop k\right]}.
\end{array}
\end{equation}
\end{thm}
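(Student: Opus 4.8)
\medskip

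\noindent\emph{Proof strategy.}
The natural reformulation is that, among all $t$-intersecting families $\mathcal{H}\subseteq\left[V\atop k\right]$, the ratio $|\bigtriangleup_u(\mathcal{H})|/|\mathcal{H}|$ is minimized by the ``complete'' family $\left[Y\atop k\right]$ on a fixed $(2k-t)$-dimensional subspace $Y\le V$ (whose $u$-shadow is $\left[Y\atop u\right]$, giving exactly the stated right-hand side). The content lies in the range $n\ge 2k-t$: if $n<2k-t$ then every $\mathcal{H}\subseteq\left[V\atop k\right]$ is automatically $(2k-n)$-intersecting with $2k-n>t$, and the bound follows from the normalized matching property of $\mathcal{L}(V)$, since $|\bigtriangleup_u(\mathcal{H})|/|\mathcal{H}|\ge\left[n\atop u\right]/\left[n\atop k\right]=\left[k\atop u\right]/\left[n-u\atop k-u\right]\ge\left[k\atop u\right]/\left[2k-t-u\atop k-u\right]=\left[2k-t\atop u\right]/\left[2k-t\atop k\right]$. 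Two further facts will be used repeatedly: the Gaussian symmetry $\left[2k-t\atop u\right]=\left[2k-t\atop 2k-t-u\right]$ and the identity $\bigtriangleup_u(\bigtriangleup_{u'}(\mathcal{H}))=\bigtriangleup_u(\mathcal{H})$ for $u\le u'\le k$.

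The engine would be a \emph{one-step} shadow lemma: for every $t$-intersecting $\mathcal{H}\subseteq\left[V\atop k\right]$ with $t\ge 1$,
\begin{equation*}
\frac{|\bigtriangleup(\mathcal{H})|}{|\mathcal{H}|}\ \ge\ \frac{\left[2k-t\atop k-1\right]}{\left[2k-t\atop k\right]}.
\end{equation*}
I would prove this by the shifting/compression technique adapted to the subspace lattice: fixing a complete flag in $V$, the associated compression operators preserve the $t$-intersecting property, keep $|\mathcal{H}|$ fixed and do not increase $|\bigtriangleup(\mathcal{H})|$, so it suffices to bound the $(k-1)$-shadow of a compressed $t$-intersecting family, whose rigid structure reduces this to a $q$-binomial estimate. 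The reason for isolating $u=k-1$ is that shadows telescope cleanly here: $\bigtriangleup(\mathcal{H})$ is $(t-2)$-intersecting, $\bigtriangleup_{k-2}(\mathcal{H})$ is $(t-4)$-intersecting, and so on, while $2(k-i)-(t-2i)=2k-t$ is \emph{independent of $i$}. Hence iterating the one-step lemma yields $|\bigtriangleup_{k-i}(\mathcal{H})|/|\mathcal{H}|\ge\left[2k-t\atop k-i\right]/\left[2k-t\atop k\right]$ for every $i$ for which the intermediate family is still intersecting, i.e. for all $u=k-i$ with $u\ge k-\lceil t/2\rceil$.

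The remaining range $k-t\le u<k-\lceil t/2\rceil$ I would handle through the \emph{endpoint} case $u=k-t$, where by Gaussian symmetry the asserted inequality reads simply $|\bigtriangleup_{k-t}(\mathcal{H})|\ge|\mathcal{H}|$. Again compression reduces this to a Hall-type matching condition for the containment bipartite graph between $\mathcal{H}$ and $\left[V\atop k-t\right]$; the structure of compressed $t$-intersecting families lets one verify Hall's condition on all subfamilies (each of which is again $t$-intersecting, so one can induct on $|\mathcal{H}|$) and thus produce an $\mathcal{H}$-saturating matching. For an intermediate $u$, set $u':=2k-t-u$, so $k-\lceil t/2\rceil<u'\le k$; then $\mathcal{H}':=\bigtriangleup_{u'}(\mathcal{H})$ is $(2k-t-2u)$-intersecting and $u'-(2k-t-2u)=u$, so the endpoint case applied to $\mathcal{H}'$ gives $|\bigtriangleup_u(\mathcal{H})|=|\bigtriangleup_u(\mathcal{H}')|\ge|\mathcal{H}'|=|\bigtriangleup_{u'}(\mathcal{H})|$, and combining with the telescoped bound for $u'$ (valid since $u'\ge k-\lceil t/2\rceil$) and the symmetry $\left[2k-t\atop u'\right]=\left[2k-t\atop u\right]$ finishes the proof.

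The main obstacle is the one-step shadow lemma (and secondarily the endpoint lemma): over $\mathbb{F}_{q}$ the compression calculus in the subspace lattice is considerably more delicate than coordinate shifting in the Boolean lattice, and there is no clean general vector-space Kruskal--Katona theorem to fall back on, so the shadow minimization for compressed $t$-intersecting Grassmann families has to be carried out by hand, with the $q$-binomial arithmetic tracked carefully enough that the estimate meets $\left[Y\atop k\right]$ exactly and the telescoping loses nothing.
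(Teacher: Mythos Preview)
The paper does not prove this theorem: it is stated as a cited result from Frankl--Tokushige \cite{katona} and used as a black box (in Lemma~\ref{s,i} and in the proof of Lemma~\ref{4.2}). There is therefore no proof in the paper to compare your proposal against.

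On the proposal itself: the telescoping architecture---a one-step shadow bound together with the invariance $2(k-i)-(t-2i)=2k-t$, and then reflection through $u'=2k-t-u$ to reach the lower half via the endpoint inequality $|\bigtriangleup_{k-t}(\mathcal{H})|\ge|\mathcal{H}|$---is the right skeleton and mirrors the standard set-system argument. Your treatment of the case $n<2k-t$ via normalized matching is correct, and the bookkeeping in the reflection step ($\mathcal{H}'=\bigtriangleup_{u'}(\mathcal{H})$ is $(2u'-2u)$-intersecting at level $u'$, with $u'-(2u'-2u)=u$) checks out.

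The gap is exactly where you flag it: you assert that flag-based compression operators on $\mathcal{L}(V)$ preserve $t$-intersection and do not increase $|\bigtriangleup(\mathcal{H})|$, but no shifting calculus for subspaces with these properties is available in a form that makes this step routine, and you have not supplied one. The sentence ``the associated compression operators \ldots do not increase $|\bigtriangleup(\mathcal{H})|$'' is an assumption, not an argument, and it is doing essentially all of the work. So as written this is a plan rather than a proof; the one-step shadow lemma for $t$-intersecting families in $\left[V\atop k\right]$ is precisely the substantive content of the result you are trying to establish, and it is proved in \cite{katona} by direct counting rather than by compression.
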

Note that for $k-t< u <k$, the $\text{RHS}$ of \eqref{eq1} is strictly greater than 1.

\begin{thm}\label{k-1-shadow}{\rm(\cite[Theorem 1.4]{shadow})}
Let $\mathcal{H}\subseteq \left[V\atop k\right]$ and let $m\geq k$ be the real number which satisfies $|\mathcal{H}|=\left[m\atop k\right]$. Then
\begin{equation*}
|\bigtriangleup(\mathcal{H})|\geq \left[m\atop k-1\right].
\end{equation*}
Moreover, equality holds if and only if $\mathcal{H}=\left[M\atop k\right]$ for some $M\in\left[V\atop m\right], m\in \mathbb{Z}^{+}$.
\end{thm}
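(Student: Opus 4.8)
The statement is the Lovász form of a $q$-analogue of the Kruskal--Katona shadow theorem, so the plan is to prove the equivalent density inequality
\[
\frac{|\bigtriangleup(\mathcal{H})|}{|\mathcal{H}|}\;\geq\;\frac{\left[m\atop k-1\right]}{\left[m\atop k\right]}\;=\;\frac{q^{k}-1}{q^{m-k+1}-1},
\]
where the last identity is immediate from the product definition of the Gaussian coefficient. I would argue by induction, taking $n$ as the main parameter (with $k$ carried along). The base case $k=1$ is trivial, since then $\bigtriangleup(\mathcal{H})=\{0\}$ whenever $\mathcal{H}\neq\emptyset$, so $|\bigtriangleup(\mathcal{H})|=1=\left[m\atop 0\right]$; the case $n=k$ is equally immediate because $\left[V\atop k\right]=\{V\}$ and the shadow is all of $\left[V\atop k-1\right]$, giving equality with $M=V$.

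For the inductive step I would use the geometric decomposition dual to the $q$-Pascal identity $\left[m\atop k\right]=\left[m-1\atop k-1\right]+q^{k}\left[m-1\atop k\right]$. Fix a $1$-dimensional subspace $P\in\left[V\atop 1\right]$ and split $\mathcal{H}$ into the \emph{link} $\mathcal{H}_{1}=\{H/P:P\leq H\in\mathcal{H}\}\subseteq\left[V/P\atop k-1\right]$, where the quotient map is a bijection, and the remainder $\mathcal{H}_{0}=\{H\in\mathcal{H}:P\nleq H\}$, which I project into the same $(n-1)$-dimensional quotient via $H\mapsto(H+P)/P\in\left[V/P\atop k\right]$. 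The decisive point is that this projection is exactly $q^{k}$-to-one, a multiplicity that is \emph{independent of the ambient dimension} $n$; this is what prevents $n$ from leaking into the bound, and it is the feature one must respect (it is the $q$-analogue of the fact that the shadow bound does not depend on the size of the ground set). Writing $|\mathcal{H}_{1}|=\left[m_{1}\atop k-1\right]$ and $|(H+P)/P:H\in\mathcal{H}_{0}|=\left[m_{0}\atop k\right]$ with $m_{0},m_{1}\leq n-1$, the induction hypothesis applied inside $V/P$ yields $|\bigtriangleup(\mathcal{H}_{1})|\geq\left[m_{1}\atop k-2\right]$ together with the corresponding bound for the projected family of $\mathcal{H}_{0}$.

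Next I would split the shadow $\bigtriangleup(\mathcal{H})$ according to whether a member $G$ contains $P$. The subspaces $G$ with $P\leq G$ are in bijection, via $G\mapsto G/P$, with the one-step shadow $\bigtriangleup(\mathcal{H}_{1})$ inside $V/P$, so they contribute exactly $|\bigtriangleup(\mathcal{H}_{1})|$; the subspaces $G$ with $P\nleq G$ project $q^{k-1}$-to-one into $\left[V/P\atop k-1\right]$ and their image dominates $\mathcal{H}_{1}$ together with the shadow of the projected $\mathcal{H}_{0}$. The heart of the argument, and the step I expect to be the main obstacle, is the purely analytic combination: one must show that the two contributions, assembled using $|\mathcal{H}|=|\mathcal{H}_{0}|+|\mathcal{H}_{1}|\leq q^{k}\left[m_{0}\atop k\right]+\left[m_{1}\atop k-1\right]$ and the $q$-Pascal recursion, force $|\bigtriangleup(\mathcal{H})|\geq\left[m\atop k-1\right]$. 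This requires an extremal/convexity lemma stating that, subject to the constraint relating $m,m_{0},m_{1}$, the minimum of the assembled lower bound is attained when $m_{0}=m_{1}=m-1$, i.e. a monotonicity property of $\left[x\atop k\right]$ in the real variable $x$. A dual formulation using a hyperplane $W$ and the trace map $H\mapsto H\cap W$ (which is $q^{\,n-k}$-to-one onto $\left[W\atop k-1\right]$) gives the same inequality and may be cleaner for the equality analysis.

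Finally, for the equality characterization I would trace back tightness through every estimate. Equality in the density bound forces equality in the inductive bounds for both $\mathcal{H}_{0}$ and $\mathcal{H}_{1}$, hence by induction both pieces are full Grassmannians $\left[M_{0}\atop k\right]$ and $\left[M_{1}\atop k-1\right]$ of subspaces of $V/P$; it also forces every projection fiber to be completely filled (all $q^{k}$, respectively $q^{k-1}$, preimages present) and the two shadow parts to overlap exactly as the recursion demands. Pulling these constraints back to $V$ I would show that $M_{0}$ and $M_{1}$ must be compatible, i.e. arise from a single subspace $M$ with $P\leq M$, and that the fibered structure forces $\mathcal{H}$ to contain \emph{every} $k$-subspace of $M$ and no other. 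This yields $\mathcal{H}=\left[M\atop k\right]$ for some $M\in\left[V\atop m\right]$ with $m\in\mathbb{Z}^{+}$, and conversely such families visibly attain equality, completing the proof.
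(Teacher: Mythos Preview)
The paper does not prove this theorem at all: it is quoted verbatim from Chowdhury and Patk\'os \cite{shadow} and used as a black box (in Lemma~\ref{shade}). So there is no ``paper's own proof'' to compare your proposal against. What you have written is an outline of a possible direct proof, and it should be judged on its own merits.

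On those merits, the outline has a structural gap. You assert that the projection $H\mapsto (H+P)/P$ on $\mathcal{H}_{0}$ is ``exactly $q^{k}$-to-one, a multiplicity that is independent of the ambient dimension''. That multiplicity statement is true for the map on the \emph{full} family $\{H\in\left[V\atop k\right]:P\nleq H\}$, but for an arbitrary subfamily $\mathcal{H}_{0}$ the fibres can have any size from $1$ to $q^{k}$. You tacitly acknowledge this by writing the inequality $|\mathcal{H}|\leq q^{k}\left[m_{0}\atop k\right]+\left[m_{1}\atop k-1\right]$, but then the ``analytic combination'' step you flag as the main obstacle is genuinely problematic: you now only have an \emph{upper} bound on $|\mathcal{H}|$ in terms of $m_{0},m_{1}$, while you need a \emph{lower} bound on $|\bigtriangleup(\mathcal{H})|$ in terms of $m$. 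The constraint linking $m$ to $m_{0},m_{1}$ points the wrong way, and the convexity lemma you allude to (minimum at $m_{0}=m_{1}=m-1$) does not follow without further input controlling how the fibres are filled. The actual argument in \cite{shadow} sidesteps this by an averaging/normalisation device rather than a raw link--deletion induction, precisely to avoid the uneven-fibre issue.

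The equality analysis inherits the same problem: tracing tightness back requires that every fibre of the projection be either empty or completely full, and your sketch does not establish this. If you want to salvage the inductive route, you would need an additional compression or shifting step (a $q$-analogue of the colex compression used in the set case) that regularises the fibres before applying induction; without it the scheme as written does not close.
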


The following result is an analog of {\rm\cite[Proposition 1]{union}} for finite sets.

\begin{lem}\label{s,i}
Suppose $\mathcal{F}\subseteq \mathcal{L}(V)$ is $s$-union and $2\leq s<n$. Let $\mathcal{F}_i=\mathcal{F}\cap \left[V\atop i\right]$ for $0\leq i\leq \lfloor\frac{s}{2}\rfloor$. Then
\begin{equation}\label{eq4}
\begin{array}{rl}
|\mathcal{F}_i|+|\mathcal{F}_{s+1-i}|\leq \left[n\atop i\right].
\end{array}
\end{equation}
Moreover, for $s\leq n-2$, equality holds if and only if $\mathcal{F}_{s+1-i}=\emptyset.$
\end{lem}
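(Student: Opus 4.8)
The plan is to exploit the cross-$s$-union condition linking $\mathcal F_i$ with $\mathcal F_{s+1-i}$: it forces $\mathcal F_i$ to avoid a large family $\mathcal N$ of $i$-subspaces, and $|\mathcal N|$ can in turn be bounded below by $|\mathcal F_{s+1-i}|$ via the Erd\H{o}s--Ko--Rado theorem for vector spaces (Theorem \ref{EKR}).

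First dispose of the easy cases. If $i=0$ then $\mathcal F_{s+1-i}=\mathcal F_{s+1}=\emptyset$ (an $s$-union family contains no subspace of dimension $s+1$) and \eqref{eq4} reads $|\mathcal F_0|\le 1$; and if $\mathcal F_{s+1-i}=\emptyset$ then \eqref{eq4} is the trivial bound $|\mathcal F_i|\le\left[n\atop i\right]$. So assume $1\le i\le\lfloor\frac s2\rfloor$ and $\mathcal F_{s+1-i}\ne\emptyset$, and write $\mathcal A=\mathcal F_i$, $\mathcal B=\mathcal F_{s+1-i}$, $k=s+1-i$ (so $i<k$). For $A\in\mathcal A$ and $G\in\mathcal B$ we have $\dim(A+G)\le s$, hence $\dim(A\cap G)=i+k-\dim(A+G)\ge 1$; applying this with $A,G$ both in $\mathcal B$ shows that $\mathcal B\subseteq\left[V\atop k\right]$ is $t$-intersecting with $t:=2k-s=s+2-2i$, where $2\le t\le k$ and $n>2k-t=s$. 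The crucial observation is that $\mathcal A$ is disjoint from
\begin{equation*}
\mathcal N:=\left\{A'\in\left[V\atop i\right]:\ \dim(A'\cap G)=0\ \text{for some}\ G\in\mathcal B\right\},
\end{equation*}
since every $A\in\mathcal A$ meets every member of $\mathcal B$ nontrivially, so $A\notin\mathcal N$. Hence $|\mathcal A|\le\left[n\atop i\right]-|\mathcal N|$, and \eqref{eq4} will follow once we show $|\mathcal N|\ge|\mathcal B|$ (strictly, when $s\le n-2$).

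Fix $G_0\in\mathcal B$. Then $\mathcal N$ contains every $i$-subspace meeting $G_0$ trivially, and there are exactly $q^{ik}\left[n-k\atop i\right]=:D_0$ of these (an $i$-subspace of $V$ disjoint from $G_0$ is an $i$-subspace of $V/G_0$ together with a lift), so $|\mathcal N|\ge D_0$. On the other hand, since $2\le t\le k$ and $n>2k-t$, Theorem \ref{EKR} applied to $\mathcal B$ gives $|\mathcal B|\le\left[n-t\atop k-t\right]=\left[n-s-2+2i\atop i-1\right]$ if $n\ge 2k$, and $|\mathcal B|\le\left[2k-t\atop k\right]=\left[s\atop i-1\right]$ if $s<n<2k$. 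In either range a direct comparison of Gaussian binomial coefficients — using $\left[a\atop b\right]<q^{b(a-b)}\prod_{j\ge 1}(1-q^{-j})^{-1}<q^{b(a-b)+2}$, $\left[n-k\atop i\right]>q^{i(n-k-i)}$ for $n\ge s+2$, and $i\le\lfloor\frac s2\rfloor$ — yields $|\mathcal B|\le D_0$, with $D_0/|\mathcal B|>q^{s-1}\ge q$ as soon as $n\ge s+2$. Therefore $|\mathcal A|+|\mathcal B|\le\left[n\atop i\right]$, and this is strict when $\mathcal F_{s+1-i}\ne\emptyset$ and $s\le n-2$; combined with the easy cases this proves \eqref{eq4}, and it shows that for $s\le n-2$ equality forces $\mathcal F_{s+1-i}=\emptyset$ (whence also $\mathcal F_i=\left[V\atop i\right]$) — the content of the ``only if'' direction — while the ``if'' direction is immediate since $\mathcal F_i=\left[V\atop i\right]$, $\mathcal F_{s+1-i}=\emptyset$ realizes equality.

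I expect the only delicate point to be the last inequality $|\mathcal B|\le D_0$: one has to verify that the $t$-intersecting EKR bound, with $t$ exactly $s+2-2i$, is dominated by the count $q^{ik}\left[n-k\atop i\right]$ of $i$-subspaces disjoint from a fixed $k$-subspace, treating the ranges $n\ge 2k$ and $s<n<2k$ separately and carrying enough slack to get strict inequality when $s\le n-2$. This Gaussian-binomial bookkeeping is the main obstacle; the disjointness of $\mathcal A$ and $\mathcal N$ and the enumeration of complements of $G_0$ are routine.
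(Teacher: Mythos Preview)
Your argument is correct and takes a genuinely different route from the paper's. The paper passes to orthogonal complements: it sets $\mathcal H=\{F^\perp:F\in\mathcal F_{s+1-i}\}\subseteq\left[V\atop n+i-s-1\right]$, argues that $\bigtriangleup_i(\mathcal H)$ is disjoint from $\mathcal F_i$, and then invokes the Frankl--Tokushige shadow theorem for intersecting families (Theorem~\ref{shadow intersecting}) with $u=i$, $t=n-s$, $k=n+i-s-1$ to get $|\bigtriangleup_i(\mathcal H)|\ge|\mathcal H|=|\mathcal F_{s+1-i}|$, strictly when $s\le n-2$. No numerical comparison of Gaussian coefficients is needed: the slack comes for free from the shadow ratio~\eqref{eq1}.

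You stay on the primal side throughout. In place of the shadow bound you lower-bound $|\mathcal N|$ by the count $D_0=q^{ik}\left[n-k\atop i\right]$ of $i$-spaces missing a single fixed $G_0\in\mathcal B$, and you upper-bound $|\mathcal B|$ via the $t$-intersecting Erd\H{o}s--Ko--Rado theorem (Theorem~\ref{EKR}) with $t=s+2-2i$, treating the ranges $n\ge 2k$ and $s<n<2k$ separately. The cost is the explicit Gaussian-binomial bookkeeping you flag at the end (which does go through: in both ranges the exponent gap is at least $s-1$ once $n\ge s+2$); the gain is that you use only Theorem~\ref{EKR} and avoid both Theorem~\ref{shadow intersecting} and the orthogonal-complement map, which over $\mathbb F_q$ needs some care because of isotropic subspaces. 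One small point worth making explicit: at $n=s+1$ your lower bound $\left[n-k\atop i\right]>q^{i(n-k-i)}$ degenerates to an equality, but since $k=s+1-i\ge 2$ one still has $D_0=q^{ik}\ge q^{(i-1)k+2}>\left[s\atop i-1\right]\ge|\mathcal B|$, so the non-strict inequality~\eqref{eq4} survives for $s=n-1$ as the lemma requires.
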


\begin{proof}~When $\mathcal{F}_{s+1-i}=\emptyset$, the theorem holds trivially. We suppose $\mathcal{F}_{s+1-i}\neq \emptyset$ in the following. For a fixed $i\leq \lfloor\frac{s}{2}\rfloor$, define the family
 \begin{equation*}
 \mathcal{H}=\{F^{\bot}:F\in \mathcal{F}_{s+1-i}\}\subseteq \left[V\atop n+i-s-1\right].
 \end{equation*}

We claim that $\bigtriangleup_i(\mathcal{H})\cap \mathcal{F}_{i}=\emptyset$. Suppose $F\in \bigtriangleup_i(\mathcal{H})\cap \mathcal{F}_{i}$. Then there exists $H\in \mathcal{H}$ such that $F\leq H$, $H^{\bot}\in \mathcal{F}_{s+1-i}$. So we have ${\rm dim}(F\cap H^{\bot})=0$, i.e.,$~{\rm dim}(F+H^{\bot})=i+(s+1-i)=s+1$, a contradiction to the $s$-union property of $\mathcal{F}$. By the claim, we have

\begin{equation}\label{40}
|\bigtriangleup_i(\mathcal{H})|+|\mathcal{F}_{i}|\leq \left[n\atop i\right].
\end{equation}

 Since $\mathcal{F}_{s+1-i}\subseteq\mathcal{F}$ is $s$-union, $\mathcal{H}$ is an $(n-s)$-intersecting family.
In Theorem\;\ref{shadow intersecting}, setting $u=i, t=n-s, k=n+i-s-1$, yields the following inequality for $s\leq n-1$:
 \begin{equation*}
 |\bigtriangleup_i(\mathcal{H})|\geq |\mathcal{H}|.
 \end{equation*}
When $s\leq n-2$, since $\mathcal{F}_{s+1-i}\neq \emptyset,$~i.e.,$~\mathcal{H}\neq \emptyset$, we have $|\bigtriangleup_i(\mathcal{H})|> |\mathcal{H}|$ by  \eqref{eq1}. Hence, we have
$|\mathcal{H}|+|\mathcal{F}_{i}|\leq \left[n\atop i\right]$ by \eqref{40}. It is clear that $|\mathcal{H}|=|\mathcal{F}_{s+1-i}|$. Therefore, the desired result follows.
\end{proof}

We introduce a counting formula for vector spaces, which is further interpreted by applying the $q$-analog of inclusion-exclusion principle in {\rm\cite{chen}}.





\begin{lem}\label{lemc21}
Let $Z$ be an $m$-dimensional subspace of the $n$-dimensional vector space $V$ over~$\mathbb{F}_{q}$. For a positive integer $l$ with $m+l\leq n$, let $x$ denote the number of
~$l$-dimensional subspaces~~$W$~of~~$V$~such that ${\rm dim(Z\cap W)}=0$. Then the following hold.
\begin{itemize}
\item[\rm(i)]~$x=q^{lm}\left[n-m\atop l\right]=\sum _{0\leq t\leq \min\{m,l\}}(-1)^{t}q^{\frac{t(t-1)}{2}}\left[m\atop t\right]\left[n-t\atop l-t\right]$.
\item[\rm(ii)]~$x\geq \left[n\atop l\right]-\left[m\atop 1\right]\left[n-1\atop l-1\right].$
\end{itemize}

\end{lem}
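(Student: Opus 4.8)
The plan is to establish part (i) first by a direct double-counting / $q$-analog inclusion–exclusion argument, and then deduce part (ii) as a corollary by truncating the alternating sum. For part (i), the first identity $x = q^{lm}\left[n-m\atop l\right]$ can be proven by choosing a complement: fix a subspace $Y$ with $Z\oplus Y = V$, so $\dim Y = n-m$. An $l$-dimensional subspace $W$ with $\dim(Z\cap W)=0$ is precisely the graph of a linear map from some $l$-dimensional subspace of $Y$ into $Z$; more carefully, the projection $V\to Y$ along $Z$ restricts to an isomorphism on $W$ (since $W\cap Z = 0$), so $W$ is determined by its image $W'\in\left[Y\atop l\right]$ together with a linear map $W'\to Z$, and there are $q^{lm}$ such maps. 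This gives $x = q^{lm}\left[n-m\atop l\right]$. For the second identity, I would instead count directly: the number of $l$-subspaces $W$ with $\dim(Z\cap W)\ge 1$... actually it is cleaner to apply the $q$-analog of inclusion–exclusion from \cite{chen} to the conditions "$W$ avoids $Z$," summing over the dimension $t$ of $Z\cap W$ with the standard Möbius-type coefficient $(-1)^t q^{\binom{t}{2}}$, and for each $t$ counting the $\left[m\atop t\right]$ choices of a $t$-subspace $T\le Z$ times the $\left[n-t\atop l-t\right]$ choices of an $l$-subspace containing $T$. The two formulas agree because this is exactly the $q$-binomial identity $q^{lm}\left[n-m\atop l\right] = \sum_t (-1)^t q^{\binom{t}{2}}\left[m\atop t\right]\left[n-t\atop l-t\right]$, which can also be verified independently.

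For part (ii), I would extract the first two terms ($t=0$ and $t=1$) of the alternating sum in (i):
\begin{equation*}
x = \left[n\atop l\right] - \left[m\atop 1\right]\left[n-1\atop l-1\right] + \sum_{2\le t\le \min\{m,l\}}(-1)^{t}q^{\binom{t}{2}}\left[m\atop t\right]\left[n-t\atop l-t\right].
\end{equation*}
It then suffices to show the tail $\sum_{t\ge 2}(-1)^t q^{\binom{t}{2}}\left[m\atop t\right]\left[n-t\atop l-t\right]$ is nonnegative. The standard way is to pair consecutive terms $t=2j$ and $t=2j+1$ and show each pair $q^{\binom{2j}{2}}\left[m\atop 2j\right]\left[n-2j\atop l-2j\right] - q^{\binom{2j+1}{2}}\left[m\atop 2j+1\right]\left[n-2j-1\atop l-2j-1\right] \ge 0$; this reduces, after cancelling common factors, to a comparison of the form $\left[m-2j\atop 1\right]$-type quotients against powers of $q$ and a single $q$-binomial ratio, and the inequality $\left[a\atop 1\right]\le q^{a-b}\left[b\atop 1\right]\cdot(\text{something})$ goes through because $q^{\binom{2j+1}{2}-\binom{2j}{2}} = q^{2j}$ dominates the growth of the $q$-binomial quotients (each ratio $\left[m-2j\atop 1\right]/\left[2j+1\atop 1\right]$ and $\left[n-2j\atop 1\right]/\left[l-2j\atop 1\right]$ is at most $q^{m-2j-1}\cdot$something and $q^{n-l}$ respectively, while the power of $q$ gained compensates). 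I expect the bookkeeping in this pairing estimate to be the only real obstacle; everything else is a standard $q$-analog computation, and one may alternatively invoke a known monotonicity lemma for truncated $q$-inclusion–exclusion sums if available in \cite{chen}.

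Alternatively, and perhaps more transparently, part (ii) follows from a purely combinatorial observation avoiding the tail estimate: $\left[n\atop l\right] - x$ counts the $l$-subspaces $W$ with $\dim(Z\cap W)\ge 1$, and every such $W$ contains at least one $1$-dimensional subspace of $Z$; hence by a union bound over the $\left[m\atop 1\right]$ one-dimensional subspaces of $Z$, each contained in $\left[n-1\atop l-1\right]$ subspaces of dimension $l$, we get $\left[n\atop l\right] - x \le \left[m\atop 1\right]\left[n-1\atop l-1\right]$, which is exactly (ii). I would present this cleaner argument as the main proof of (ii), using the alternating-sum truncation only as a remark. The one point to check is that the union bound is valid, i.e. that double-counting only weakens the inequality, which is immediate.
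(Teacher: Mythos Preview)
Your proposal is correct. For (i) you and the paper both ultimately defer to \cite{chen}, though you add a sketch of the graph-of-a-linear-map argument for the closed form $q^{lm}\left[n-m\atop l\right]$ whereas the paper simply cites.

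For (ii), your first approach---truncate the alternating sum at $t=1$ and show the tail from $t=2$ is nonnegative by comparing consecutive terms---is exactly the paper's route: it verifies
\[
q^{\binom{t}{2}}\left[m\atop t\right]\left[n-t\atop l-t\right]\ \geq\ q^{\binom{t+1}{2}}\left[m\atop t+1\right]\left[n-t-1\atop l-t-1\right]
\]
for each $t\geq 2$ via a short chain of estimates using $n\geq m+l$, so the alternating tail starting at $t=2$ is nonnegative. Your alternative union-bound argument, however, is genuinely different and strictly more elementary: the complementary count $\left[n\atop l\right]-x$ is the number of $l$-subspaces meeting $Z$ nontrivially, each such subspace contains at least one of the $\left[m\atop 1\right]$ lines of $Z$, and each line lies in $\left[n-1\atop l-1\right]$ many $l$-subspaces, so the union bound gives (ii) with no computation at all. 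The paper's term-by-term route has the minor advantage of making the strictness of (ii) for $\min\{m,l\}\geq 2$ visible (the $t=2$ term is positive), a fact tacitly used later in the proof of Lemma~\ref{cross-}; but your union bound recovers this too, since the overcount is strict whenever two distinct lines of $Z$ lie in a common $l$-subspace, i.e.\ whenever $m\geq 2$ and $l\geq 2$.
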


\begin{proof}~
$\rm(i)$ This is the result of Propositions 2.2 and 2.3 in {\rm\cite{chen}}.

$\rm(ii)$ Let $a=\min\{m,l\}$, if~$a=1$, then the equality in $\rm(ii)$ holds by $\rm(i)$.
If $a\geq 2$, we have
\begin{equation*}
x=\left[n\atop l\right]-\left[m\atop 1\right]\left[n-1\atop l-1\right]+\sum _{t=2}^a (-1)^{t}q^{\frac{t(t-1)}{2}}\left[m\atop t\right]\left[n-t\atop l-t\right].
\end{equation*}
It suffices to prove that $q^{\frac{t(t-1)}{2}}\left[m\atop t\right]\left[n-t\atop l-t\right]\geq q^{\frac{(t+1)t}{2}}\left[m\atop t+1\right]\left[n-t-1\atop l-t-1\right]$, where $a\geq t\geq2$.
Since $n\geq m+l\geq 2a$, we have
\begin{equation*}
\begin{array}{rl}
\frac{q^{n-t}-1}{q^{l-t}-1}\cdot \frac{q^{t+1}-1}{q^{m-t}-1}&> \frac{(q^{n-t}-1)\cdot (q^{t+1}-1)}{q^{l+m-2t}}\\[.3cm]
&= \frac{q^{n+1}-q^{n-t}-q^{t+1}+1}{q^{l+m-2t}}\\[.3cm]
&>\frac{q^{n+1}-q^{n-2}-q^{a+1}}{q^{l+m-2t}}\\[.3cm]
&>\frac{q^{n}}{q^{l+m-2t}}\\[.3cm]
&>q^{t},
\end{array}
\end{equation*}
which implies the desired result.
\end{proof}

Two families $\mathcal{A}$ and $\mathcal{B}$ in $\mathcal{L}(V)$ are said to be {\it cross-Sperner} if there exist no $A\in \mathcal{A}$ and $B \in \mathcal{B}$ with $A\leq B$ or $B\leq A$.  Wang and Zhang ${\cite{cross}}$ obtained the upper bound of sizes of a pair of cross-Sperner families of finite vector spaces.

\begin{thm}\label{cross-sperner}{\rm(\cite[Theorem 1.5]{cross})}
Let $a,b,t$ be positive integers with $a<b<n$. If $\mathcal{A}\subseteq \left[V\atop a\right]$ and $\mathcal{B}\subseteq \left[V\atop b\right]$ are cross-Sperner, then
\begin{equation*}
|\mathcal{A}|+|\mathcal{B}|\leq \max\{\left[n\atop b\right]-\left[n-a\atop b-a\right]+1,~\left[n\atop a\right]-\left[b\atop a\right]+1\}.
\end{equation*}
Moreover equality holds if and only if  one of the following holds:
 \begin{itemize}
\item[\rm(i)]~$\left[n\atop a\right]\leq\left[n\atop b\right]$ and $\mathcal{A}=\{A\}~for~some~A\in \left[V\atop a\right]$ and $\mathcal{B}=\{B\in \left[V\atop b\right]:A\nleq B\}$;
\item[\rm(ii)]~$\left[n\atop a\right]\geq\left[n\atop b\right]$ and $\mathcal{B}=\{B\}~for~some~B\in \left[V\atop b\right]$ and $\mathcal{A}=\{A\in \left[V\atop a\right]:A\nleq B\}$.
 \end{itemize}
\end{thm}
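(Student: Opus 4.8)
The plan is to play off the two one-sided restrictions that cross-Spernerness imposes against the $q$-Kruskal--Katona theorem (Theorem~\ref{k-1-shadow}). Since $a<b$, the hypothesis merely says that no $A\in\mathcal A$ is a subspace of any $B\in\mathcal B$; I assume $\mathcal A\neq\emptyset\neq\mathcal B$ (otherwise the bound is immediate, and in the extremal families of (i) and (ii) both parts are nonempty). Put $\mathcal N(\mathcal A)=\{B\in\left[V\atop b\right]:A\le B\text{ for some }A\in\mathcal A\}$ and let $\bigtriangleup_a(\mathcal B)$ be the $a$-shadow. Cross-Spernerness is precisely $\mathcal B\cap\mathcal N(\mathcal A)=\emptyset$, equivalently $\mathcal A\cap\bigtriangleup_a(\mathcal B)=\emptyset$, which gives
\[
|\mathcal A|+|\mathcal B|\le\left[n\atop b\right]-\bigl(|\mathcal N(\mathcal A)|-|\mathcal A|\bigr),\qquad |\mathcal A|+|\mathcal B|\le\left[n\atop a\right]-\bigl(|\bigtriangleup_a(\mathcal B)|-|\mathcal B|\bigr);
\]
picking one $A_0\in\mathcal A$ and one $B_0\in\mathcal B$ also records the a priori bounds $|\mathcal B|\le\left[n\atop b\right]-\left[n-a\atop b-a\right]$ and $|\mathcal A|\le\left[n\atop a\right]-\left[b\atop a\right]$.

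To estimate the two shadow defects I iterate Theorem~\ref{k-1-shadow}. For $\mathcal B$, applying it from level $b$ down to level $a$ gives $|\bigtriangleup_a(\mathcal B)|\ge\left[\mu\atop a\right]$, where $\mu\ge b$ is defined by $|\mathcal B|=\left[\mu\atop b\right]$. For $\mathcal N(\mathcal A)$ I first dualize by orthogonal complements: $\mathcal N(\mathcal A)^{\perp}$ is the $(n-b)$-shadow of $\mathcal A^{\perp}\subseteq\left[V\atop n-a\right]$ and has the same cardinality, so the iterated theorem gives $|\mathcal N(\mathcal A)|\ge\left[m\atop n-b\right]$, where $m\ge n-a$ is defined by $|\mathcal A|=\left[m\atop n-a\right]$. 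Substituting,
\[
|\mathcal A|+|\mathcal B|\le\left[n\atop b\right]-\Bigl(\left[m\atop n-b\right]-\left[m\atop n-a\right]\Bigr),\qquad |\mathcal A|+|\mathcal B|\le\left[n\atop a\right]-\Bigl(\left[\mu\atop a\right]-\left[\mu\atop b\right]\Bigr),
\]
and at $|\mathcal A|=1$ (i.e.\ $m=n-a$) the first right-hand side equals $\left[n\atop b\right]-\left[n-a\atop b-a\right]+1$, while at $|\mathcal B|=1$ (i.e.\ $\mu=b$) the second equals $\left[n\atop a\right]-\left[b\atop a\right]+1$ --- the two quantities in the claimed maximum. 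The argument is then a dichotomy. Whenever $\left[m\atop n-b\right]-\left[m\atop n-a\right]\ge\left[n-a\atop b-a\right]-1$ (which holds, with equality, at $|\mathcal A|=1$) the first bound already yields $|\mathcal A|+|\mathcal B|\le\left[n\atop b\right]-\left[n-a\atop b-a\right]+1$; dually, whenever $\left[\mu\atop a\right]-\left[\mu\atop b\right]\ge\left[b\atop a\right]-1$ the second bound yields $|\mathcal A|+|\mathcal B|\le\left[n\atop a\right]-\left[b\atop a\right]+1$. So it remains to show that these two conditions together cover all admissible pairs: when the first fails, $\mathcal A$ is so large that $|\mathcal N(\mathcal A)|\ge\left[m\atop n-b\right]$ leaves only a small part of $\left[V\atop b\right]$ for $\mathcal B$, and a comparison of the Gaussian binomial coefficients involved --- for which Lemma~\ref{lemc21}(i)--(ii) supplies the estimates --- shows that $|\mathcal B|=\left[n\atop b\right]-|\mathcal N(\mathcal A)|$ then lies in the range where the second condition holds.

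For the equality characterisation, suppose $|\mathcal A|+|\mathcal B|$ equals the maximum. Then whichever of the two displayed bounds is attained must be sharp at every step: this forces $\mathcal B=\left[V\atop b\right]\setminus\mathcal N(\mathcal A)$ (resp.\ $\mathcal A=\left[V\atop a\right]\setminus\bigtriangleup_a(\mathcal B)$) together with equality in the iterated Kruskal--Katona estimate, and by the equality clause of Theorem~\ref{k-1-shadow} the latter forces $\mathcal A^{\perp}$ (resp.\ $\mathcal B$) to equal $\left[M\atop\cdot\right]$ for some subspace $M$. Tracking the dimension down the shadow chain, and using the observation that $|\mathcal B|\ge2$ is incompatible with $|\mathcal A|=\left[n\atop a\right]-\left[b\atop a\right]$ (two distinct $b$-spaces cover between them at least $2\left[b\atop a\right]-\left[b-1\atop a\right]>\left[b\atop a\right]$ of the $a$-spaces, all of which $\mathcal A$ must then avoid), one concludes that $\mathcal B$ is a single $b$-space $B$ with $\mathcal A=\{A\in\left[V\atop a\right]:A\nleq B\}$, i.e.\ case~(ii), or symmetrically case~(i). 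Finally, $\left[n\atop a\right]\le\left[n\atop b\right]$ is equivalent to $\left[n-a\atop b-a\right]\ge\left[b\atop a\right]$ (by $\left[n\atop b\right]\left[b\atop a\right]=\left[n\atop a\right]\left[n-a\atop b-a\right]$), which lines up the two cases with the two terms of the maximum.

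The part I expect to be the real obstacle is closing the ``$\mathcal A$ large'' (equivalently ``$\mathcal B$ large'') range at the end of the second paragraph: there the Kruskal--Katona defect estimate is genuinely too weak, so one must exploit the hard constraint $\mathcal N(\mathcal A)\neq\left[V\atop b\right]$ quantitatively, which reduces to a somewhat delicate web of monotonicity and comparison inequalities among Gaussian binomial coefficients; moreover the borderline parameter cases --- e.g.\ $b=n-1$, where the ``small'' range for $\mathcal A$ collapses to $|\mathcal A|=1$ --- are likely to require separate treatment, and the equality analysis leans on a precise reading of the equality case of Theorem~\ref{k-1-shadow}.
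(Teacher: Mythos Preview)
This theorem is not proved in the paper; it is quoted from Wang--Zhang \cite{cross} and used as a black box (once, in the proof of Theorem~\ref{2d+1}). So there is no in-paper argument to compare your proposal against.

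On the merits of your sketch: the gap you flag in your final paragraph is genuine, not merely technical. The defect function $m\mapsto\left[m\atop n-b\right]-\left[m\atop n-a\right]$ need not be increasing on the admissible range of $m$ --- for instance, when $a$ and $b$ both exceed $n/2$ it is negative at $m=n$ --- so your dichotomy ``either the first condition holds or the second does'' requires a separate quantitative argument that Lemma~\ref{lemc21} does not by itself supply. You have in effect reduced the theorem to an arithmetic comparison among Gaussian binomials that is about as hard as the original statement; the same issue also blocks your equality analysis, since concluding $|\mathcal A|=1$ from tightness needs precisely the strict inequality $\left[m\atop n-b\right]-\left[m\atop n-a\right]>\left[n-a\atop n-b\right]-1$ for all admissible $m>n-a$. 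For reference, Wang and Zhang's proof avoids shadows entirely: they establish a general upper bound for independent sets in a bipartite graph in terms of the minimum degrees on each side, and then specialise to the containment graph between $\left[V\atop a\right]$ and $\left[V\atop b\right]$, which yields both the bound and the equality characterisation directly.
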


We say that two families $\mathcal{A}$ and $\mathcal{B}$ in $\mathcal{L}(V)$ are {\it cross-$t$-intersecting} if ${\rm dim}(A\cap B)\geq t$ for all $A\in \mathcal{A}, B\in \mathcal{B}$, where $t\geq 1$. In particular, $\mathcal{A}$ and $\mathcal{B}$ are said to be {\it cross-intersecting} if $t=1$. Wang and Zhang ${\cite{cross}}$ also obtained the upper bound of sizes of cross-$t$-intersecting families.

\begin{thm}\label{cross}{\rm(\cite[Theorem 1.4]{cross})}
Let $n\geq 4$, $a,b,t$ be positive integers with $a,b\geq 2, t<\min\{a,b\}, a+b<n+t$, and $\left[n\atop a\right]\leq \left[n\atop b\right]$. If $\mathcal{A}\subseteq \left[V\atop a\right]$ and $\mathcal{B}\subseteq \left[V\atop b\right]$ are cross-$t$-intersecting, then
\begin{equation}\label{74}
|\mathcal{A}|+|\mathcal{B}|\leq \left[n\atop b\right]- \sum _{i=0}^{t-1} q^{(a-i)(b-i)} \left[a\atop i\right]\left[n-a\atop b-i\right]+1.
\end{equation}
Moreover equality holds if and only if  one of the following holds:
 \begin{itemize}
\item[\rm(i)]~$\mathcal{A}=\{A\}~for~some~A\in \left[V\atop a\right]$ and $\mathcal{B}=\{B\in \left[V\atop b\right]:{\rm dim}(A\cap B)\geq t\}$;
\item[\rm(ii)]~$\left[n\atop a\right]=\left[n\atop b\right]$ and $\mathcal{B}=\{B\}~for~some~B\in \left[V\atop b\right]$ and $\mathcal{A}=\{A\in \left[V\atop a\right]:{\rm dim}(A\cap B)\geq t\}$.
 \end{itemize}

\end{thm}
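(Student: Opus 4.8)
The plan is to pass from the two-family inequality to a single covering inequality, and then to an extremal charging argument. Throughout I assume $\mathcal{A},\mathcal{B}\neq\emptyset$, since the cross-$t$-intersecting condition is vacuous otherwise and the configurations in (i),(ii) require both families to be genuinely present.

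First I would fix any $A_0\in\mathcal{A}$. Every $B\in\mathcal{B}$ satisfies $\dim(A_0\cap B)\ge t$, so $\mathcal{B}$ lies in the $t$-neighborhood $\{B\in\left[V\atop b\right]:\dim(A_0\cap B)\ge t\}$. The complementary bad set $N_{A_0}=\{B\in\left[V\atop b\right]:\dim(A_0\cap B)<t\}$ is counted by a fiber argument: an exact intersection $C=A_0\cap B$ of dimension $i<t$ is chosen in $\left[a\atop i\right]$ ways, and the $b$-subspaces meeting $A_0$ in exactly $C$ correspond, in $V/C$, to $(b-i)$-subspaces meeting $A_0/C$ trivially, of which there are $q^{(a-i)(b-i)}\left[n-a\atop b-i\right]$ by Lemma \ref{lemc21}(i). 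Summing over $0\le i\le t-1$ gives $|N_{A_0}|=N_a:=\sum_{i=0}^{t-1}q^{(a-i)(b-i)}\left[a\atop i\right]\left[n-a\atop b-i\right]$, whence $|\mathcal{B}|\le\left[n\atop b\right]-N_a$. When $|\mathcal{A}|=1$ this is already \eqref{74}, with equality exactly when $\mathcal{B}$ is the whole $t$-neighborhood of $A_0$, which is case (i).

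The substance is the case $|\mathcal{A}|\ge 2$. Put $N_A=\{B\in\left[V\atop b\right]:\dim(A\cap B)<t\}$, so $|N_A|=N_a$ for every $A$ by transitivity of $GL(V)$; since the largest $\mathcal{B}$ compatible with a given $\mathcal{A}$ has size $\left[n\atop b\right]-\left|\bigcup_{A\in\mathcal{A}}N_A\right|$, the theorem is equivalent to the covering inequality $\left|\bigcup_{A\in\mathcal{A}}N_A\right|\ge N_a+|\mathcal{A}|-1$. I would prove this by fixing a reference $A_1\in\mathcal{A}$ and exhibiting, for every other $A\in\mathcal{A}$, a $b$-subspace $B_A$ with $\dim(A\cap B_A)<t\le\dim(A_1\cap B_A)$, chosen pairwise distinct; these furnish $|\mathcal{A}|-1$ members of $\bigcup_A N_A$ lying outside the size-$N_a$ set $N_{A_1}$. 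The slack hypothesis $a+b<n+t$ guarantees that every $b$-subspace fails to $t$-meet some $a$-subspace and vice versa, so neither family fills its layer, and the fiber count shows each difference $N_A\setminus N_{A_1}$ is large; the distinct choice is then set up as a system of distinct representatives.

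Finally I would analyze equality when $|\mathcal{A}|\ge 2$: tightness of the covering inequality pins $\mathcal{A}$ down, via the uniqueness clause of the Erd\H{o}s--Ko--Rado theorem (Theorem \ref{EKR}), as the full $t$-neighborhood of a single $b$-subspace $B_0$ with $\mathcal{B}=\{B_0\}$. The double count $\left[n\atop a\right]N_a=\left[n\atop b\right]N_b$, obtained by counting pairs $(A,B)$ with $\dim(A\cap B)<t$ in two ways (here $N_b$ is the symmetric count of $a$-subspaces failing to $t$-meet a fixed $b$-subspace, and $|\mathcal{A}|=\left[n\atop a\right]-N_b$), then shows this configuration attains \eqref{74} precisely when $\left[n\atop a\right]=\left[n\atop b\right]$, giving case (ii), while for $\left[n\atop a\right]<\left[n\atop b\right]$ it is strictly smaller and only (i) survives. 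The hard part will be the covering inequality in the regime where $\mathcal{A}$ is large and nearly saturated: a one-element-at-a-time removal loses a unit, since a redundant member whose bad set is already covered contributes nothing, so the distinct-representative step must be organized globally and calibrated to the threshold $\left[n\atop a\right]\le\left[n\atop b\right]$, which is exactly where the inequality becomes tight.
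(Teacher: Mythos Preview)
The paper does not prove this theorem at all: it is quoted verbatim from Wang--Zhang \cite{cross} and used as a black box. So there is no ``paper's own proof'' to compare against; your proposal has to stand on its own.

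Your reduction is sound: with $N_A=\{B\in\left[V\atop b\right]:\dim(A\cap B)<t\}$ and $|N_A|=N_a$ for every $A$, the inequality \eqref{74} is equivalent to the covering bound
\[
\Bigl|\bigcup_{A\in\mathcal{A}} N_A\Bigr|\ \ge\ N_a+|\mathcal{A}|-1,
\]
and the counting of $N_a$ via Lemma~\ref{lemc21}(i) is correct. The gap is that you do not actually prove this covering bound. Saying that each $N_A\setminus N_{A_1}$ is ``large'' does not yield a system of distinct representatives: Hall's condition for the family $\{N_A\setminus N_{A_1}:A\in\mathcal{A}\setminus\{A_1\}\}$ asks that $\bigl|\bigcup_{A\in S}(N_A\setminus N_{A_1})\bigr|\ge|S|$ for \emph{every} $S$, which is essentially the same covering inequality one level down, so the argument as written is circular. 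You explicitly flag this (``the hard part will be the covering inequality in the regime where $\mathcal{A}$ is large and nearly saturated''), which means the proposal is a plan with the central step missing. For context, Wang--Zhang do not argue this way; their proof goes through bounds on nontrivial independent sets in the relevant bipartite inclusion graph, which is what supplies the global control your SDR step is trying to manufacture by hand.

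The equality analysis also has a wrong citation. When $|\mathcal{A}|\ge2$ and the covering bound is tight, you invoke ``the uniqueness clause of the Erd\H{o}s--Ko--Rado theorem (Theorem~\ref{EKR})'' to force $\mathcal{A}$ to be the full $t$-neighborhood of a single $B_0$. But Theorem~\ref{EKR} concerns a single $t$-intersecting family, and nothing in the hypotheses makes $\mathcal{A}$ itself $t$-intersecting; cross-$t$-intersecting with $\mathcal{B}$ is a different constraint. The double-count $\left[n\atop a\right]N_a=\left[n\atop b\right]N_b$ is fine, and the conclusion you want (only case~(ii) survives when $\left[n\atop a\right]=\left[n\atop b\right]$) is correct, but the mechanism that pins down $\mathcal{A}$ has to come out of the tightness analysis of the covering/independent-set bound itself, not from EKR.
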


We will sharpen the upper bound of \eqref{74} in a special case in the following lemma, which will be very useful in the proofs of Theorems\;\ref{space two} and\;\ref{antichain2}.

\begin{lem}\label{cross-}
Suppose $\mathcal{A}\subseteq \left[V\atop k\right]$ and $\mathcal{B}\subseteq \left[V\atop k+1\right]$ are cross-intersecting families. Further suppose $\mathcal{B}$ is $2$-intersecting. Then for $n\geq 2k+1$,
\begin{equation}\label{eq6}
\begin{array}{rl}
|\mathcal{A}|+|\mathcal{B}|\leq \left[n\atop k\right]-q^{k(k+1)}\left[n-k-1\atop k\right]+1.
\end{array}
\end{equation}
Moreover for $n\geq 2k+2$, equality holds if and only if $\mathcal{B}=\{B\}~for~some~B\in \left[V\atop k+1\right]$ and $\mathcal{A}=\{A\in \left[V\atop k\right]:{\rm dim}(A\cap B)\geq 1\}$.
\end{lem}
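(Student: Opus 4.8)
\textbf{Proof proposal for Lemma~\ref{cross-}.}

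The plan is to build on Theorem~\ref{cross} and then extract the stronger constant by exploiting the extra $2$-intersecting hypothesis on $\mathcal{B}$. First I would dispose of the trivial case $\mathcal{B}=\emptyset$: then $\mathcal{A}\subseteq\left[V\atop k\right]$ gives $|\mathcal{A}|\le\left[n\atop k\right]$, which is comfortably below the claimed bound, so we may assume $\mathcal{B}\neq\emptyset$ and fix some $B_0\in\mathcal{B}$. Because $\mathcal{A}$ and $\mathcal{B}$ are cross-intersecting, every $A\in\mathcal{A}$ satisfies $\dim(A\cap B_0)\ge 1$, so $\mathcal{A}\subseteq\{A\in\left[V\atop k\right]:\dim(A\cap B_0)\ge 1\}$; by Lemma~\ref{lemc21}(i) applied with $Z=B_0$ (dimension $k+1$) and $l=k$, the number of $k$-subspaces meeting $B_0$ trivially is $q^{k(k+1)}\left[n-k-1\atop k\right]$, hence
\begin{equation*}
|\mathcal{A}|\le \left[n\atop k\right]-q^{k(k+1)}\left[n-k-1\atop k\right].
\end{equation*}
So it suffices to show $|\mathcal{B}|\le 1$ whenever $|\mathcal{A}|$ is close to this maximum, or more precisely to trade off $|\mathcal{A}|$ against $|\mathcal{B}|$ so that $|\mathcal{A}|+|\mathcal{B}|$ never exceeds the target by more than $1$.

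The cleaner route is a direct dichotomy on $|\mathcal{B}|$. If $|\mathcal{B}|\le 1$ we are done by the displayed bound on $|\mathcal{A}|$ above (adding the single possible element of $\mathcal{B}$ contributes the $+1$). If $|\mathcal{B}|\ge 2$, I want to show the sum is strictly smaller. Here the $2$-intersecting property of $\mathcal{B}$ enters: since $\mathcal{B}\subseteq\left[V\atop k+1\right]$ is $2$-intersecting with $n\ge 2k+1=2(k+1)-1$, Theorem~\ref{EKR} (with $t=2$, $k\to k+1$) bounds $|\mathcal{B}|$ by $\left[n-2\atop k-1\right]$ when $n\ge 2k+2$ (or $\left[2k\atop k+1\right]$ when $n=2k+1$). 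Simultaneously, $\mathcal{A}$ must be cross-intersecting with \emph{every} $B\in\mathcal{B}$, so $\mathcal{A}\subseteq\bigcap_{B\in\mathcal{B}}\{A:\dim(A\cap B)\ge 1\}$; taking two distinct $B_1,B_2\in\mathcal{B}$, every $A\in\mathcal{A}$ meets both, and I would estimate $|\{A\in\left[V\atop k\right]:\dim(A\cap B_1)\ge 1,\ \dim(A\cap B_2)\ge 1\}|$ from above, showing it is smaller than $\left[n\atop k\right]-q^{k(k+1)}\left[n-k-1\atop k\right]$ by a margin that dominates $\left[n-2\atop k-1\right]$. The size of that intersection is largest when $\dim(B_1\cap B_2)=k$ (the $2$-intersecting hypothesis forces $\dim(B_1\cap B_2)\ge 1$, but actually $B_1\neq B_2$ in $\left[V\atop k+1\right]$ gives $\dim(B_1\cap B_2)\le k$, and the count is monotone in this dimension); one then inclusion–excludes as in Lemma~\ref{lemc21} to write this count in closed Gaussian form and compares it against the benchmark term by term, using $n\ge 2k+2$ for the strict inequalities.

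The equality analysis runs alongside the $|\mathcal{B}|\le 1$ branch: equality in the $|\mathcal{A}|$-bound together with $|\mathcal{B}|=1$ forces $\mathcal{B}=\{B\}$ and $\mathcal{A}=\{A\in\left[V\atop k\right]:\dim(A\cap B)\ge 1\}$, and one checks this pair indeed is cross-intersecting with $\mathcal{B}$ vacuously $2$-intersecting (a single subspace) and attains \eqref{eq6}. I expect the main obstacle to be the case $|\mathcal{B}|\ge 2$: making the double-intersection count explicit and verifying that the resulting quantity plus the EKR bound $\left[n-2\atop k-1\right]$ stays below the target requires a careful monotonicity argument in $\dim(B_1\cap B_2)$ and a somewhat delicate Gaussian-binomial estimate, much in the spirit of the inequality chain proved in Lemma~\ref{lemc21}(ii); the boundary case $n=2k+1$ (where equality is not claimed) needs separate, slightly looser bookkeeping since the EKR bound takes the form $\left[2k\atop k+1\right]$ there.
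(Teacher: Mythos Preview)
Your overall architecture---dichotomy on $|\mathcal{B}|$, the $|\mathcal{B}|\le 1$ case via Lemma~\ref{lemc21}(i), and for $|\mathcal{B}|\ge 2$ combining the EKR bound on $|\mathcal{B}|$ with a sharpened bound on $|\mathcal{A}|$ coming from cross-intersection with two distinct $B_1,B_2\in\mathcal{B}$---is exactly the paper's. Two points, however, diverge and the second is a genuine gap.

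First, for $n=2k+1$ the paper does not do ``separate bookkeeping'': it simply invokes Theorem~\ref{cross} with the roles reversed ($a=k+1$, $b=k$, $t=1$), which is legitimate because $\left[n\atop k\right]=\left[n\atop k+1\right]$ there; the $2$-intersecting hypothesis on $\mathcal{B}$ is not even used in that boundary case.

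Second, and more substantively, your plan for the $|\mathcal{B}|\ge 2$ step has a real hole. You propose to upper-bound $\bigl|\{A\in\left[V\atop k\right]:\dim(A\cap B_1)\ge 1,\ \dim(A\cap B_2)\ge 1\}\bigr|$ by inclusion--exclusion ``as in Lemma~\ref{lemc21}'' after reducing to the extremal case $\dim(B_1\cap B_2)=k$ via a monotonicity claim. But (a) the monotonicity in $\dim(B_1\cap B_2)$ is asserted, not proved, and is not obvious; and (b) the inclusion--exclusion does not reduce to a Lemma~\ref{lemc21}-type count, because the cross term $\bigl|\{A:A\cap B_1=0,\ A\cap B_2=0\}\bigr|$ is \emph{not} $\bigl|\{A:A\cap(B_1+B_2)=0\}\bigr|$---a $k$-space $A$ can meet $B_1+B_2$ in a vector $b_1+b_2$ while still missing each $B_i$ individually. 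The paper sidesteps both problems by working on the complementary side: it lower-bounds $|\Gamma(\{B_1,B_2\})|$ (the $k$-spaces missing at least one $B_i$) as $|\Gamma_1|+|\Gamma_2|$ with $\Gamma_1=\{A:A\cap B_1=0\}$ and $\Gamma_2=\{A:A\cap B_2=0,\ \dim(A\cap B_1)>0\}$, and then bounds $|\Gamma_2|$ from below by an explicit construction---pick $E\in\left[B_1\atop 1\right]$ with $E\nleq B_2$, a hyperplane $W\ge B_2$ with $V=E\oplus W$, and set up a bijection between $\{A:E\le A,\ A\cap B_2=0\}$ and $\{R\in\left[W\atop k-1\right]:R\cap B_2=0\}$, which Lemma~\ref{lemc21}(i) counts as $q^{(k-1)(k+1)}\left[n-k-2\atop k-1\right]$. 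The endgame is then the clean scalar inequality $\left[n-2\atop k-1\right]<1+q^{(k-1)(k+1)}\left[n-k-2\atop k-1\right]$, handled via Lemma~\ref{lemc21}(ii). This constructive lower bound is the device your outline is missing.
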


\begin{proof}~
If $k=1$, since $\mathcal{B}\subseteq \left[V\atop 2\right]$ is $2$-intersecting, then $|\mathcal{B}|=1$ and \eqref{eq6} holds by Lemma~\ref{lemc21}~${\rm(i)}$. We always let $k\geq 2$ in the following proof.

First we consider the case$~n=2k+1$. Since $\left[n\atop k\right]=\left[n\atop k+1\right]$,  we can set $a=k+1, b=k, t=1$ in Theorem\;\ref{cross}. Then \eqref{eq6} holds by \eqref{74}.

Next we let~$n\geq 2k+2$.
Since $\mathcal{B}\subseteq \left[V\atop k+1\right]$ is $2$-intersecting and $n\geq 2k+2$, then by Theorem \ref{EKR},  we have
\begin{equation}\label{eq10}
1\leq |\mathcal{B}| \leq \left[n-2\atop k-1\right].
 \end{equation}

For any $\mathcal{C}\subseteq \left[V\atop k+1\right]$, define $\Gamma(\mathcal{C})=\{T\in \left[V\atop k\right]:{\rm dim}(T\cap C)=0~\text{for~some}~C\in \mathcal{C}\}$. Since ${\rm dim}(A\cap B)\geq 1$ for all $A\in \mathcal{A}$ and $B\in \mathcal{B}$, we have $\mathcal{A}\cap \Gamma(\mathcal{B})=\emptyset$. Hence,

\begin{equation}\label{eq11}
|\mathcal{A}|+|\Gamma(\mathcal{B})|\leq \left[n\atop k\right].
 \end{equation}

Case\ $a$$:$~Suppose $\mathcal{B}=\{B\}\subseteq \left[V\atop k+1\right]$. Then $\Gamma(\mathcal{B})=\{T\in \left[V\atop k\right]:{\rm dim}(T\cap B)=0\}$. Hence we have $|\Gamma(\mathcal{B})|=q^{k(k+1)}\left[n-k-1\atop k\right]$ by Lemma~\ref{lemc21}~${\rm(i)}$, which implies \eqref{eq6} by using \eqref{eq11}. Moreover, the equality in \eqref{eq6} holds if and only if the equality in \eqref{eq11} holds, that is $\mathcal{A}=\{A\in \left[V\atop k\right]:{\rm dim}(A\cap B)\geq 1\}$.

Case\ $b$$:$~Suppose $|\mathcal{B}|\geq 2$. Let
\begin{equation*}
\mathcal{B}=\{B_{1}, B_{2}, \ldots, B_{r}\}\subseteq \left[V\atop k+1\right],
\end{equation*}
where $r\geq 2$. We claim that

\begin{equation}\label{eq31}
|\Gamma(\mathcal{B})|\geq q^{k(k+1)}\left[n-k-1\atop k\right]+q^{(k-1)(k+1)}\left[n-k-2\atop k-1\right].
\end{equation}

Since $\{B_{1}, B_{2}\}\subseteq \mathcal{B}$, then $\Gamma(\{B_{1}, B_{2}\})\subseteq\Gamma(\mathcal{B})$ by the definition of $\Gamma(\mathcal{B})$. Let $\Gamma_{1}=\{T\in \left[V\atop k\right]:{\rm dim}$$(T\cap B_{1})=0\}$, $\Gamma_{2}=\{T\in \left[V\atop k\right]:{\rm dim}(T\cap B_{2})=0, {\rm dim}(T\cap B_{1})>0\}$.  It is clear that $\Gamma_{1}\cup \Gamma_{2}\subseteq \Gamma(\{B_{1}, B_{2}\})$ and $\Gamma_{1}\cap \Gamma_{2}=\emptyset.$ Then $|\Gamma(\mathcal{B})|\geq |\Gamma_{1}|+|\Gamma_{2}|$.  By Lemma\;\ref{lemc21}~${\rm(i)}$, we have
\begin{equation}\label{81}
|\Gamma_{1}|= q^{k(k+1)}\left[n-k-1\atop k\right].
 \end{equation}

Obviously, there exists $E\in \left[{B}_{1}\atop 1\right], E\nleq {B}_{2}$ such that ${B}_{2}\leq W\in \left[V\atop n-1\right]$, where $V=E\oplus W$ (namely $V=E+W$ and $E\cap W=\{0\}$). Define a subfamily of $\Gamma_{2}$ by $\Gamma_{3}= \{T\in \left[V\atop k\right]:{\rm dim}(T\cap B_{2})=0, E\leq T\}$, which has a one-to-one correspondence to $\Gamma_{4}= \{R\in \left[W\atop k-1\right]:{\rm dim}(R\cap B_{2})=0\}$. There is a natural injective map $\varphi:\Gamma_{3}\rightarrow\Gamma_{4}$~by~$T\mapsto T\cap W$. For any $R\in \Gamma_{4}$, since $R+{B}_{2}\leq W, E\nleq W$, we have $E+R\in\Gamma_{3}$. So $\varphi$ is also surjective. By Lemma\;\ref{lemc21}~{\rm(i)}, we have
\begin{equation*}
|\Gamma_{2}|\geq |\Gamma_{3}|=|\Gamma_{4}|=q^{(k-1)(k+1)}\left[n-k-2\atop k-1\right].
 \end{equation*}
Together with \eqref{81}, we complete the proof of \eqref{eq31}. Combining \eqref{eq10},~\eqref{eq11},~and~\eqref{eq31}, we have
\begin{equation}\label{eq61}
\begin{array}{rl}
|\mathcal{A}|+|\mathcal{B}|\leq \left[n\atop k\right]-q^{k(k+1)}\left[n-k-1\atop k\right]-q^{(k-1)(k+1)}\left[n-k-2\atop k-1\right]+\left[n-2\atop k-1\right].
\end{array}
\end{equation}

Hence, we only need to show that
\begin{equation}\label{eq32}
\left[n-2\atop k-1\right]<1+q^{(k-1)(k+1)}\left[n-k-2\atop k-1\right]
\end{equation}
to prove the final conclusion.
If $k=2$, \eqref{eq32} clearly holds. If $k\geq 3$, by Lemma \ref{lemc21}~{\rm(ii)}, we have the inequality:
\begin{equation*}
q^{(k-1)(k+1)}\left[n-k-2\atop k-1\right]> \left[n-1\atop k-1\right]-\left[k+1\atop 1\right]\left[n-2\atop k-2\right].
 \end{equation*}
Since $q^{n-k}\leq \frac{q^{n}-1}{q^{k}-1}\leq q^{n-k+1}$ and $~n\geq 2k+2$, we have
\begin{equation*}
\begin{array}{rl}
\left[n-1\atop k-1\right]-\left[k+1\atop 1\right]\left[n-2\atop k-2\right]+1-\left[n-2\atop k-1\right]&> \left[n-1\atop k-1\right]-\left[k+1\atop 1\right]\left[n-2\atop k-2\right]-\left[n-2\atop k-1\right]\\[.3cm]
&= \frac{q^{n-1}-1}{q^{k-1}-1}\left[n-2\atop k-2\right]-\frac{q^{k+1}-1}{q-1}\left[n-2\atop k-2\right]-\frac{q^{n-k}-1}{q^{k-1}-1}\left[n-2\atop k-2\right]\\[.3cm]
&\geq (q^{n-k}-q^{k+1}-q^{n-2k+2})\left[n-2\atop k-2\right]\\[.3cm]
&\geq (2q^{n-k-1}-q^{k+1}-q^{n-2k+2})\left[n-2\atop k-2\right]\\[.3cm]
&\geq 0,\\[.3cm]
\end{array}
\end{equation*}
meaning that \eqref{eq32} holds as well. So we obtain that $|\mathcal{A}|+|\mathcal{B}|< \left[n\atop k\right]-q^{k(k+1)}\left[n-k-1\atop k\right]+1$ by \eqref{eq61}~and~\eqref{eq32}.

Reviewing the whole proof, we get that the families $\mathcal{A}$ and $\mathcal{B}$ attaining the equality in \eqref{eq6} are just those stated in the lemma.
\end{proof}

\section{ Proof of Theorem\;\ref{space two} }

In this section, we will prove Theorem\;\ref{space two}. Let $\mathcal{F}\subseteq \mathcal{L}(V)$ be an $s$-union family of maximum size with $\mathcal{F}\nsubseteq \mathcal{K}[n,s]$. We will calculate the cardinality
\begin{equation}\label{eq41}
|\mathcal{F}|=\sum _{0\leq i\leq s}|\mathcal{F}_i|, {\rm where} ~\mathcal{F}_i=\mathcal{F}\cap \left[V\atop i\right].
\end{equation}

We consider the singularity of $s$.

$(1)$ Let$~s=2d+1$. It is easily seen that $\mathcal{F}_{i}=\emptyset$ for $i\geq 2d+2$. Adding up \eqref{eq4} for $0\leq i\leq d$, we have
\begin{equation}\label{eq14}
\sum_{i=0}^d(|\mathcal{F}_i|+|\mathcal{F}_{2d+2-i}|)\leq \sum_{i=0}^d\left[n\atop i\right].
\end{equation}
Since $\mathcal{F}_{d+1}\subseteq \mathcal{F}$ is $(2d+1)$-union, we have that $\mathcal{F}_{d+1}$ is an intersecting family. Since for $G\leq F\in \mathcal{F}$, the family $\mathcal{F}\cup \{G\}$ is also $s$-union. So we can assume that $G\leq F$ and $F\in \mathcal{F}$ implies $G\in \mathcal{F}$. Then we distinguish two cases.

Case\ $a$$:$ Suppose there exists $G\in \mathcal{F}$ with ${\rm dim}(G)\geq d+2$. Then $\left[G\atop d+1\right]\subseteq \mathcal{F}_{d+1}$, which implies that ${\rm dim}(\bigcap_{F\in \mathcal{F}_{d+1}})=0$. By Theorem \ref{HM}, we have
\begin{equation}\label{eq7}
\begin{array}{rl}
|\mathcal{F}_{d+1}|\leq \left[n-1\atop d\right]-q^{d(d+1)}\left[n-d-2\atop d\right]+q^{d+1}.
\end{array}
\end{equation}
Moreover, there exists $\mathcal{F}_{2d+2-i}\neq \emptyset$ for $0\leq i\leq d$ in this case. Then $\eqref{eq4}$ is a strict inequality for some $i$, which implies the inequality in \eqref{eq14} is strict as well.
Hence, by \eqref{eq41}-\eqref{eq7}, we have
\begin{equation*}
|\mathcal{F}|=\sum_{i=0}^d(|\mathcal{F}_i|+|\mathcal{F}_{2d+2-i}|)+|\mathcal{F}_{d+1}|< \sum_{i=0}^d\left[n\atop i\right]+\left[n-1\atop d\right]-q^{d(d+1)}\left[n-d-2\atop d\right]+q^{d+1}.
\end{equation*}

Case\ $b$$:$ Suppose that $\mathcal{F}_i=\emptyset$ for all $i\geq d+2$. Now \eqref{eq4} holds trivially, that is \eqref{eq14} holds trivially as well. We have ${\rm dim}(\bigcap_{F\in \mathcal{F}_{d+1}})=0$, because otherwise $\mathcal{F}\subseteq \mathcal{K}[n,2d+1]$.
We use Theorem \ref{HM} again. The upper bound in \eqref{eq9} can be obtained by \eqref{eq14} and \eqref{eq7}. Moreover, if the equality in \eqref{eq9} holds, then $\mathcal{F}_i=\left[V\atop i\right]$ for all $0\leq i \leq d$ and the  equality in \eqref{eq7} holds as well. So applying Theorem \ref{HM}, we have
\begin{equation*}
\mathcal{F}_{d+1}=\{F\in \left[V\atop d+1\right]:E\leq F, {\rm dim}(F\cap U)\geq 1\}\bigcup \left[E+U\atop d+1\right],
\end{equation*}
where $ E, U$ are fixed $1$-dimensional, $(d+1)$-dimensional subspaces of $V$ with $E\nleq U$. If $d+1=3$, the equality is also attained by taking $\mathcal{F}_3=\{F\in \left[V\atop 3\right]:{\rm dim}(F\cap D)\geq 2\}$, where $D$ is a fixed $3$-dimensional subspace of $V$.

$(2)$ Let $s=2d$. For $0\leq i< d$, we use \eqref{eq4} as well. Then
\begin{equation}\label{eq16}
\sum _{0\leq i<d}(|\mathcal{F}_i|+|\mathcal{F}_{2d+1-i}|)\leq \sum _{0\leq i<d}\left[n\atop i\right].
\end{equation}

When $i=d$, we have a stronger result. For convenience, we set $\mathcal{A}=\mathcal{F}_d,~\mathcal{B}=\mathcal{F}_{d+1}$ in the following. Since $\mathcal{F}\nsubseteq \mathcal{K}[n,2d]$, there exists $G\in \mathcal{F}$ with ${\rm dim}(G)\geq d+1$. Hence $\left[G\atop d+1\right]\subseteq \mathcal{B}\neq \emptyset$.

Since $\mathcal{B}\subseteq \mathcal{F}$ is $2d$-union, then for all $B, B'\in \mathcal{B}$, we have \begin{equation*}
{\rm dim}(B\cap B')={\rm dim}B+{\rm dim}B'-{\rm dim}(B+B')\geq 2d+2-2d=2,
\end{equation*}
i.e., $\mathcal{B}$ is a 2-intersecting family. Similarly, we have that $\mathcal{A}$~and~$\mathcal{B}$ are cross-intersecting. Then we apply Lemma \ref{cross-} for $k=d$ to obtain

\begin{equation}\label{eq17}
|\mathcal{A}|+|\mathcal{B}|\leq \left[n\atop d\right]-q^{d(d+1)}\left[n-d-1\atop d\right]+1.
\end{equation}

Since for $i\geq 2d+1$, $\mathcal{F}_i=\emptyset$, we can add up \eqref{eq16} along with \eqref{eq17} to obtain \eqref{eq8} by \eqref{eq41}. Moreover, if the equality in \eqref{eq8} holds, then the  equalities in \eqref{eq16} and \eqref{eq17} hold as well. Then by Lemmas \ref{s,i} and \ref{cross-}, we have
\begin{equation*}
\mathcal{F}=\{F\leq V:{\rm dim}(F)\leq d\}\setminus\{F\subseteq \left[V\atop d\right]:{\rm dim}(F\cap U)=0\}\bigcup\{U\}.
\end{equation*}
where $U$ is a fixed $(d+1)$-dimensional subspace of $V$.
\qed

\section{ Proofs of Theorems\;\ref{2d+1} and\;\ref{antichain2} }
 In this section, we will prove Theorems\;\ref{2d+1} and\;\ref{antichain2}. The main approach adopts a series of replacement in an $s$-union antichain by shadows or shades. First, we will define the concept of shade and disclose a new relationship between a family of $k$-dimensional subspaces and its shadows or shades.

For a family $\mathcal{H}\subseteq \left[V\atop k\right]$, we define its {\it shade} by
\begin{equation*}
\bigtriangledown(\mathcal{H})=\{G\in \left[V\atop k+1\right]:H\leq G \text{~for~some}~H\in \mathcal{H}\}.
\end{equation*}

\begin{lem}\label{shade}
Suppose that $\mathcal{H}\subseteq \left[V\atop k\right]$, $n\geq 3$. Then the following hold.
 \begin{itemize}
\item[\rm(i)]~If $k\geq \lceil \frac{n}{2}\rceil+1$, then $|\bigtriangleup(\mathcal{H})|-|\mathcal{H}|\geq q\left[k-1\atop 1\right]$.  Moreover, equality holds if and only if $\mathcal{H}=\{U\}$, where $U$ is a fixed $k$-dimensional subspace of $V$.
 \item[\rm(ii)]~If $k\leq \lfloor \frac{n}{2}\rfloor-1$, then $|\bigtriangledown(\mathcal{H})|-|\mathcal{H}|\geq q\left[n-k-1\atop 1\right]$. Moreover, equality holds if and only if $\mathcal{H}=\{U\}$, where $U$ is a fixed $k$-dimensional subspace of $V$.
\end{itemize}

\end{lem}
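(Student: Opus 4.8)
The plan is to prove part (i) directly and then deduce part (ii) from it by orthogonal duality. Throughout part (i) assume $\mathcal{H}\neq\emptyset$ (a nonempty family is presupposed), and record that $k\geq\lceil\frac n2\rceil+1$ forces $n\leq 2k-2$ and $k\geq 3$. The key tool is the vector-space Kruskal--Katona theorem (Theorem \ref{k-1-shadow}): writing $|\mathcal{H}|=\left[m\atop k\right]$ for the unique real number $m\in[k,n]$, it gives $|\bigtriangleup(\mathcal{H})|\geq\left[m\atop k-1\right]$, hence
\begin{equation*}
|\bigtriangleup(\mathcal{H})|-|\mathcal{H}|\ \geq\ g(m),\qquad g(m):=\left[m\atop k-1\right]-\left[m\atop k\right].
\end{equation*}
A short computation gives $g(k)=\left[k\atop 1\right]-1=q\left[k-1\atop 1\right]$, which is exactly the claimed bound and is attained by the singleton $\mathcal{H}=\{U\}$ (a $k$-subspace has precisely $\left[k\atop 1\right]$ hyperplanes). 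So everything reduces to showing $g(m)\geq g(k)$ for $m\in[k,n]$, with equality only at $m=k$.

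This last step is the main obstacle, because $g(m)=\left[m\atop k-1\right]\cdot\dfrac{q^{k}-q^{m-k+1}}{q^{k}-1}$ is a product of a factor increasing in $m$ and a factor decreasing in $m$, so one has to see that the first factor wins. I would bound $\left[m\atop k-1\right]\geq q^{(k-1)(m-k)}\left[k\atop 1\right]$ by the termwise estimate $\dfrac{q^{m-i}-1}{q^{k-i}-1}\geq q^{m-k}$ for $0\leq i\leq k-2$; since $m\leq 2k-2$ the second factor of $g$ is positive, so, setting $s:=m-k\in[0,k-2]$,
\begin{equation*}
\frac{g(m)}{g(k)}\ \geq\ q^{(k-1)s}\cdot\frac{q^{k-1}-q^{s}}{q^{k-1}-1}\ =\ \frac{q^{(k-1)(s+1)}-q^{ks}}{q^{k-1}-1}.
\end{equation*}
It then suffices to prove $\phi(s):=q^{(k-1)(s+1)}-q^{ks}-q^{k-1}+1\geq 0$ on $[0,k-2]$. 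Since $\phi(0)=0$ and $\phi'(s)=(\ln q)\,q^{ks}\bigl((k-1)q^{\,k-1-s}-k\bigr)$ with $(k-1)q^{\,k-1-s}\geq (k-1)q\geq 2(k-1)>k$ on $[0,k-2]$ (here $k\geq 3$ is used), $\phi$ is strictly increasing, so $\phi\geq 0$ with equality only at $s=0$. This gives $g(m)\geq g(k)$, and $g(m)=g(k)$ forces $s=0$, i.e.\ $|\mathcal{H}|=\left[k\atop k\right]=1$; hence the equality case of part (i) is exactly $\mathcal{H}=\{U\}$.

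For part (ii) I would set $\mathcal{H}':=\mathcal{H}^{\bot}\subseteq\left[V\atop n-k\right]$ and $k':=n-k$. From $k\leq\lfloor\frac n2\rfloor-1$ we get $k'\geq\lceil\frac n2\rceil+1$, so part (i) applies to $\mathcal{H}'$. Since $(A+B)^{\bot}=A^{\bot}\cap B^{\bot}$, one checks that $\bigl(\bigtriangledown(\mathcal{H})\bigr)^{\bot}=\bigtriangleup(\mathcal{H}^{\bot})$, whence $|\bigtriangledown(\mathcal{H})|=|\bigtriangleup(\mathcal{H}')|$ and $|\mathcal{H}|=|\mathcal{H}'|$; part (i) for $\mathcal{H}'$ then reads $|\bigtriangledown(\mathcal{H})|-|\mathcal{H}|\geq q\left[k'-1\atop 1\right]=q\left[n-k-1\atop 1\right]$, with equality iff $\mathcal{H}'$, equivalently $\mathcal{H}$, is a single subspace. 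The only genuinely delicate piece of the whole argument is the one-variable estimate $\phi\geq 0$ controlling how fast $g$ grows past $m=k$.
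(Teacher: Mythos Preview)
Your argument is correct and follows the paper's overall strategy: both reduce part~(i) to the one-variable inequality $g(m):=\left[m\atop k-1\right]-\left[m\atop k\right]\geq g(k)$ on $[k,2k-2]$ via Theorem~\ref{k-1-shadow}, and both obtain part~(ii) from~(i) by the duality $\bigl(\bigtriangledown(\mathcal{H})\bigr)^{\bot}=\bigtriangleup(\mathcal{H}^{\bot})$.

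The only genuine difference is in how the inequality $g(m)\geq g(k)$ is established. The paper observes that, under the substitution $y=q^{x}$, $g$ becomes a degree-$k$ polynomial with simple roots at $x=0,1,\dots,k-2,2k-1$; Rolle's theorem then forces $g$ to be unimodal on $[k-2,2k-1]$, so its minimum on $[k,2k-2]$ is at an endpoint, and a direct comparison shows $g(2k-2)>g(k)$. You instead bound $\left[m\atop k-1\right]\geq q^{(k-1)(m-k)}\left[k\atop 1\right]$ termwise and reduce to the monotonicity of $\phi(s)=q^{(k-1)(s+1)}-q^{ks}-q^{k-1}+1$. Your route avoids the root-counting and unimodality discussion at the cost of a short calculus check; the paper's route is more structural and explains \emph{why} the minimum sits at $m=k$. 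Both give the strict inequality for $m>k$, which is what pins down the equality case.
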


\begin{proof}~
\rm(i) Let $|\mathcal{H}|=\left[x\atop k\right]$, where $k\leq x\leq n\leq 2k-2$. Then by Theorem \ref{k-1-shadow}, $|\bigtriangleup(\mathcal{H})|\geq \left[x\atop k-1\right]$. So we have
\begin{equation*}
\begin{array}{rl}
|\bigtriangleup(\mathcal{H})|-|\mathcal{H}|&\geq \left[x\atop k-1\right]-\left[x\atop k\right]
=\frac{q^{k}-q^{x-k+1}}{q^{k}-1}\left[x\atop k-1\right]
=\frac{q^{k}-q^{x-k+1}}{q^{k}-1}\prod_{i=0}^{k-2}\frac{q^{x-i}-1}{q^{k-1-i}-1}.
\end{array}
\end{equation*}
Let $f(x)$ be the $\text{RHS}$ of the above inequality. By setting $y=q^{x}$, we can rewrite $f(x)$ as a polynomial $g(y)$ of degree $k$, namely
\begin{equation*}
\begin{array}{rl}
g(y)= \frac{q^{k}-yq^{-k+1}}{q^{k}-1}\prod_{i=0}^{k-2}\frac{yq^{-i}-1}{q^{k-1-i}-1}.
\end{array}
\end{equation*}
Because the polynomial $g(y)$ has $k$ simple roots $1,~q,~\ldots,~q^{k-2},~q^{2k-1}$, $f(x)$ has $k$ simple roots $0,~1,~\ldots,~k-2,~2k-1.$ It is clear that $f'(x)$ has a simple root in each interval between these roots. Since $f(x)<0$~if~$x> 2k-1$, then in $[k-2, 2k-1]$,~$f(x)$ is  increasing up to some value and then decreasing. Hence for $k\leq x\leq n\leq 2k-2$,

\begin{equation*}
f(x)\geq \min\{f(k),~f(2k-2)\}.
\end{equation*}
Clearly, we have
\begin{equation*}
\begin{array}{rl}
f(2k-2)-f(k)&=\left[2k-2\atop k-1\right]-\left[2k-2\atop k\right]-\left[k\atop k-1\right]+1\\[.3cm]
&=\frac{q^{k}-q^{k-1}}{q^{k-1}-1}\left[2k-2\atop k-2\right]-q\left[k-1\atop 1\right]\\[.3cm]
&>\left[2k-2\atop k-2\right]-q^{k}\\[.3cm]
&>0.\\[.3cm]
\end{array}
\end{equation*}
Therefore,
\begin{equation*}
f(x)\geq f(k)=q\left[k-1\atop 1\right].
\end{equation*}
The equality holds if and only if $x=k$, that is $|\mathcal{H}|=1$. Equivalently,~$\mathcal{H}=\{U\}$, where $U$ is a fixed $k$-dimensional subspace of $V$.

\rm(ii) We claim that $|\bigtriangledown(\mathcal{H})|=|\bigtriangleup(\mathcal{H}^{\bot})|$. For any $G\in \bigtriangledown(\mathcal{H})$, there exists $H\in \mathcal{H}$ such that $H\leq G$. Then $G^{\bot}\leq H^{\bot}$, where ${\rm dim}(G^{\bot})=n-k-1,~{\rm dim}(H^{\bot})=n-k$, and$~H^{\bot}\in \mathcal{H}^{\bot}$. Thus $G^{\bot}\in \bigtriangleup(\mathcal{H}^{\bot})$.
This gives an injective map $\varphi:\bigtriangledown(\mathcal{H})\rightarrow \bigtriangleup(\mathcal{H}^{\bot})$ by $G\mapsto G^{\bot}$, which is obviously surjective.

Let $k\leq \lfloor \frac{n}{2}\rfloor-1$. Then $n-k\geq \lceil\frac{n}{2}\rceil+1$. We can obtain the following inequality by the above claim and the result of ${\rm(i)}$:
\begin{equation*}
|\bigtriangledown(\mathcal{H})|-|\mathcal{H}|=|\bigtriangleup(\mathcal{H}^{\bot})|-|\mathcal{H}^{\bot}|\geq q\left[n-k-1\atop 1\right].
\end{equation*}
Moreover, the equality holds if and only if $\mathcal{H^{\bot}}=\{U^{\bot}\}$, that is $\mathcal{H}=\{U\}$, where $U$ is a fixed $k$-dimensional subspace of $V$.
\end{proof}

Throughout the remainder of the section, let $\mathcal{F}\subseteq \mathcal{L}(V)$ be a given $s$-union antichain and let $d=\lfloor\frac{s}{2}\rfloor$. For any family $\mathcal{G}\subseteq \mathcal{L}(V)$, define $$l(\mathcal{G})=\max\{{\rm dim}(G):G\in \mathcal{G}\},$$ $$m(\mathcal{G})=\min\{{\rm dim}(G):G\in \mathcal{G}\}.$$ For short $l(\mathcal{F})$, $m(\mathcal{F})$ are briefly denoted by $l$ and $m$ respectively. Denote $~\mathcal{F}_i=\mathcal{F}\cap \left[V\atop i\right]$ for $m\leq i\leq l$.

\begin{lem}\label{4.1}
If $l\leq d $, then the following hold.
\begin{itemize}
\item[\rm(i)]~$|\mathcal{F}|\leq \left[n\atop d\right]$. Moreover, equality holds if and only if $\mathcal{F}=\left[V\atop d\right]$.
\item[\rm(ii)]~If $\mathcal{F}\nsubseteq \left[V\atop d\right]$, then
$|\mathcal{F}|\leq \left[n\atop d\right]-q\left[n-d\atop 1\right]$.
 Moreover, equality holds if and only if $\mathcal{F}=(\left[V\atop d\right]
 \setminus\{F\in \left[V\atop d\right]:U\leq F\})\bigcup \{U\}$, where $U$ is a fixed $(d-1)$-dimensional subspace of $V$.
\end{itemize}
\end{lem}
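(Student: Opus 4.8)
The plan is to induct on $l$, using the shade operation $\bigtriangledown$ to push the family up level by level until it lives inside a single $\left[V\atop d\right]$, while tracking how the cardinality changes. The key structural fact is that if $\mathcal{F}$ is an antichain with $l(\mathcal{F})<d$, then replacing the bottom layer $\mathcal{F}_m$ by the family $(\mathcal{F}\setminus\mathcal{F}_m)\cup\bigtriangledown(\mathcal{F}_m)$ yields again an antichain (since $m\le l-1\le d-1\le\lfloor\frac n2\rfloor-1$, every member of $\bigtriangledown(\mathcal{F}_m)$ has dimension $m+1\le l$, and any containment relation it could create with a higher level would already have forced one in $\mathcal{F}$; one must verify no new comparabilities arise among $\bigtriangledown(\mathcal{F}_m)$ and $\mathcal{F}_{m+1},\dots,\mathcal{F}_l$, which holds because $H\le G$ with $H\in\mathcal{F}_m$, $G\in\mathcal{F}$ of dimension $\ge m+1$ is forbidden in $\mathcal{F}$). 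By Lemma \ref{shade}(ii) this replacement does not decrease $|\mathcal{F}|$, and it strictly increases it unless $\mathcal{F}_m$ is a single subspace; iterating, we either reach a family contained in $\left[V\atop d\right]$ — giving (i) via $\left|\left[V\atop d\right]\right|=\left[n\atop d\right]$ — or we detect the extremal configurations.

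For part (i), after the shade-pushing argument we may assume $\mathcal{F}\subseteq\left[V\atop d\right]$, whence $|\mathcal{F}|\le\left[n\atop d\right]$ trivially, with equality iff $\mathcal{F}=\left[V\atop d\right]$. Tracing equality backwards through the replacements: each step preserved cardinality only if the layer being pushed was a singleton, but a singleton $\{U\}$ with $\dim U<d$ gets replaced by $\bigtriangledown(\{U\})$ which has size $\left[n-\dim U\atop 1\right]>1$, strictly increasing $|\mathcal{F}|$ — contradiction unless there was no such layer, i.e. $\mathcal{F}$ already equalled $\left[V\atop d\right]$.

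For part (ii), assume $\mathcal{F}\nsubseteq\left[V\atop d\right]$, so $l\le d-1$. Run the same procedure but stop one step before the top: push all layers up so that $\mathcal{F}$ becomes concentrated on a single level $\left[V\atop l'\right]$ with $l'\le d-1$, say the family is now $\mathcal{G}\subseteq\left[V\atop l'\right]$ with $|\mathcal{F}|\le|\mathcal{G}|$. If $l'<d-1$, apply Lemma \ref{shade}(ii) once more: $|\bigtriangledown(\mathcal{G})|\ge|\mathcal{G}|+q\left[n-l'-1\atop 1\right]$, and comparing $\left[n-l'-1\atop 1\right]$ for decreasing $l'$ one sees the worst (smallest) gain occurs when $l'=d-1$, so it suffices to handle $\mathcal{G}\subseteq\left[V\atop d-1\right]$: here $\bigtriangledown(\mathcal{G})\subseteq\left[V\atop d\right]$ is an antichain of size $\le\left[n\atop d\right]$ with $|\mathcal{F}|\le|\bigtriangledown(\mathcal{G})|-\bigl(|\bigtriangledown(\mathcal{G})|-|\mathcal{G}|\bigr)$, and by Lemma \ref{shade}(ii), $|\bigtriangledown(\mathcal{G})|-|\mathcal{G}|\ge q\left[n-d\atop 1\right]$, giving $|\mathcal{F}|\le|\mathcal{G}|\le\left[n\atop d\right]-q\left[n-d\atop 1\right]$. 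Equality forces $\mathcal{G}=\{U\}$ for a $(d-1)$-subspace $U$ (the equality case of Lemma \ref{shade}(ii)) with $\bigtriangledown(\mathcal{G})=\left[V\atop d\right]$ — impossible since $\bigtriangledown(\{U\})$ is only the $d$-subspaces containing $U$. So the equality must instead come from the final family being $\bigl(\left[V\atop d\right]\setminus\{F:U\le F\}\bigr)\cup\{U\}$: one checks directly this is an antichain of the right size, and that it is the unique way the inequalities can be tight, after verifying $\mathcal{A}[n,2d]$ in the case $l=d-1,m=d-1$ with one extra element $U$ of dimension $d-1$ is exactly this family.

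\textbf{Main obstacle.} The delicate point is the equality analysis: showing that the \emph{only} antichains attaining the bound in (ii) are the claimed $\mathcal{A}[n,2d]$-type families. One must argue that if $|\mathcal{F}|$ equals the bound then $\mathcal{F}$ has exactly two nonempty levels, $\mathcal{F}_{d-1}$ a single subspace $U$ and $\mathcal{F}_d=\left[V\atop d\right]\setminus\{F:U\le F\}$, ruling out the possibility of gaining back cardinality through a more clever multi-level configuration — this requires carefully combining the strict-inequality clauses of Lemma \ref{shade}(ii) at each pushing step with the fact that the shade of a non-singleton strictly overshoots, and noting the small-case $d=1$ is exactly the $\mathcal{B}[n,2]$ computation already done in the introduction (so one should phrase (ii) for $d\ge2$, matching Theorem \ref{antichain2}(ii)).
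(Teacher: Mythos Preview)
Your overall strategy (iteratively replacing the bottom layer by its shade and invoking Lemma~\ref{shade}(ii)) is exactly the paper's approach, and part~(i) is essentially fine. However, part~(ii) contains a genuine error that breaks the argument.

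You write ``assume $\mathcal{F}\nsubseteq\left[V\atop d\right]$, so $l\le d-1$.'' This deduction is false. Under the standing hypothesis $l\le d$, the condition $\mathcal{F}\nsubseteq\left[V\atop d\right]$ only says that some member of $\mathcal{F}$ has dimension strictly less than $d$, i.e.\ $m<d$; it says nothing about $l$. In particular the case $m<l=d$ is allowed, and this is precisely where the extremal family lives: the claimed equality configuration $(\left[V\atop d\right]\setminus\{F:U\le F\})\cup\{U\}$ has $l=d$ and $m=d-1$. Your argument, which pushes everything into a single level $l'\le d-1$ and then takes one more shade, never touches this case. The subsequent equality analysis (``$\bigtriangledown(\mathcal{G})=\left[V\atop d\right]$ --- impossible \dots\ So the equality must instead come from \dots'') is accordingly incoherent: you derive a contradiction from your own (flawed) setup and then assert the correct extremal family without a supporting argument.

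The fix is what the paper does: push the bottom layer up repeatedly until the family sits inside $\left[V\atop l\right]$ (not $\left[V\atop d-1\right]$), record that each of the $l-m$ steps contributes at least $q\left[n-l\atop 1\right]$ to the cardinality by Lemma~\ref{shade}(ii), and then use $|\mathcal{G}_{l-m}|\le\left[n\atop l\right]\le\left[n\atop d\right]$. This gives
\[
|\mathcal{F}|\le\left[n\atop d\right]-\sum_{i=m+1}^{l} q\left[n-i\atop 1\right]\le\left[n\atop d\right]-q\left[n-l\atop 1\right]\le\left[n\atop d\right]-q\left[n-d\atop 1\right],
\]
and equality forces $l-m=1$, $l=d$, $\mathcal{G}_1=\left[V\atop d\right]$, and (via the equality clause of Lemma~\ref{shade}(ii)) $\mathcal{F}_m=\{U\}$ with $\dim U=d-1$, which is exactly the stated family. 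A minor additional point: in part~(i) you say a shade step ``preserved cardinality only if the layer being pushed was a singleton,'' but in fact Lemma~\ref{shade}(ii) gives a \emph{strict} increase of at least $q\left[n-k-1\atop 1\right]>0$ in every case, singleton or not; your conclusion there is still correct, but for the wrong reason.
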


\begin{proof}~
 $(1)$~Suppose $m=l<d.$ Then $|\mathcal{F}|\leq \left[n\atop l\right]\leq \left[n\atop d-1\right]$. If $d=1$, $|\mathcal{F}|=1=\left[n\atop 1\right]-q\left[n-1\atop 1\right]$. If $d\geq2$, we have
 \begin{equation*}
 \begin{array}{rl}
 \left[n\atop d\right]-q\left[n-d\atop 1\right]-\left[n\atop d-1\right]&=\frac{q^{n-d+1}-q^{d}}{q^{d}-1}\left[n\atop d-1\right]-q\left[n-d\atop 1\right]\\[.3cm]
 &>\left[n\atop d-1\right]-q\left[n-d\atop 1\right]\\[.3cm]
 &>\frac{q^{n}-1}{q^{d-1}-1}-q^{n-d+1}\\[.3cm]
 &>0.\\[.3cm]
 \end{array}
\end{equation*}
Hence,

\begin{equation*}
|\mathcal{F}|\leq \left[n\atop d-1\right]<\left[n\atop d\right]-q\left[n-d\atop 1\right].
\end{equation*}

$(2)$~Suppose $m=l=d.$~Then $|\mathcal{F}|\leq \left[n\atop d\right]$, and equality holds if and only if $\mathcal{F}=\left[V\atop d\right]$.

$(3)$~Suppose $m< l\leq d.$ Let $\mathcal{H}=\mathcal{F}_{m}$,~$\mathcal{G}_{1}=(\mathcal{F}\setminus \mathcal{H})\cup \bigtriangledown(\mathcal{H})$. It is clear that $\mathcal{G}_{1}$ is also an $s$-union antichain in this case and $(\mathcal{F}\setminus \mathcal{H})\cap \bigtriangledown(\mathcal{H})=\emptyset$. Then by Lemma \ref{shade}, we have
\begin{equation}\label{eq15}
|\mathcal{G}_{1}|=|\mathcal{F}|-|\mathcal{H}|+|\bigtriangledown(\mathcal{H})|\geq |\mathcal{F}|+q\left[n-m-1\atop 1\right].
\end{equation}

If $m(\mathcal{G}_{1})=m+1< l$, then let $\mathcal{H}_{1}=\mathcal{G}_{1}\cap \left[V\atop m(\mathcal{G}_{1})\right]$ and $\mathcal{G}_{2}=(\mathcal{G}_{1}\setminus \mathcal{H}_{1})\cup \bigtriangledown(\mathcal{H}_{1})$. After this we obtain an $s$-union antichain $\mathcal{G}_{2}$ for which by Lemma \ref{shade}

\begin{equation*}
\begin{array}{rl}
|\mathcal{G}_{2}|&=|\mathcal{G}_{1}|-|\mathcal{H}_{1}|+|\bigtriangledown(\mathcal{H}_{1})|\\[.3cm]
&\geq |\mathcal{G}_{1}|+q\left[n-m(\mathcal{G}_{1})-1\atop 1\right]\\[.3cm]
&\geq |\mathcal{F}|+q\left[n-m-1\atop 1\right]+q\left[n-m-2\atop 1\right].\\[.3cm]
\end{array}
\end{equation*}

Repeat doing like this until we raise the minimum dimension of the spaces in $\mathcal{F}$ to $l$, and we obtain an $s$-union antichain $\mathcal{G}_{l-m}$ satisfying

\begin{equation*}
|\mathcal{G}_{l-m}|\geq|\mathcal{F}|+q\left[n-m-1\atop 1\right]+q\left[n-m-2\atop 1\right]+\cdots+q\left[n-l\atop 1\right].
\end{equation*}
Since $\mathcal{G}_{l-m}\subseteq \left[V\atop l\right]$,~$l\leq d$, then $|\mathcal{G}_{l-m}|\leq \left[n\atop d\right]$. Since $q\left[n-l\atop 1\right]\leq q\left[n-i\atop 1\right]$ for $m+1\leq i\leq l$ and $m<l\leq d$, we have
\begin{equation}\label{eq18}
\begin{array}{rl}
|\mathcal{F}|&\leq\left[n\atop d\right]-q\left[n-m-1\atop 1\right]-q\left[n-m-2\atop 1\right]-\cdots-q\left[n-l\atop 1\right]\\[.3cm]
&\leq\left[n\atop d\right]-q\left[n-m-1\atop 1\right]\\[.3cm]
&\leq\left[n\atop d\right]-q\left[n-l\atop 1\right]\\[.3cm]
&\leq\left[n\atop d\right]-q\left[n-d\atop 1\right].
\end{array}
\end{equation}
Moreover, equality holds if and only if $m+1= l=d$,~$\mathcal{G}_{1}=\left[V\atop d\right]$~and the equality in \eqref{eq15} holds. That is $\mathcal{H}=\{U\}$, where $U$ is a fixed $(d-1)$-dimensional subspace of $V$  by Lemma \ref{shade}. Thus

\begin{equation*}
\mathcal{F}=(\mathcal{G}_{1}\setminus \bigtriangledown(\mathcal{H}))\bigcup \mathcal{H}=(\left[V\atop d\right]\setminus\{F\in \left[V\atop d\right]:U\leq  F\})\bigcup \{U\}.
\end{equation*}

Combining (1)-(3) yields part (i) of the lemma. Note that if $\mathcal{F}\nsubseteq \left[V\atop d\right]$ then $m=l<d$ or $m< l\le d$ and the arguments in (1) and (3) apply. This proves part (ii).
\end{proof}

\begin{lem}\label{4.2}
If~$l\geq d+1$, then the following hold.
\begin{itemize}
\item[\rm(i)]~If $m<d<l$, then $|\mathcal{F}|\leq\left[n\atop d\right]-q\left[n-d\atop 1\right]$ and equality holds only if $s$ is odd.

\item[\rm(ii)]~If $m=d<l$, then
\begin{equation*}
|\mathcal{F}|\!\leq\!
\begin{cases}
\left[n\atop d\right]-q^{d(d+1)}\left[n-d-1\atop d\right]+1, &\text{if~$s=2d<n$},\\[.3cm]
\left[n\atop d\right], &\text{if~$s=2d+1\leq n$}.
\end{cases}\
\end{equation*}
\item[\rm(iii)]~If $d<m\leq l$, then
\begin{equation*}
|\mathcal{F}|\!\leq\!
\begin{cases}
\left[s\atop d+1\right], &\text{if~$n\leq 2m$},\\[.3cm]
\left[n+s-2d-2\atop s-d-1\right], &\text{if~$n> 2m$}.
\end{cases}\
\end{equation*}
\end{itemize}

\end{lem}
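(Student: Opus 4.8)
The plan is to push the extreme layers of $\mathcal{F}$ inward—shadows from the top, shades from the bottom—in such a way that each step keeps the family an $s$-union antichain and never decreases its size, until $\mathcal{F}$ is concentrated in one layer near dimension $d$ (or in the two adjacent layers $d,d+1$), and then to apply Theorem \ref{EKR}, Lemma \ref{cross-}, or Theorem \ref{shadow intersecting}. Two observations underlie the replacements. If $\mathcal{H}$ is the top layer of an $s$-union antichain $\mathcal{G}$, then $(\mathcal{G}\setminus\mathcal{H})\cup\bigtriangleup(\mathcal{H})$ is again an $s$-union antichain, since a shadow only shrinks spans and a new $(\dim\mathcal{H}-1)$-space lying under two members of $\mathcal{G}$ would create a chain. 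If $\mathcal{H}$ is the \emph{bottom} layer of $\mathcal{G}$ and every member of $\mathcal{G}$ has dimension at most $d$, then $(\mathcal{G}\setminus\mathcal{H})\cup\bigtriangledown(\mathcal{H})$ is again an $s$-union antichain; the hypothesis ``dimension $\leq d$'' is indispensable, because a shade can enlarge a span by $2$. For the bookkeeping of sizes: a downward step at a layer of dimension $\geq\lceil n/2\rceil+1$ loses nothing by Lemma \ref{shade}(i); a downward step at a $t$-intersecting layer ($t\geq1$) loses nothing by Theorem \ref{shadow intersecting} (strictly, if $t\geq2$); an upward step at a layer $\mathcal{H}$ gains at least $q\left[n-\dim\mathcal{H}-1\atop 1\right]$ by Lemma \ref{shade}(ii). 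Recall $l=l(\mathcal{F})$, $m=m(\mathcal{F})$, $\mathcal{F}_i=\mathcal{F}\cap\left[V\atop i\right]$.

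Case (iii), $d<m\leq l$. Every $\mathcal{F}_k$ with $k\geq d+1$ is automatically $(2k-s)$-intersecting, and $2k-s\geq1$. If $n\leq 2m$, push the top down repeatedly until all of $\mathcal{F}$ sits in $\left[V\atop m\right]$: each pushed layer has dimension $\geq m+1\geq\lceil n/2\rceil+1$, so by Lemma \ref{shade}(i) the size never drops (this also covers $s=n$, where there is no intersecting structure). The resulting single layer is $(2m-s)$-intersecting, and $s<n\leq 2m$ (or, for $s=n$, it is merely a subfamily of $\left[V\atop m\right]$), so Theorem \ref{EKR} gives $|\mathcal{F}|\leq\left[s\atop m\right]\leq\left[s\atop d+1\right]$, the last inequality because $d+1\geq\lceil s/2\rceil$ and $\left[s\atop\cdot\right]$ is non-increasing past its peak. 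If $n>2m$—which forces $s<n$—push the top all the way down to dimension $d+1$: every pushed layer has dimension $\geq d+2$, hence is $(2k-s)$-intersecting with $2k-s\geq3$, so Theorem \ref{shadow intersecting} keeps the size from dropping. The final single layer at dimension $d+1$ is $t$-intersecting with $t=2(d+1)-s\geq1$, and $n>2m\geq 2(d+1)$, so Theorem \ref{EKR} gives $|\mathcal{F}|\leq\left[n-t\atop d+1-t\right]=\left[n+s-2d-2\atop s-d-1\right]$.

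Cases (i) and (ii), $m\leq d<l$. First push the top down to dimension $d$ (in case (ii) with $s=2d$, stop one step earlier, at dimension $d+1$): each pushed layer lies in $[d+1,l]$ and is $(2k-s)$-intersecting with $2k-s\geq1$, so Theorem \ref{shadow intersecting} keeps the size from dropping, and the step from dimension $d+1$ to $d$, whenever taken, is \emph{strict} exactly when $2(d+1)-s\geq2$, i.e.\ when $s=2d$. For case (ii) with $s=2d+1$ (or $s=n$) the family now lies inside $\left[V\atop d\right]$, so $|\mathcal{F}|\leq\left[n\atop d\right]$. For case (ii) with $s=2d$, the surviving $\mathcal{F}_d$ and the pushed-down $\mathcal{F}_{d+1}'$ are cross-intersecting with $\mathcal{F}_{d+1}'$ $2$-intersecting, and $n\geq 2d+1$, so Lemma \ref{cross-} with $k=d$ yields $|\mathcal{F}|\leq\left[n\atop d\right]-q^{d(d+1)}\left[n-d-1\atop d\right]+1$. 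For case (i), once the top is at dimension $d$ every subspace has dimension $\leq d$, so we may now push the bottom up step by step to dimension $d$: the $s$-union property survives because all spans now have dimension $\leq 2d\leq s$. The bottom layer has dimension $m$ with $1\leq m\leq d-1\leq\lfloor n/2\rfloor-1$ (note $m\neq0$, else $\mathcal{F}=\{0\}$ and $l=0$; in particular $d\geq2$), so by Lemma \ref{shade}(ii) the first upward step increases $|\mathcal{F}|$ by at least $q\left[n-m-1\atop 1\right]\geq q\left[n-d\atop 1\right]$, while the later steps do not decrease it; the final family lies in $\left[V\atop d\right]$, so $|\mathcal{F}|+q\left[n-d\atop 1\right]\leq\left[n\atop d\right]$. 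If $s=2d$, the strict downward step noted above forces $|\mathcal{F}|<\left[n\atop d\right]-q\left[n-d\atop 1\right]$, so equality requires $s$ odd.

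The main obstacle is the asymmetry between shadows and shades. A shadow can only decrease the dimension of a span, so downward replacement is always legitimate; a shade may raise it by $2$, so upward replacement breaks $s$-union unless the family already lies in dimensions $\leq d$. This dictates the order of operations—top down first, then bottom up—and is exactly why the case $m<d<l$ cannot be treated symmetrically to $m>d$. A secondary point is that Theorem \ref{k-1-shadow} alone does not guarantee that a downward step through a ``middle'' dimension (strictly between $d+1$ and $\lceil n/2\rceil$) is size-non-decreasing; one sidesteps this by using that every layer of dimension exceeding $d$ of an $s$-union family is automatically $(2k-s)$-intersecting and appealing to Theorem \ref{shadow intersecting}. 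Keeping track of which shadow/shade estimate applies in each dimension range, and the few $s=n$ sub-cases (where Theorem \ref{EKR}'s hypothesis $s<n$ is replaced by Lemma \ref{shade}(i) together with the trivial ``subfamily of a layer'' bound), is then routine.
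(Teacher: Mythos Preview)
Your proof is correct and follows essentially the same strategy as the paper: push the top layer down via shadows (using Theorem~\ref{shadow intersecting}) and, in case~(i), push the bottom up via shades (using Lemma~\ref{shade}(ii) and the argument of Lemma~\ref{4.1}(3)), then apply Theorem~\ref{EKR} or Lemma~\ref{cross-} to the concentrated family. The minor variations---invoking Lemma~\ref{shade}(i) rather than Theorem~\ref{shadow intersecting} for the downward steps in (iii) when $n\leq 2m$, and pushing all the way to level $d+1$ rather than stopping at $m$ when $n>2m$---are equally valid; one small remark: even for $s=n$ a layer at dimension $k\geq d+1$ is automatically $(2k-n)$-intersecting by the dimension formula, so Theorem~\ref{shadow intersecting} would still apply there and your comment about ``no intersecting structure'' is not quite accurate, though your alternative route via Lemma~\ref{shade}(i) is fine.
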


\begin{proof}~
Let $\mathcal{D}=\mathcal{F}_{l}$, $\mathcal{F}^{1}=(\mathcal{F}\setminus \mathcal{D})\cup \bigtriangleup(\mathcal{D})$. It is obvious that $\mathcal{F}^{1}$ is also an $s$-union antichain and $(\mathcal{F}\setminus \mathcal{D})\cap \bigtriangleup(\mathcal{D})=\emptyset$. Since $\mathcal{F}$ is an $s$-union antichain and $l\geq d+1$, then for any $D, D'\in \mathcal{D}$, we  have
\begin{equation*}
{\rm dim}(D\cap D')=2l-{\rm dim}(D+D')\geq 2l-s\geq1.
\end{equation*}
By Theorem~\ref{shadow intersecting}, we have $|\bigtriangleup(\mathcal{D})|\geq|\mathcal{D}|$. Then

\begin{equation*}
|\mathcal{F}^{1}|=|\mathcal{F}|-|\mathcal{D}|+|\bigtriangleup(\mathcal{D})|\geq|\mathcal{F}|.
\end{equation*}
Note that the inequality is strict if either $s=2d$ or $s=2d+1$ and $l\geq d+2$.
If $l(\mathcal{F}^{1})=l-1\geq\max\{d+1,~m+1\}$, then let $\mathcal{D}_{1}=\mathcal{F}^{1}\cap \left[V\atop l(\mathcal{F}^{1})\right]$, $\mathcal{F}^{2}=(\mathcal{F}^{1}\setminus \mathcal{D}_{1})\cup \bigtriangleup(\mathcal{D}_{1})$. After this we obtain an $s$-union antichain $\mathcal{F}^{2}$ for which by Theorem \ref{shadow intersecting}

\begin{equation*}
|\mathcal{F}^{2}|=|\mathcal{F}^{1}|-|\mathcal{D}_{1}|+|\bigtriangleup(\mathcal{D}_{1})|\geq |\mathcal{F}^{1}|\geq|\mathcal{F}|.
\end{equation*}


${\rm(i)}$~Suppose $m<d<l$. Repeat the above process until we decrease the maximum dimension of the spaces in $\mathcal{F}$ to $d$.

If $s=2d$, we obtain a $2d$-union antichain $\mathcal{F}^{l-d}$ satisfying
\begin{equation*}
|\mathcal{F}^{l-d}|> |\mathcal{F}^{l-d-1}|> \cdots >|\mathcal{F}^{1}|>|\mathcal{F}|.
\end{equation*}
Since $m=m(\mathcal{F}^{l-d})<l(\mathcal{F}^{l-d})=d<l$, then replacing $\mathcal{F}$ with $\mathcal{F}^{l-d}$ in (3) of the proof of Lemma \ref{4.1} and applying  \eqref{eq18}, we have
\begin{equation*}
|\mathcal{F}|<|\mathcal{F}^{l-d}|\leq \left[n\atop d\right]-q\left[n-d\atop 1\right].
\end{equation*}

If $s=2d+1$, we obtain a $(2d+1)$-union antichain $\mathcal{F}^{l-d}$ satisfying

\begin{equation*}
|\mathcal{F}^{l-d}|\geq|\mathcal{F}^{l-d-1}|> \cdots >|\mathcal{F}^{1}|>|\mathcal{F}|.
\end{equation*}
Here note that if $l=d+1$, then we have $|\mathcal{F}^{l-d}|=|\mathcal{F}^{1}|\geq|\mathcal{F}|$.
Similarly by \eqref{eq18}, we have
\begin{equation*}
|\mathcal{F}|\leq|\mathcal{F}^{l-d}|\leq \left[n\atop d\right]-q\left[n-d\atop 1\right].
\end{equation*}

${\rm(ii)}$~Suppose $m=d<l$. Similarly as above but we decrease the maximum dimension of the spaces in $\mathcal{F}$ to $d+1$ if $l>d+1$. Then we obtain an $s$-union antichain $\mathcal{F}^{l-d-1}$ satisfying

\begin{equation*}
|\mathcal{F}^{l-d-1}|> |\mathcal{F}^{l-d-2}|> \cdots >|\mathcal{F}^{1}|>|\mathcal{F}|.
\end{equation*}
If $l=d+1$, then let $\mathcal{F}^{l-d-1}=\mathcal{F}$.
It is clear that $m(\mathcal{F}^{l-d-1})=d,~l(\mathcal{F}^{l-d-1})=d+1$.~Let $\mathcal{H}=\mathcal{F}^{l-d-1}\cap \left[V\atop d\right]$,~$\mathcal{G}=\mathcal{F}^{l-d-1}\cap \left[V\atop d+1\right]$.

If $s=2d$, then by the $2d$-union property of $\mathcal{F}^{l-d-1}$,
we have that $\mathcal{H}$ and $\mathcal{G}$ are cross-intersecting families and $\mathcal{G}$ is a $2$-intersecting family. Whenever $n\geq 2d+1$, by Lemma~\ref{cross-}, we have
\begin{equation*}
|\mathcal{F}|\leq|\mathcal{F}^{l-d-1}|=|\mathcal{H}|+|\mathcal{G}|\leq \left[n\atop d\right]-q^{d(d+1)}\left[n-d-1\atop d\right]+1.
\end{equation*}

If $s=2d+1$, then by the $(2d+1)$-union property of $\mathcal{F}^{l-d-1}$, we have that $\mathcal{G}$ is an intersecting family. By Theorem \ref{shadow intersecting}, we have
\begin{equation}\label{80}
|\mathcal{F}|\leq|\mathcal{F}^{l-d-1}|=|\mathcal{H}|+|\mathcal{G}|\leq\left[n\atop d\right]-|\bigtriangleup(\mathcal{G})|+|\mathcal{G}|\leq \left[n\atop d\right].
\end{equation}

${\rm(iii)}$~Suppose $m>d$. As before but we decrease the maximum dimension of the spaces in $\mathcal{F}$ to $m$ if $l>m$. Now we obtain an $s$-union antichain $\mathcal{F}^{l-m}\subseteq \left[V\atop m\right]$ satisfying

\begin{equation*}
|\mathcal{F}^{l-m}|> |\mathcal{F}^{l-m-1}|> \cdots >|\mathcal{F}^{1}|>|\mathcal{F}|.
\end{equation*}
If $l=m$, then let $\mathcal{F}^{l-m}=\mathcal{F}$.
By the $s$-union property of $\mathcal{F}^{l-m}$, for any $F, F'\in \mathcal{F}^{l-m}$, we  have ${\rm dim}(F\cap F')=2m-{\rm dim}(F+F')\geq 2m-s.$

If $n\leq 2m$, then by Theorem~\ref{EKR} and noting $m>d$,  we have

\begin{equation*}
|\mathcal{F}^{l-m}|\leq \left[s\atop m\right]\leq \left[s\atop d+1\right].
\end{equation*}

If $n>2m$, then by Theorem~\ref{EKR} and noting $s\geq l\geq m>d$,  we have

\begin{equation*}
|\mathcal{F}^{l-m}|\leq \left[n+s-2m\atop s-m\right]\leq \left[n+s-2d-2\atop s-d-1\right].
\end{equation*}
\end{proof}

\noindent\emph{{\textbf{Proof of Theorem\;{\rm\ref{2d+1}.}}}}~
Obviously, we have the two assertions in Lemma \ref{4.1} for the case $l\leq d$. Next, we give new upper bounds of $|\mathcal{F}|$ for the case $l>d$ by similar approach of Lemma \ref{4.2}. Now $d=\lfloor\frac{n}{2}\rfloor$.

{\rm(1)}~Suppose $m<d<l$. By Lemma \ref{4.2} ${\rm(i)}$, if $n=2d$, then $|\mathcal{F}|<\left[n\atop d\right]-q\left[d\atop 1\right]$; if $n=2d+1$, $|\mathcal{F}|\leq\left[n\atop d\right]-q\left[n-d\atop 1\right]<\left[n\atop d\right]-q\left[d\atop 1\right].$

{\rm(2)}~Suppose $m=d<l$. Similarly as the proof of Lemma~\ref{4.2}~{\rm(ii)},
we obtain an antichain $\mathcal{F}^{l-d-1}$ satisfying $|\mathcal{F}|\leq|\mathcal{F}^{l-d-1}|,$
$m(\mathcal{F}^{l-d-1})=d$ and $l(\mathcal{F}^{l-d-1})=d+1$.~Let ~$\mathcal{G}=\mathcal{F}^{l-d-1}\cap \left[V\atop d+1\right]$.

Case\ $a$$:$ If $n=2d$, let $\mathcal{M}=(\mathcal{F}^{l-d-1}\setminus \mathcal{G})\cup \bigtriangleup(\mathcal{G})$. Then we obtain an antichain $\mathcal{M}$ satisfying~

\begin{equation*}
|\mathcal{M}|\leq \left[n\atop d\right].
\end{equation*}
By Lemma~\ref{shade}~${\rm(i)}$, we have

\begin{equation*}
|\bigtriangleup(\mathcal{G})|-|\mathcal{G}|\geq q\left[d\atop 1\right].
\end{equation*}
Hence,
\begin{equation*}
|\mathcal{F}|\leq|\mathcal{F}^{l-d-1}|= |\mathcal{M}|+|\mathcal{G}|-|\bigtriangleup(\mathcal{G})|\leq\left[n\atop d\right]-q\left[d\atop 1\right].
\end{equation*}
Moreover, the equality holds if and only if~$l=d+1,~\mathcal{M}=\left[V\atop d\right]$,
~$\mathcal{G}=\{W\}$, where $W$ is a fixed $(d+1)$-dimensional subspace of $V$, that is
\begin{equation*}
\mathcal{F}=(\mathcal{M}\setminus \bigtriangleup(\{W\})\bigcup \{W\}=\mathcal{B}[n,n].
\end{equation*}

Case\ $b$$:$ If $n=2d+1$, let $\mathcal{H}=\mathcal{F}^{l-d-1}\cap \left[V\atop d\right]$. Since $\mathcal{F}^{l-d-1}$ is an antichain, we have that $\mathcal{H}, \mathcal{G}$ are cross-Sperner. Then by Theorem \ref{cross-sperner},
\begin{equation*}
|\mathcal{F}|\leq |\mathcal{F}^{l-d-1}|=|\mathcal{H}|+|\mathcal{G}|\leq\left[n\atop d\right]-q\left[d\atop 1\right].
\end{equation*}
Moreover, equality holds if and only if  $l=d+1$ and either $\mathcal{F}=\mathcal{A}[n,n]$ or $\mathcal{F}=\mathcal{B}[n,n]$.

{\rm(3)}~Suppose $d<m\leq l$.

Case\ $a$$:$ Let $n=2d$. Now we have $n<2m$. Then by Lemma~\ref{4.2}~{\rm{(iii)}}, $|\mathcal{F}|\leq \left[2d\atop d+1\right]<\left[2d\atop d\right]-q\left[d\atop 1\right].$

Case\ $b$$:$ Let $n=2d+1$.

 If $d<m=l$, then $|\mathcal{F}|\leq \left[2d+1\atop m\right]\leq \left[2d+1\atop d+1\right].$ Moreover, equality holds if and only if $\mathcal{F}=\left[V\atop d+1\right]$. Further, if $\mathcal{F}\nsubseteq\left[V\atop d+1\right]$, then $|\mathcal{F}|\leq \left[2d+1\atop d+2\right]<\left[2d+1\atop d\right]-q\left[d\atop 1\right].$

If $d<m<l$, similarly as the proof of Lemma~\ref{4.2}~{\rm(iii)} but we decrease the maximum dimension of the spaces in $\mathcal{F}$ to $m+1$ if $l>m+1$, then we obtain an antichain $\mathcal{F}^{l-m-1}$ satisfying

\begin{equation*}
|\mathcal{F}^{l-m-1}|> \cdots >|\mathcal{F}^{1}|>|\mathcal{F}|.
\end{equation*}
If $l=m+1$, then let $\mathcal{F}^{l-m-1}=\mathcal{F}$. ~Let ~$\mathcal{G}=\mathcal{F}^{l-m-1}\cap \left[V\atop m+1\right]$ and $\mathcal{N}=(\mathcal{F}^{l-m-1}\setminus \mathcal{G})\cup \bigtriangleup(\mathcal{G})$. By Lemma~\ref{shade}~${\rm(i)}$, we have

\begin{equation*}
|\bigtriangleup(\mathcal{G})|-|\mathcal{G}|\geq q\left[m\atop 1\right].
\end{equation*}
Hence,
\begin{equation*}
|\mathcal{F}|\leq|\mathcal{F}^{l-m-1}|= |\mathcal{N}|+|\mathcal{G}|-|\bigtriangleup(\mathcal{G})|\leq\left[2d+1\atop m\right]-q\left[m\atop 1\right]<\left[2d+1\atop d\right]-q\left[d\atop 1\right].
\end{equation*}

To sum up, an optimal antichain satisfies $|\mathcal{F}|\leq \left[n\atop \lfloor\frac{n}{2}\rfloor\right]$ and equality occurs in  {\rm(3)} Case\ $b$ or Lemma~\ref{4.1}~${\rm(i)}$; a suboptimal antichain has $|\mathcal{F}|\leq \left[n\atop \lfloor\frac{n}{2}\rfloor\right]-q\left[\lfloor\frac{n}{2}\rfloor\atop 1\right]$ and equality occurs in  {\rm(2)} or Lemma~\ref{4.1}~${\rm(ii)}$ if $n=2d$. This completes the proof.
\qed

\noindent\emph{{\textbf{Proof of Theorem\;{\rm\ref{antichain2}}.}}}~
We divide the proof into two parts, according to the singularity of $s$.

$(1)$~Suppose $s=2d$. The case of $d=1$ is trivial and has been explained in Section ${\rm 1}$. Hence, we only need to consider $d\geq 2$ in the following.
By Lemmas \ref{4.1} and  \ref{4.2}, it suffices to show that the upper bounds provided in ${\rm(ii)}$ and ${\rm(iii)}$ of Lemma \ref{4.2} are strictly smaller than $\left[n\atop d\right]-q\left[n-d\atop 1\right]$.

Case\ $a$$:$~It is readily checked that
\begin{equation*}
\begin{array}{rl}
q^{d(d+1)}\left[n-d-1\atop d\right]-1-q\left[n-d\atop 1\right]
&=q^{d(d+1)}\left[n-d-1\atop d\right]-\left[n-d+1\atop 1\right]\\[.3cm]
&\geq q^{d(d+1)}\left[n-d-1\atop d\right]-q^{n-d+1}\\[.3cm]
&\geq\frac {q^{d(d+1)}(q^{n-d-1}-1)}{q^{d}-1}
-q^{n-d+1}\\[.3cm]
&\geq\frac {q^{2d+2}(q^{n-d-1}-1)}{q^{d}-1}
-q^{n-d+1}\\[.3cm]
&> q^{d+2}(q^{n-d-1}-1)-q^{n-d+1}\\[.3cm]
&>0.\\[.3cm]
\end{array}
\end{equation*}
Hence,

\begin{equation*}
\left[n\atop d\right]-q^{d(d+1)}\left[n-d-1\atop d\right]+1<\left[n\atop d\right]-q\left[n-d\atop 1\right].
\end{equation*}

Case\ $b$$:$~Noting $n>s=2d, \left[n\atop d-1\right]>q^{n-d+1}>q\left[n-d\atop 1\right]$, we have
\begin{equation*}
\begin{array}{rl}
\left[n\atop d\right]-q\left[n-d\atop 1\right]-\left[2d\atop d+1\right]
&=\frac{q^{n-d+1}-1}{q^{d}-1}\left[n\atop d-1\right]-\left[2d\atop d-1\right]-q\left[n-d\atop 1\right]\\[.3cm]
&>\frac{q^{n-d+1}-q^{d}}{q^{d}-1}\left[n\atop d-1\right]-q\left[n-d\atop 1\right]\\[.3cm]
&>0.\\[.3cm]
\end{array}
\end{equation*}
Hence,
\begin{equation*}
\left[2d\atop d+1\right] <\left[n\atop d\right]-q\left[n-d\atop 1\right].
\end{equation*}

Case\ $c$$:$~Since $\left[a\atop k\right]=q^{a-k}\left[a-1\atop k-1\right]+\left[a-1\atop k\right]$ for $a\geq k+1$ and note $d\geq 2$, we have

\begin{equation*}
\begin{array}{rl}
\left[n\atop d\right]-q\left[n-d\atop 1\right]-\left[n-2\atop d-1\right]
&=q^{n-d}\left[n-1\atop d-1\right]+\left[n-1\atop d\right]-q\left[n-d\atop 1\right]-\left[n-2\atop d-1\right]\\[.3cm]
&=q^{n-d}\left[n-1\atop d-1\right]+q^{n-d-1}\left[n-2\atop d-1\right]+\left[n-2\atop d\right]-q\left[n-d\atop 1\right]-\left[n-2\atop d-1\right]\\[.3cm]
&>q^{n-d}\left[n-1\atop d-1\right]+\left[n-2\atop d\right]-q\left[n-d\atop 1\right]\\[.3cm]
&>0.\\[.3cm]
\end{array}
\end{equation*}
Hence,
\begin{equation*}
\left[n-2\atop d-1\right] <\left[n\atop d\right]-q\left[n-d\atop 1\right].
\end{equation*}

$(2)$~Suppose $s=2d+1$. It is clear that the upper bounds provided in Lemma \ref{4.2}~{\rm{(i)}} and~{\rm{(iii)}} are strictly smaller than $\left[n\atop d\right]$; and the equality in Lemma \ref{4.2}~{\rm{(ii)}} holds if and only if \eqref{80} holds, that is $\mathcal{F}=\mathcal{H}\bigcup \mathcal{G},$ where $\mathcal{G}\subseteq\left[V\atop d+1\right], \mathcal{H}=\left[V\atop d\right]\setminus \bigtriangleup(\mathcal{G})$ and $ |\bigtriangleup(\mathcal{G})|=|\mathcal{G}|$. Then by Lemma \ref{4.1}, we complete the proof.
\qed
\section{ Concluding remarks }

In the present paper, we determine all suboptimal $s$-union families for vector spaces.
For $s=n$ or $s=2d<n$, we determine all optimal and suboptimal $s$-union antichains completely. For $s=2d+1<n$, we prove that an optimal $s$-union antichain is either $\left[V\atop d\right]$ or
$\mathcal{F}=\mathcal{F}_{d}\bigcup \mathcal{F}_{d+1}$, where $\mathcal{F}_{d+1}\subseteq \left[V\atop d+1\right], \mathcal{F}_{d}=\left[V\atop d\right]\setminus \bigtriangleup(\mathcal{F}_{d+1})$ and
$|\bigtriangleup(\mathcal{F}_{d+1})|=|\mathcal{F}_{d+1}|$. It is very interesting to display all optimal $(2d+1)$-union antichains of the latter type. Obviously,
${\cal F}=(\left[V\atop d\right]\setminus\left[S\atop d\right])\bigcup\left[S\atop d+1\right]$ satisfies the above condition, where $S$ is a fixed $(2d+1)$-dimensional subspace of $V$.
Consulting the situation of $s$-union antichains in an $n$-element set, we conjecture that this is the unique desired structure (of the latter type).

Let $\mathcal{F}\subseteq \mathcal{L}(V)$ be a $(2d+1)$-union antichain with $\mathcal{F}$ not contained in any optimal $(2d+1)$-union antichain. By Lemma \ref{4.1}, if $l\leq d$, we have
\begin{equation*}
|\mathcal{F}|\leq \left[n\atop d\right]-q\left[n-d\atop 1\right]<\left[n\atop d\right]-q\left[d\atop 1\right].
\end{equation*}
It is obvious that the upper bounds provided in Lemma \ref{4.2}~{\rm{(i)}} and~{\rm{(iii)}} are strictly smaller than $\left[n\atop d\right]-q\left[d\atop 1\right]$. From the proof of  Lemma \ref{4.2}~{\rm{(ii)}},
we know that a suboptimal $(2d+1)$-union antichain has the form $\mathcal{F}=\mathcal{F}_{d}\bigcup \mathcal{F}_{d+1}$, where $\mathcal{F}_{d+1}\subseteq \left[V\atop d+1\right],
\mathcal{F}_{d}=\left[V\atop d\right]\setminus \bigtriangleup(\mathcal{F}_{d+1})$ and $|\bigtriangleup(\mathcal{F}_{d+1})|>|\mathcal{F}_{d+1}|$. In view of Theorem \ref{2d+1}, we make the following conjecture.






\begin{conj}\label{con}
Let $\mathcal{F}\subseteq \mathcal{L}(V)$ be a $(2d+1)$-union antichain with $\mathcal{F}$ not contained in any optimal $(2d+1)$-union antichain and $2d+1< n$. Then
\begin{equation*}
|\mathcal{F}|\leq \left[n\atop d\right]-q\left[d\atop 1\right].
\end{equation*}
 Moreover, equality holds if and only if $\mathcal{F}=\mathcal{B}[n,2d+1]$.
 \end{conj}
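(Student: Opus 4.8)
\medskip
\noindent\textbf{A proof strategy for Conjecture~\ref{con}.}
The plan is to reduce the conjecture to a purely shadow-theoretic estimate and then to split into two cases according to whether $\mathcal{F}_{d+1}$ has a nonzero common subspace. By the reduction carried out in the concluding remarks above, a $(2d+1)$-union antichain $\mathcal{F}$ that is not contained in any optimal one and has maximum size must have the form $\mathcal{F}=\mathcal{F}_d\cup\mathcal{G}$ with $\mathcal{G}:=\mathcal{F}_{d+1}\subseteq\left[V\atop d+1\right]$ intersecting, $\mathcal{F}_d=\left[V\atop d\right]\setminus\bigtriangleup(\mathcal{G})$ and $|\bigtriangleup(\mathcal{G})|>|\mathcal{G}|$; the hypothesis that $\mathcal{F}$ lies in no optimal antichain says precisely that $\mathcal{G}$ cannot be enlarged to an intersecting $\mathcal{G}^{*}\supseteq\mathcal{G}$ with $\bigtriangleup(\mathcal{G}^{*})=\bigtriangleup(\mathcal{G})$ and $|\mathcal{G}^{*}|=|\bigtriangleup(\mathcal{G})|$. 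Since $|\mathcal{F}|=\left[n\atop d\right]-\bigl(|\bigtriangleup(\mathcal{G})|-|\mathcal{G}|\bigr)$ and $q\left[d\atop1\right]=\left[d+1\atop1\right]-1$, the conjecture is equivalent to proving $|\bigtriangleup(\mathcal{G})|-|\mathcal{G}|\geq\left[d+1\atop1\right]-1$ for every such $\mathcal{G}$, with equality only when $\mathcal{G}=\{W\}$ for a single $W\in\left[V\atop d+1\right]$ (which is exactly $\mathcal{F}=\mathcal{B}[n,2d+1]$).

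The first case, $\dim\bigl(\bigcap_{G\in\mathcal{G}}G\bigr)\geq1$, is clean. Fix a line $E\leq\bigcap_{G\in\mathcal{G}}G$, put $\overline{V}=V/E$ and $\overline{\mathcal{G}}=\{G/E:G\in\mathcal{G}\}\subseteq\left[\overline{V}\atop d\right]$. Splitting $\bigtriangleup(\mathcal{G})$ into the $d$-subspaces that do and do not contain $E$, the former are in bijection (via $P\mapsto P/E$) with the $(d-1)$-shadow $\bigtriangleup(\overline{\mathcal{G}})$, while the latter number exactly $q^{d}|\mathcal{G}|$: each $G\in\mathcal{G}$ has $\left[d+1\atop d\right]-\left[d\atop d-1\right]=q^{d}$ many $d$-subspaces avoiding $E$, and a $d$-subspace $P$ with $E\nleq P$ lies in a unique member $P+E$ of $\left[V\atop d+1\right]$. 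Hence $|\bigtriangleup(\mathcal{G})|=|\bigtriangleup(\overline{\mathcal{G}})|+q^{d}|\mathcal{G}|$, so
\[
|\mathcal{F}|=\left[n\atop d\right]-|\bigtriangleup(\overline{\mathcal{G}})|-(q^{d}-1)|\mathcal{G}|\leq\left[n\atop d\right]-\left[d\atop1\right]-(q^{d}-1)=\left[n\atop d\right]-q\left[d\atop1\right],
\]
using $|\bigtriangleup(\overline{\mathcal{G}})|\geq\left[d\atop1\right]$ (the $(d-1)$-shadow of a single $d$-subspace) and $|\mathcal{G}|\geq1$; equality forces $|\mathcal{G}|=1$, i.e.\ $\mathcal{F}=\mathcal{B}[n,2d+1]$, and the inequality is strict when $|\mathcal{G}|\geq2$. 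This also settles the equality characterisation.

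The second case, $\dim\bigl(\bigcap_{G\in\mathcal{G}}G\bigr)=0$, is the main obstacle, and the reason the statement is only conjectured. Here Theorem~\ref{HM} bounds $|\mathcal{G}|$ from above, but one still needs a lower bound on $|\bigtriangleup(\mathcal{G})|$ exceeding $|\mathcal{G}|+q\left[d\atop1\right]$, and the paper's tools fall short: Theorem~\ref{shadow intersecting} with $u=d$ gives only $|\bigtriangleup(\mathcal{G})|\geq|\mathcal{G}|$, with no slack, and $\left[S\atop d+1\right]$ with $\dim S=2d+1$ (plus small perturbations of it) has $|\bigtriangleup(\mathcal{G})|$ only just above $|\mathcal{G}|$. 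The key point is that all such near-extremal $\mathcal{G}$ fall outside the scope of the conjecture, since for them $\mathcal{G}$ does extend to the $\left[S\atop d+1\right]$-form and the corresponding $\mathcal{F}$ sits inside the optimal antichain $\bigl(\left[V\atop d\right]\setminus\left[S\atop d\right]\bigr)\cup\left[S\atop d+1\right]$. What is therefore needed is a stability-type refinement --- a quantitative $q$-Kruskal--Katona theorem strengthening Theorem~\ref{k-1-shadow}, or a strengthening of Theorem~\ref{shadow intersecting} for trivial-intersection families --- yielding $|\bigtriangleup(\mathcal{G})|>|\mathcal{G}|+q\left[d\atop1\right]$ once $\mathcal{G}$ is an intersecting, trivial-intersection family that cannot be completed to an $\left[S\atop d+1\right]$. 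I would try to obtain this by a further split on $|\mathcal{G}|$: for small $|\mathcal{G}|$, apply inclusion--exclusion to a minimal subfamily with trivial intersection (which has only a bounded number of members) together with $\left[G_i\atop d\right]\cap\left[G_j\atop d\right]=\left[G_i\cap G_j\atop d\right]$ to estimate $|\bigtriangleup(\mathcal{G})|$ from below; for large $|\mathcal{G}|$, use stability of Hilton--Milner to place $\mathcal{G}$ near an extremal family of Theorem~\ref{HM} and compute its shadow directly, checking the gap there. The corresponding set-theoretic result for suboptimal $s$-union antichains in $2^{X}$ should guide both the extremal configuration and the estimates. Closing the large-$|\mathcal{G}|$ subcase cleanly is, in my view, the principal difficulty.
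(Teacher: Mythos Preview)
The statement is labelled a conjecture in the paper and is not proved there; the concluding remarks only carry out the reduction to the form $\mathcal{F}=\mathcal{F}_d\cup\mathcal{G}$ with $\mathcal{G}\subseteq\left[V\atop d+1\right]$ intersecting, $\mathcal{F}_d=\left[V\atop d\right]\setminus\bigtriangleup(\mathcal{G})$ and $|\bigtriangleup(\mathcal{G})|>|\mathcal{G}|$, and then state the conjecture. So there is no proof in the paper to compare against, and your proposal already goes further than the paper does.

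Your first case, where $\mathcal{G}$ has a common line $E$, is correct and is a genuine advance over what the paper establishes. The quotient identity $|\bigtriangleup(\mathcal{G})|=|\bigtriangleup(\overline{\mathcal{G}})|+q^{d}|\mathcal{G}|$ holds for the reasons you give (the map $P\mapsto P+E$ is a bijection from $d$-subspaces of $G$ avoiding $E$ onto $\{G\}$, so these are counted without overlap across $\mathcal{G}$), the arithmetic $\left[d\atop 1\right]+(q^{d}-1)=q\left[d\atop 1\right]$ is right, and the equality characterisation forcing $|\mathcal{G}|=1$ and hence $\mathcal{F}=\mathcal{B}[n,2d+1]$ is clean. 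This case is fully settled by your argument.

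For the second case your assessment is accurate rather than a gap you have overlooked: Theorem~\ref{shadow intersecting} has no slack at $u=k-1$, the near-extremal families $\left[S\atop d+1\right]$ with $\dim S=2d+1$ really do have $|\bigtriangleup(\mathcal{G})|=|\mathcal{G}|$, and they must be ruled out by the ``not contained in any optimal antichain'' hypothesis rather than by a shadow inequality. Your proposed split on $|\mathcal{G}|$ is plausible, but neither subcase is executed, and the large-$|\mathcal{G}|$ branch would need a stability form of the vector-space Hilton--Milner theorem that is not among the tools the paper provides. So the proposal is an honest and partially successful strategy, not a proof --- which is consistent with the statement's status as a conjecture.
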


\end{document}